\newcommand{\of}{\subseteq}
\newcommand{\wo}{\setminus} 
\newcommand{\union}{\bigcup}
\newcommand{\symd}{\bigtriangleup}
\newcommand{\0}{\emptyset}
\newcommand{\D}{\Delta}
\newcommand{\N}{\mathbb{N}}
\newcommand{\R}{\mathbb{R}}
\newcommand{\C}{\mathbb{C}}
\DeclarePairedDelimiter\abs{|}{|}
\DeclarePairedDelimiter\parens{(}{)}
\DeclarePairedDelimiter\set{\{}{\}}
\DeclarePairedDelimiterX\setof[2]{\{}{\}}{#1\,:\,#2}
\DeclareDocumentCommand{\shad}{O{q}}{\partial_{#1}}
\DeclareDocumentCommand{\upshad}{O{t}}{U^{#1}}
\DeclareDocumentCommand{\k}{O{t}}{k^{#1}}
\DeclareDocumentCommand{\K}{O{t}}{K^{#1}}
\DeclareDocumentCommand{\layer}{O{s}O{\N}}{\binom{#2}{#1}}
\DeclareDocumentCommand{\nlayer}{O{n} m}{\binom{[{#1}]}{#2}}
\DeclareDocumentCommand{\C}{O{s}}{\mathcal{C}_{#1}}
\DeclareDocumentCommand{\iseq}{m m}{n_{#1},n_{{#1}-1},\dots,n_{{#1}-{#2}+1}}
\DeclareDocumentCommand{\is}{O{s}O{s}O{\ell}}{[\iseq{#2}{#3}]_{#1}}
\DeclareDocumentCommand{\seq}{O{s}O{\ell}}{(\iseq{#1}{#2})}
\newcommand{\cK}{\mathcal{K}}
\newcommand{\cF}{\mathcal{F}}
\newcommand{\Abar}{\overline{\A}}
\newcommand{\cEbar}{\overline{\cE}}
\newcommand{\st}{\ \text{s.t.}\,}
\newcommand{\divides}{\mathrel{\bigm|}}
\declaretheorem[name=Theorem]{thm}
\declaretheorem[name=Lemma, sibling=thm]{lem}
\declaretheorem[name=Corollary, sibling=thm]{cor}
\declaretheorem[name=Definition, style=definition, sibling=thm]{defn}
\declaretheorem[style=definition]{problem}
\declaretheorem[name=Question, style=definition]{qu}
\declaretheorem[name=Remark, style=definition, sibling=thm]{rem}
\newcommand{\bigwo}{\mathbin{\big\backslash}}
\newcommand{\Bigwo}{\mathbin{\Big\backslash}}
\newcommand{\retlex}{<_{R}}
\newcommand{\A}{\mathcal{A}}
\newcommand{\B}{\mathcal{B}}
\newcommand{\cE}{\mathcal{E}}
\renewcommand{\H}{\mathcal{H}}
\newcommand{\J}{\mathcal{J}}
\renewcommand{\B}{\mathcal{B}}
\renewcommand{\R}{\mathcal{R}}
\newcommand{\U}{\mathcal{U}}
\begin{document}

\pagestyle{plain}

\title{Many cliques in bounded-degree hypergraphs}
\author{Rachel Kirsch\thanks{Department of Mathematical Sciences, George Mason University. Partially supported by NSF DMS-1839918.}
\and
Jamie Radcliffe\thanks{Department of Mathematics, University of Nebraska-Lincoln. Partially supported by Simons grant 429383.}}
\date{March 1, 2023}
\maketitle

\abstract{Recently Chase determined the maximum possible number of cliques of size $t$ in a graph on $n$ vertices with given maximum degree. Soon afterward, Chakraborti and Chen answered the version of this question in which we ask that the graph have $m$ edges and fixed maximum degree (without imposing any constraint on the number of vertices). In this paper we address these problems on hypergraphs. For $s$-graphs with $s\ge 3$ a number of issues arise that do not appear in the graph case. For instance, for general $s$-graphs we can assign degrees to any $i$-subset of the vertex set with $1\le i\le s-1$.

We establish bounds on the number of $t$-cliques in an $s$-graph $\H$ with $i$-degree bounded by $\D$ in three contexts: 
$\H$ has $n$ vertices; $\H$ has $m$ (hyper)edges; and (generalizing the previous case) $\H$ has a fixed number $p$ of $u$-cliques for some $u$ with $s\le u \le t$. When $\D$ is of a special form we characterize the extremal $s$-graphs and prove that the bounds are tight. These extremal examples are the shadows of either Steiner systems or partial Steiner systems. On the way to proving our uniqueness results, we extend results of F\"uredi and Griggs on uniqueness in Kruskal-Katona from the shadow case to the clique case.}

\section{Introduction}

There has been recent interest in generalized Tur\'an problems: determining the maximum (or minimum) number of copies of a fixed graph $T$ that a graph $G$ can contain, subject to a variety of constraints. The roots of this problem go back to Tur\'an's theorem \cite{Turan1941} and its extension by Zykov \cite{Z49} which determine, respectively, the maximum number of copies of $K_2$ and $K_t$ in a graph on $n$ vertices containing no $K_{r+1}$. The paper of Alon and Shikhelman \cite{AlonS} proved many foundational results and introduced the general problem to a wider audience.

\subsection{Many cliques in bounded-degree graphs}

We will focus on hypergraph versions of three generalized Tur\'an problems: determining the maximum number of cliques in graphs of bounded degree, using either vertices, edges, or cliques as a ``resource.'' We discuss the graph problems below; for a more complete history see \cite{CC20, chase20, CR14, CR2016, GLS, KR19, KR21}. The first phase of progress in these problems consisted of ``signpost'' results: estimates that are best possible infinitely often, but not for all values of the parameters.

We write $\k(G)$ for the number of cliques of size $t$ (and always insist that $t\ge 1$). Similarly $\k[\ge t](G)$ is the number of cliques of size at least $t$ in $G$. The next two theorems are versions of results due to Wood, phrased to match the hypergraph results we prove later.

\begin{thm}[Wood \cite{Wood2007}]\label{thm:folklore_vs}
	If $G$ is a graph on $n$ vertices with $\D(G)\le r-1$ then 
	\[
		\k(G)   \le \frac{n}r\, \binom{r}t          \quad\text{and}\quad
		\k[\ge 1](G) \le \frac{n}r\, \parens[\big]{2^r-1} 
	\]
	with equality when $G = aK_r$.
\end{thm}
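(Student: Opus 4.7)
The plan is to prove the first inequality by a standard double-counting of incidences between vertices and $t$-cliques, and then derive the second inequality by summing over all relevant $t$.

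For the first bound, I would count pairs $(v,C)$ where $v$ is a vertex of $G$ and $C$ is a $t$-clique containing $v$. Fix a vertex $v$. Since every other vertex of a clique containing $v$ must be a neighbor of $v$, the number of $t$-cliques through $v$ is at most $\binom{\deg(v)}{t-1}\le \binom{r-1}{t-1}$. Summing over $v$ gives $t\cdot \k(G)\le n\binom{r-1}{t-1}$, and the elementary identity
\[
\frac{1}{t}\binom{r-1}{t-1}=\frac{1}{r}\binom{r}{t}
\]
rearranges this to $\k(G)\le \frac{n}{r}\binom{r}{t}$, as required.

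For the second bound I would first observe that $\D(G)\le r-1$ forces every clique of $G$ to have size at most $r$, since a clique of size $s$ contains a vertex of degree $\ge s-1$. Therefore $\k[\ge 1](G)=\sum_{t=1}^{r}\k(G)$, and applying the first inequality termwise yields
\[
\k[\ge 1](G)\le \sum_{t=1}^{r}\frac{n}{r}\binom{r}{t}=\frac{n}{r}\bigl(2^{r}-1\bigr).
\]

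Finally, for equality, if $G=aK_{r}$ then $n=ar$, the number of $t$-cliques is $a\binom{r}{t}=\frac{n}{r}\binom{r}{t}$, and summing gives $a(2^{r}-1)=\frac{n}{r}(2^{r}-1)$. There is essentially no obstacle here: the whole argument is a one-line averaging followed by a summation, and the only subtlety is noting that the summation in the second inequality terminates at $t=r$ because of the degree bound. I would present this as a short, self-contained proof, since the paper's focus will be on the considerably more delicate hypergraph analogues to come.
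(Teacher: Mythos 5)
Your proof is correct, and it is exactly the $s=2$, $i=1$ specialization of the double-counting argument the paper later uses for \cref{thm:Q2gen} (counting pairs $(I,K)$ with $I\subseteq K$); the paper itself states \cref{thm:folklore_vs} without proof, citing Wood. The derivation of the identity $\tfrac{1}{t}\binom{r-1}{t-1}=\tfrac{1}{r}\binom{r}{t}$, the observation that a clique of size $s$ forces a vertex of degree $\ge s-1$ so that the sum terminates at $t=r$, and the equality check for $aK_r$ are all correct.
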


\begin{thm}[Wood \cite{Wood2007}]\label{thm:folklore_edges}
	If $G$ is a graph having $m$ edges with $\D(G)\le r-1$ then 
	\[
		\k(G) \le \frac{m}{\binom{r}2} \, \binom{r}t       \quad\text{and}\quad \k[\ge2](G) \le \frac{m}{\binom{r}2} \, \parens[\big]{2^r-r-1}, 
	\]
	with equality when $G = aK_r$.
\end{thm}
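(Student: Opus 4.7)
The cleanest route is a simple double count, charging each $t$-clique to its edges.

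First I would count pairs $(K, e)$ where $K$ is a $t$-clique in $G$ (with $t\ge 2$) and $e$ is an edge of $K$. Counting by $K$ gives $\binom{t}{2}\,\k(G)$. Counting by $e=uv$, any $t$-clique containing $e$ is obtained by choosing $t-2$ further vertices from $N(u)\cap N(v)$; since $|N(u)\setminus\{v\}|\le r-2$, at most $\binom{r-2}{t-2}$ such cliques exist through each edge. Therefore
\[
\binom{t}{2}\,\k(G) \;\le\; m \binom{r-2}{t-2}.
\]

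Next I would rearrange this using the elementary identity
\[
\frac{\binom{r-2}{t-2}}{\binom{t}{2}}
= \frac{2(r-2)!}{t!(r-t)!}
= \frac{\binom{r}{t}}{\binom{r}{2}},
\]
which gives the first bound $\k(G)\le \tfrac{m}{\binom{r}{2}}\binom{r}{t}$. For the second bound I would sum over $t\ge 2$ and apply the identity $\sum_{t=2}^{r}\binom{r}{t}=2^{r}-r-1$ (noting that $\binom{r}{t}=0$ for $t>r$, consistent with $\k(G)=0$ since $\D\le r-1$ forces $\omega(G)\le r$).

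Equality for $G=aK_r$ is immediate: then $m=a\binom{r}{2}$ and $\k(G)=a\binom{r}{t}$, giving $\tfrac{m}{\binom{r}{2}}\binom{r}{t}=a\binom{r}{t}$. There is no real obstacle here; the only point requiring care is the edge-level bound $|N(u)\cap N(v)|\le r-2$, which follows because removing $v$ from $N(u)$ already costs one slot out of the at most $r-1$ available. Compared to the vertex version in \Cref{thm:folklore_vs} (which double-counts incidences of vertices in cliques), this argument is essentially the same idea one layer up, and the same strategy will serve as the template for the hypergraph generalizations to come.
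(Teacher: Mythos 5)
Your proof is correct: the double count of clique--edge incidences, the bound $\lvert N(u)\cap N(v)\rvert\le r-2$ giving at most $\binom{r-2}{t-2}$ $t$-cliques through each edge, the identity $\binom{r}{2}\binom{r-2}{t-2}=\binom{r}{t}\binom{t}{2}$, and the summation $\sum_{t=2}^{r}\binom{r}{t}=2^{r}-r-1$ all check out, as does equality for $aK_r$. The paper cites this statement from Wood without giving a proof, and your argument is exactly the $s=2$, $i=1$ specialization of the counting scheme the paper uses for its hypergraph generalization in \cref{thm:edge1}, with the Kruskal--Katona step on neighborhoods collapsing to the elementary common-neighborhood bound; so this is essentially the intended approach.
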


Quite recently results in this direction were proved that are best possible for all values of the parameters. The vertex problem was solved by Chase \cite{chase20}. He proved a conjecture of Gan, Loh, and Sudakov \cite{GLS} using their reduction of the problem to the case $t=3$. Later Chao and Dong \cite{ChaoDong2022} gave a new proof of \cref{thm:chase} that proves the result for all $t$ simultaneously.

\begin{thm}[Chase \cite{chase20}, Chao and Dong \cite{ChaoDong2022}]\label{thm:chase}
	Let $G$ be a graph with $\D(G) \le r-1$ on $n$ vertices. Let $a$ and $b$ satisfy $n=ar+b$ with $0 \le b < r$. Then
    \[
        \k(G) \le a\binom{r}t + \binom{b}t,
    \]
    with equality for the graph $G = aK_{r} \cup K_b$, the disjoint union of $a$ copies of $K_r$ and one copy of $K_b$.
\end{thm}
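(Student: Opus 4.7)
The plan is induction on $n$. The base case $n \le r - 1$ forces $a = 0$ and $b = n$, so the bound reduces to $\k(G) \le \binom{n}{t}$, which is immediate since $V(G)$ has only $n$ elements.

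For the inductive step with $n \ge r$, the first and most useful reduction is the following structural observation: if $G$ contains a copy $K$ of $K_r$, then each vertex of $K$ already has $r-1$ neighbors inside $K$, and the hypothesis $\D(G) \le r-1$ leaves no room for any edge from $K$ to the rest of $G$. Hence $K$ is a connected component of $G$. Deleting $K$ yields a graph on $n - r = (a-1)r + b$ vertices satisfying the hypothesis, and the inductive hypothesis gives $\k(G \setminus K) \le (a-1)\binom{r}{t} + \binom{b}{t}$; adding back the $\binom{r}{t}$ cliques inside $K$ closes the case and explains why equality is attained by $aK_r \cup K_b$.

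The substantive case is when $G$ is $K_r$-free. Here the extremal example is unavailable, so strict inequality should hold and a somewhat cruder estimate should suffice. My plan is to combine the double-counting identity $t \, \k(G) = \sum_v \k[t-1](G[N(v)])$ with the observation that $K_r$-freeness of $G$ forces each $G[N(v)]$ to be a $K_{r-1}$-free graph on at most $r-1$ vertices, which sharpens the naive local bound $\k[t-1](G[N(v)]) \le \binom{r-1}{t-1}$. Summing the improved local estimates over $v$ and manipulating should yield the target $a\binom{r}{t} + \binom{b}{t}$.

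The main obstacle is quantitative: the unrefined averaging above only recovers Wood's bound $\frac{n}{r}\binom{r}{t}$ from \cref{thm:folklore_vs}, and closing the gap to Chase's bound $a\binom{r}{t} + \binom{b}{t}$ requires a delicate accounting of the savings produced by $K_r$-freeness. This is precisely the step that occupies the papers of Chase (who reduces the general-$t$ bound to the $t = 3$ case via the Gan--Loh--Sudakov argument and then closes $t = 3$ with a triangle-specific exchange) and of Chao and Dong (who bypass the reduction with a unified compression argument valid for all $t$ simultaneously); I expect the bulk of a complete write-up to be devoted to executing one of these two strategies carefully.
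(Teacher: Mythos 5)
There is a genuine gap, and you have in effect named it yourself. Your first two steps are fine: the base case, and the observation that under $\D(G)\le r-1$ any copy of $K_r$ has no edges leaving it, hence is a component that can be split off and handled by induction. But this only reduces the theorem to its real content, the $K_r$-free case, and for that case you do not supply an argument. The averaging identity $t\,\k(G)=\sum_v \k[t-1]\bigl(G[N(v)]\bigr)$ with the local bound $\k[t-1](G[N(v)])\le\binom{r-1}{t-1}$ gives only Wood's bound $\frac{n}{r}\binom{r}{t}$ (\cref{thm:folklore_vs}), which is strictly weaker than $a\binom{r}{t}+\binom{b}{t}$ whenever $r\nmid n$, and the ``sharpening'' you invoke --- that $K_r$-freeness makes each $G[N(v)]$ a $K_{r-1}$-free graph on at most $r-1$ vertices, so one could insert Zykov's bound (\cref{thm:Zykov}) locally --- is not shown to close this gap, nor is it known to do so by any short computation: the loss from vertices of degree below $r-1$ and the savings from $K_r$-freeness have to be traded off globally, and this is exactly the difficult part that occupies Chase's paper (reduction to $t=3$ via Gan--Loh--Sudakov, then a long $t=3$ argument) and Chao--Dong's compression proof. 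Saying that one of these strategies should be ``executed carefully'' is an acknowledgement that the proof is missing, not a proof.

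Your intuition that in the $K_r$-free case ``a somewhat cruder estimate should suffice'' is also misleading: near-extremal $K_r$-free configurations (for instance unions of cliques $K_{r-1}$, or graphs close to disjoint cliques with a few edges removed) come close enough to the bound that no simple averaging argument is known to finish, which is why the conjecture of Gan, Loh and Sudakov stood open until Chase's work. Note also that in this paper \cref{thm:chase} is quoted as an external result of Chase and of Chao--Dong and is not reproved here, so if you want a complete argument you must either reproduce one of those proofs in full or cite them for the $K_r$-free case; as written, your proposal proves only the easy reduction.
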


Using \cref{thm:chase}, Chakraborti and Chen \cite{CC20} solved the edge problem. 

\begin{thm}[Chakraborti and Chen \cite{CC20}]\label{thm:cc}
    Let $G$ be a graph with $\D(G)\le r-1$ having $m$ edges. Let $a$ and $b$ satisfy $m = a\binom{r}{2}+ b$ with $0\le b < \binom{r}{2}$. Then
    \[
        \k(G) \le a\binom{r}t + \k(\C[2](b)),
    \]
    with equality for the graph $G = aK_r\cup \C[2](b)$. Here, $\C[2](b)$ is the colex graph having $b$ edges: the graph on vertex set $\N$ whose edges are the first $b$ pairs in colexicographic order.
\end{thm}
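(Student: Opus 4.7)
\medskip\noindent\textbf{Proof plan.} I would deduce Theorem~\ref{thm:cc} from Theorem~\ref{thm:chase} by an inductive peeling of $K_r$-components, reducing at the base to a clique form of the Kruskal--Katona theorem. The structural fact that drives the argument is that $\D(G) \le r-1$ forces every copy of $K_r$ in $G$ to be an entire connected component: each of the $r$ vertices of such a $K_r$ already has $r-1$ neighbors inside it, saturating its degree budget. Consequently, deleting a $K_r$-component strips off precisely $\binom{r}{2}$ edges and $\binom{r}{t}$ cliques of size $t$, leaving a graph $G'$ with $\D(G') \le r-1$ and no clique of size $t$ straddling the two parts, which exactly matches the structure of the target bound under the decomposition $m = a\binom{r}{2} + b$.

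The base case of the induction is $a = 0$ (equivalently $b = m < \binom{r}{2}$), where the desired inequality reduces to $\k(G) \le \k(\C[2](b))$. This is the clique form of Kruskal--Katona: among all graphs with $b$ edges, the colex graph $\C[2](b)$ maximizes the number of $t$-cliques. No separate appeal to the max-degree hypothesis is needed here, because $b < \binom{r}{2}$ forces $\C[2](b) \subseteq K_r$ and hence $\D(\C[2](b)) \le r-1$ automatically.

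The main obstacle is the case where the peeling never starts: $G$ has $m \ge \binom{r}{2}$ edges and $\D(G) \le r-1$ but contains no $K_r$ subgraph (a dense bipartite graph is the prototypical example). Here I would invoke Theorem~\ref{thm:chase} on the $n_0$ non-isolated vertices of $G$, together with the handshake inequality $n_0 \ge 2m/(r-1)$, yielding $\k(G) \le \lfloor n_0/r\rfloor\binom{r}{t} + \binom{n_0 \bmod r}{t}$. The difficulty is that this vertex-based estimate need not be directly comparable to the edge-based target $a\binom{r}{t} + \k(\C[2](b))$ when $n_0$ substantially exceeds $2m/(r-1)$, as happens for long sparse components. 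My plan to bridge this gap is a compression step preceding Chase: perform local edge rearrangements that concentrate edges into dense subgraphs without decreasing $\k$ or violating the degree bound. After enough such compressions, either a $K_r$-component is created and the peeling induction takes over, or $n_0$ is driven down close to $2m/(r-1)$, at which point Chase's bound delivers the target directly. Setting up these compressions so that they simultaneously preserve $m$, respect $\D \le r-1$, and monotonically weakly increase $\k$ is the step I expect to be the most technically delicate.
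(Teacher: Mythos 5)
This theorem is not proved in the paper at all: it is quoted as a known result of Chakraborti and Chen \cite{CC20}, whose original proof is a long and delicate argument, so your proposal has to stand on its own, and it does not. The peeling step and the base case are fine: with $\D(G)\le r-1$ every $K_r$ is a full component, deleting it removes exactly $\binom{r}{2}$ edges and $\binom{r}{t}$ $t$-cliques, and when $a=0$ the bound $\k(G)\le \k(\C[2](b))$ is just the clique form of Kruskal--Katona (\cref{thm:kkt} with $s=2$), with no degree hypothesis needed. But the entire difficulty of the theorem sits in the case you defer: $m\ge\binom{r}{2}$, $\D(G)\le r-1$, and no $K_r$-subgraph. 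There, as you yourself note, Chase's bound (\cref{thm:chase}) is monotone increasing in the number of non-isolated vertices $n_0$, so for sparse graphs (large $n_0$) it exceeds the edge-based target $a\binom{r}{t}+\k(\C[2](b))$ and gives nothing; the handshake bound $n_0\ge 2m/(r-1)$ only bounds $n_0$ from below, which is the unhelpful direction.

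Your proposed fix --- ``local edge rearrangements that concentrate edges into dense subgraphs without decreasing $\k$, preserving $m$ and the degree bound'' --- is not a proof step but a restatement of the problem. No such compression is defined, and designing moves that simultaneously (i) keep $m$ fixed, (ii) never violate $\D\le r-1$, and (iii) never decrease the $t$-clique count is exactly the hard content of \cite{CC20}; standard shifting/compression operators fail (ii), and ad hoc swaps can easily fail (iii). Moreover, even the terminal state of your scheme is not checked: at $n_0=\lceil 2m/(r-1)\rceil$ you would still need $\binom{n_0-ar}{t}\le \k(\C[2](b))$, which requires an argument comparing a vertex remainder $\approx 2b/(r-1)$ with the colex clique count $\k(\C[2](b))$. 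So the reduction ``Chase $+$ peeling $+$ compression $\Rightarrow$ Chakraborti--Chen'' is a plausible outline, but the step that would make it a proof is missing.
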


In this paper we are concerned with hypergraph versions of these problems. To state the questions we need to introduce our notation for hypergraphs and discuss the issue of degrees in hypergraphs. This we do next.  

In \cref{sec:KK} we discuss various versions of the Kruskal-Katona theorem, which is central in this area. In \cref{sec:signpost} we prove general results for arbitrary degree bounds. In \cref{sec:extremal} we introduce constructions which, in some cases, give optimal examples,  and prove some results about optimality and asymptotic optimality. Finally, in \cref{sec:open} we mention some open problems.

\subsection{Hypergraph definitions and questions}

Our notation is mostly standard.

\begin{defn}
	An \emph{$s$-graph} $\H$ is a pair $(V,\cE)$ consisting of a set of vertices $V$ together with a subset $\cE\of \layer[s][V]$. Frequently we'll suppress mention of the vertex set and simply use $\H$ to refer to the edge set. 
	If $I\of V$ has size $i$ then we define the \emph{neighborhood} $\H(I)$ of $I$ to be the $(s-i)$-graph with edge set 
\[
	\cE\parens[\big]{ \H(I)} = \setof{E\wo I}{I\of E \in \cE(\H)}.
\]
The \emph{degree of $I$ in $\H$} is the number of these edges, i.e.,
\[
	d_{\H}(I) = \abs[\big]{\setof{E \in \cE(\H)}{I\of E}}.
\]
We let the vertex set of $\H(I)$ be the union of all the edges in $\cE(\H(I))$, i.e., we omit all vertices not contained in an edge of $\H(I)$. The \emph{maximum $i$-degree} of $\H$ is simply
\[
	\D_i(\H) = \max\setof[\Big]{d_{\H}(I)}{I \in \layer[i][V]}.
\]
\end{defn}

We now define shadows and cliques in hypergraphs.

\begin{defn}
    Suppose that $\A$ is an $s$-graph. The \emph{shadow of $\A$ on level $q$} (where $q<s$) is given by
    \[
        \shad(\A) = \setof[\Big]{B}{\text{$\abs{B} = q $ and $\exists\, A\in \A \st B\of A$}} = \union_{A\in \A} \binom{A}{q}.
    \]
    The \emph{set of cliques on level $t$} (where $t>s$) is
    \[
        \K(\A) = \setof[\Big]{C}{\text{$\abs{C}=t$ and $\binom{C}{s} \of \A$}}.
    \]
    We let $\k(\A) = \abs[\big]{\K(\A)}$. 
\end{defn}

We can now state the questions we address in this paper.

\begin{qu}\label{qu:vertices}
    Suppose that an $s$-graph $\H$ has $n$ vertices, and that for some $1\le i \le s-1$ and $D > 0$ we have $\D_i(\H) \le D$. Given $t \ge s$, what is the maximum possible value of $\k(\H)$? In other words we aim to determine
    \[
        \max\setof{\k(\H)}{\text{$\H$ an $s$-graph with $n$ vertices and $\D_i(\H) \le D$}}.
    \]
\end{qu}

\begin{qu}\label{qu:edges}
    Suppose that an $s$-graph $\H$ has $m$ edges, and that for some $1\le i \le s-1$ and $D > 0$ we have $\D_i(\H) \le D$. Given $t \ge s$, what is the maximum possible value of $\k(\H)$? In other words, what is
    \[
        \max\setof{\k(\H)}{\text{$\H$ an $s$-graph with $m$ edges and $\D_i(\H) \le D$}}?
    \]
\end{qu}

\begin{qu}\label{qu:cliques}
	Suppose that an $s$-graph $\H$ has $\k[u](\H) = p$ for some $u \ge s$, and that for some $1\le i \le s-1$ and $D > 0$ we have $\D_i(\H) \le D$. Given $t \ge u$, what is the maximum possible value of $\k(\H)$? I.e., determine
	\[
	\max\setof{\k(\H)}{\text{$\H$ an $s$-graph with $\k[u](\H)=p$ and $\D_i(\H) \le D$}}.
	\]
\end{qu}

\subsection{Related extremal problems} 
\label{sub:prior_work}

The area of extremal problems for hypergraphs is rich and deep. The Kruskal-Katona theorem, which we discuss in \cref{sec:KK}, is an upper bound on the number of $t$-cliques in an $s$-graph with a given number of edges. Moreover, it implies a bound on the number of $t$-cliques in an $s$-graph having a given number of $u$-cliques for some $s < u\le t$. In \cite{Frohmader10}, Frohmader improved this bound in the case $s=2$.

The Kruskal-Katona theorem puts few restrictions on the $s$-graphs involved. A substantial amount of work has been done when we forbid large cliques in our $s$-graphs. The earliest such result is by Zykov \cite{Z49}. He proved the following result for graphs.

\begin{thm}[Zykov \cite{Z49}]\label{thm:Zykov}
    If $\H$ is a graph on $n$ vertices containing no $(r+1)$-clique then $\k(\H)\le \k(T_r(n))$. Here $T_r(n)$ is the \emph{Tur\'an graph}, that is to say it is the complete $r$-partite graph on $n$ vertices whose parts are of sizes as equal as possible. 
\end{thm}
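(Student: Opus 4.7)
The plan is to apply Zykov's classical symmetrization argument. Given a $K_{r+1}$-free graph $\H$ on $n$ vertices, for each vertex $x$ let $k_t^x(\H)$ denote the number of $t$-cliques of $\H$ containing $x$. Suppose $u$ and $v$ are non-adjacent vertices with $k_t^u(\H) \ge k_t^v(\H)$. Form $\H'$ by deleting $v$ and inserting a new vertex $v'$ with $N_{\H'}(v') = N_{\H}(u)$, so that $u$ and $v'$ are non-adjacent in $\H'$. I would first check the two key properties: (a) $\H'$ is still $K_{r+1}$-free, since any clique of $\H'$ contains at most one of the non-adjacent pair $\{u,v'\}$, and a clique of size $r+1$ through $v'$ would yield, after replacing $v'$ by $u$, a $K_{r+1}$ in $\H$; and (b) $\k(\H') \ge \k(\H)$, since $t$-cliques avoiding $v'$ biject with those of $\H$ avoiding $v$, while $t$-cliques through $v'$ biject with those of $\H$ through $u$ (which outnumber those through $v$).

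Next I would iterate the symmetrization. After finitely many steps we may assume non-adjacency is an equivalence relation on $V(\H)$: whenever $u \not\sim v$ and $v \not\sim w$, a suitable symmetrization can be used to enforce $u \not\sim w$, and a potential function (for instance the number of distinct open neighborhoods, refined by a lexicographic tiebreaker on the multiset of type sizes) can be shown to strictly decrease under any symmetrization that is not the identity on neighborhoods. Once non-adjacency is an equivalence relation, $\H$ is a complete multipartite graph, and the $K_{r+1}$-free hypothesis forces it to have at most $r$ parts (else one picks one vertex per part to form a $K_{r+1}$).

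Finally, I would optimize directly over complete multipartite graphs on $n$ vertices with at most $r$ parts. If the parts have sizes $a_1, \dots, a_r$ (padding with zeros if fewer than $r$ parts), then
\[
    \k(\H) = \sum_{S \in \binom{[r]}{t}} \prod_{i \in S} a_i = e_t(a_1, \dots, a_r),
\]
the $t$th elementary symmetric polynomial. A standard exchange argument shows that whenever $a_i \ge a_j + 2$, transferring a unit from part $i$ to part $j$ strictly increases $e_t$ (for $t\ge 2$): the difference factors as $(a_i - a_j - 1)$ times a nonnegative elementary symmetric polynomial in the remaining variables. Hence the maximum is attained precisely when all part sizes differ by at most $1$, which is the Tur\'an graph $T_r(n)$.

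The main obstacle I anticipate is the termination of the symmetrization phase; the individual step is transparent, but establishing that iteration reaches a complete multipartite graph requires a genuine potential argument rather than just noting monotonicity of $\k$. The $K_{r+1}$-freeness and the comparison step are straightforward, and the final optimization over multipartite shapes is a short convexity-type computation on elementary symmetric polynomials.
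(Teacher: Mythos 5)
The paper does not prove this theorem---it is quoted from Zykov's paper---and your plan is exactly the classical symmetrization route, so the comparison is with that argument rather than with anything in the text. Two of your three steps are sound as written: the single symmetrization step (replace $v$ by a copy of $u$ when $u\not\sim v$ and $k^u_t\ge k^v_t$) does preserve $K_{r+1}$-freeness and does not decrease the $t$-clique count, and the final optimization of $e_t(a_1,\dots,a_r)$ over part sizes is correct; there the increase under an exchange is only $(a_i-a_j-1)\,e_{t-2}$ of the remaining variables, which can vanish, so you only get non-strict monotonicity in general, but that already yields $\k(\H)\le\k(T_r(n))$.

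The genuine gap is the middle phase, and it is not just a bookkeeping issue. To make non-adjacency transitive you must handle triples $u\not\sim v\not\sim w$, $u\sim w$; in the problematic case $k^v_t\ge k^u_t$ and $k^v_t\ge k^w_t$ the move that enforces transitivity replaces \emph{both} $u$ and $w$ by copies of $v$, and its gain is exactly the number of $t$-cliques containing the edge $uw$, which can be $0$ when $t\ge 3$, so maximality of the clique count alone gives no contradiction and a secondary argument is genuinely required. The potential you propose does not supply it as stated: under a single symmetrization the number of distinct open neighborhoods need not drop (for instance if $v$ already had a twin), and the multiset of twin-class sizes merely moves one vertex from $v$'s class to $u$'s class, a change whose direction is dictated by clique counts rather than class sizes, so no fixed lexicographic order decreases monotonically. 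A standard repair that reuses your pieces: fix $v_1$ maximizing $k^{v_1}_t$ and replace, one at a time, every vertex outside $N(v_1)\cup\{v_1\}$ by a copy of $v_1$ (each such vertex's count only decreases during the process while $k^{v_1}_t$ is unchanged, and $K_{r+1}$-freeness is kept); the result is $\overline{K_a}\vee H$ with $H=\H[N(v_1)]$ being $K_r$-free, and one finishes by induction on $r$ using $k_t(\overline{K_a}\vee H)=k_t(H)+a\,k_{t-1}(H)$, followed by your balancing step. Alternatively one can argue on a maximizer with a carefully chosen secondary maximization; but some such explicit argument must replace the potential you sketch.
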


The analogous result where we constrain $G$ to have $m$ edges is much more recent. The following result is due to Frohmader \cite{Frohmader}. To describe the result we need to define the $r$-partite colex Tur\'an graph. Let $r$ be a positive integer.  The \emph{$r$-partite colex order} is the restriction of the colex order on $\binom{\N}2$ to $\setof{ij}{i \not\equiv j \pmod{r}}$. The \emph{$r$-partite colex Tur\'an graph with $m$ edges}, $CT_r(m)$, is the graph on vertex set $\N$ whose edge set consists of the first $m$ edges in $r$-partite colex order. (Note that if $m = t_r(n)$, then the unique non-trivial component of $CT_r(m)$ is isomorphic to  $T_r(n)$.)  

\begin{thm}[Frohmader \cite{Frohmader}] \label{thm:EdgeZykov}
    If $G$ is a $K_{r+1}$-free graph with $m$ edges and $2 \leq t \leq r$, then $\k(G) \leq \k\bigl(CT_r(m)\bigr)$.
\end{thm}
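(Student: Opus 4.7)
The plan is to combine Zykov's symmetrization (which on its own proves \cref{thm:Zykov}) with a Kruskal-Katona style compression argument that tracks the edge count. Zykov symmetrization reduces any $K_{r+1}$-free graph to a complete multipartite graph on at most $r$ parts without decreasing the number of $t$-cliques, but since it can alter the edge count, additional bookkeeping is needed for the edge-fixed version, plus a finishing step because $CT_r(m)$ is not in general complete multipartite.

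First, I would reduce to the multipartite case. Take $G$ which is $K_{r+1}$-free with $m$ edges and maximum $\k(G)$. For any two nonadjacent vertices $u,v$, a short double-counting argument shows that either replacing $N(u)$ by a copy of $N(v)$ or doing the reverse does not decrease $\k(G)$, and both operations preserve the $K_{r+1}$-free property. The edge count may change by $d(v)-d(u)$, but I would compensate by adding or deleting edges chosen to bring in the most (respectively lose the fewest) $t$-cliques, while staying $K_{r+1}$-free. Iterating, we may assume $G$ is a complete multipartite graph $K_{a_1,\ldots,a_r}$ with possibly a few leftover edges between two classes. Among complete multipartite graphs with a fixed edge count, a convexity/exchange argument on the symmetric polynomial $\k=\sum_{I\in\binom{[r]}{t}}\prod_{i\in I}a_i$ shows that the maximum is attained by making the parts as unequal as possible, in a way mirroring the construction of $CT_r(m)$.

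The main obstacle, and the step that distinguishes this from \cref{thm:Zykov}, is the final alignment with $CT_r(m)$ when $m$ is not a Tur\'an number: $CT_r(m)$ has a ``partial'' last part connected only to an initial segment of the previously-built graph. To finish, I would perform a Kruskal-Katona-style shift on the residual edges at this boundary, arguing that swapping any such edge toward the initial segment in $r$-partite colex order cannot decrease $\k$. This is cleanest by induction on $m$: peel off the last colex edge from $CT_r(m)$ and a corresponding edge of $G$, and apply the inductive hypothesis to the remainders. The delicate verification is that the $(t-1)$-cliques in the common neighborhood of the ``earlier'' colex edge always dominate those for the arbitrary edge being swapped out, using $K_{r+1}$-freeness to control the sizes of cliques across the swap.
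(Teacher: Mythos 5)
This statement is not proved in the paper at all: it is quoted as a known theorem of Frohmader (his proof of the Kalai--Eckhoff conjecture on clique vectors of $K_{r+1}$-free graphs), and that proof is a substantial argument analyzing the whole clique vector at once, not a symmetrization-plus-compression scheme of the kind you sketch. So your proposal has to stand on its own, and it has two genuine gaps.

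First, the reduction to the (near-)multipartite case does not go through. Zykov symmetrization indeed preserves $K_{r+1}$-freeness and, in the clique-non-decreasing direction, does not decrease $\k$, but it changes the edge count by $d(v)-d(u)$, and the direction forced on you by the clique count may \emph{increase} the number of edges. To restore $m$ you must then delete edges, and deleting edges can destroy arbitrarily many $t$-cliques; ``deleting the edges that lose the fewest cliques'' gives no bound at all, so the invariant ``$m$ edges, $K_{r+1}$-free, $\k$ not decreased'' is not maintained. (Padding with disjoint edges handles the case where symmetrization loses edges, but not this one.) This is precisely why the edge-constrained problem is harder than \cref{thm:Zykov}, and your sketch does not resolve it. Incidentally, the interim claim that among complete multipartite graphs with a fixed number of edges one should make the parts ``as unequal as possible'' is also at odds with the target: when $m$ is a Tur\'an number, $CT_r(m)$ is the \emph{balanced} Tur\'an graph.

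Second, the finishing induction on $m$ is exactly the hard core of the theorem, and it is asserted rather than proved. Peeling off the last colex edge of $CT_r(m)$ and ``a corresponding edge of $G$'' requires you to exhibit an edge $e$ of $G$ lying in at most $\k\bigl(CT_r(m)\bigr)-\k\bigl(CT_r(m-1)\bigr)$ cliques of size $t$, i.e.\ a Kruskal--Katona-type local estimate compatible with the $K_{r+1}$-free structure; nothing in the sketch produces such an edge, and even the classical Kruskal--Katona theorem cannot be obtained by this naive one-edge-removal induction without the shifting/compression machinery. Moreover the quantity you would need to control, the number of $(t-1)$-cliques in the common neighborhood of a swapped edge, is itself governed by the very theorem you are trying to prove (one level down), so the induction as set up is circular unless you simultaneously prove the statement for all clique sizes and carry much stronger structural information through the swaps. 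As written, the proposal is not a proof.
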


In stark contrast to these positive results about graphs, even the Tur\'an problem for $s$-graphs with $s>2$ is apparently intractable. 
For no $r > s \ge 3$ is the problem of determining 
\[
	\max\setof{\abs{\H}}{\text{$\H$ is an $s$-graph on vertex set $[n]$ not containing an $(r+1)$-clique}}
\]
solved for all $n$, even asymptotically. (See Keevash's survey \cite{Keevash2011} for extensive discussion of this problem.) The hypergraph analogue of \cref{thm:EdgeZykov} seems no easier.

In a recent paper, Liu and Wang \cite{LiuWang2020} determined the maximum number of $t$-cliques in an $s$-graph on $n$ vertices containing at most $k$ disjoint edges (for $n$ sufficiently large). 

In the context of hypergraphs with bounded degree, Jung \cite{Jung21} considered the question of minimizing the ratio $\abs{\shad[s-1](\H)}/\abs{\H}$ for $s$-graphs $\H$ having bounded $1$-degree. Jung's results have a similar spirit to ours, but are not directly comparable. In an opposite direction F\"uredi and Zhao \cite{FZ2021} considered $3$-graphs $\H$ with large minimum degree and gave asymptotically best possible lower bounds on the size of $\shad[2](\H)$. 

\section{The Kruskal-Katona Theorem}
\label{sec:KK}

The fundamental theorem given in \cref{thm:kkt} below was proved independently by Kruskal \cite{K63} and Katona \cite{K68}. It shows that for a given number of edges $m$, the $s$-graph with the most $t$-cliques and the smallest $q$-shadow is the \emph{colex hypergraph}, denoted $\C(m)$, whose edges form an initial segment in the \emph{colexicographic} (or \emph{colex}) \emph{order}. Colex order is defined on finite subsets of $\N$ by $A < B$ iff $\max(A\symd B) \in B$. The original version of the Kruskal-Katona theorem discussed only shadows, but the version below describes also a closely related version, giving bounds on the number of cliques in $s$-graphs. For completeness we prove these versions (and slightly more) in \Cref{sec:kkd}. 

\begin{defn}
	We define the following functions mapping a number of edges $m$ to the  the size of the $q$-shadow  and the number of $t$-cliques of $\C(m)$.
	\[
		\shad^s(m) = \abs{\shad(\C(m))} \quad\text{and}\quad \k_s(m) = \k(\C(m)).
	\]
\end{defn}

\begin{restatable}[The Kruskal-Katona Theorem \cite{K68,K63}]{thm}{kruskalkatona}
    \label{thm:kkt}
	For all $0\le q < s < t \le n$, if $\A$ is an $s$-graph on vertex set $V$ with $\abs{V}=n$ then we have
	\iftoggle{kkt_details}{
        \[
		    \abs{\shad(\A)} \ge \shad^s(m), \qquad \k(\A) \le \k_s(m), \qquad\text{and}\qquad \abs{\upshad(\A)} \ge \abs{\upshad(\R_s(n,m))}
	    \]}
    {
        \[
		    \abs{\shad(\A)} \ge \shad^s(m), \qquad\text{and}\qquad \k(\A) \le \k_s(m), 
	    \]}
	where $m=\abs{\A}$. In other words, the colex $s$-graph $\C(m)$ has the smallest $q$-shadow and the largest number of $t$-cliques among all $s$-graphs of size $m$\iftoggle{kkt_details}{, whereas the smallest upshadow is achieved by the initial segment in the retlex order}{}. 
\end{restatable}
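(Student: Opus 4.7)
The plan is to prove the shadow inequality first by the classical compression method, and then to deduce the clique inequality as a short corollary.

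For the shadow inequality I would use the $(i,j)$-compressions. For $1\le i<j$, define $S_{ij}(\A)$ by replacing every $A\in\A$ with $j\in A$, $i\notin A$, and $(A\setminus\{j\})\cup\{i\}\notin\A$ by $(A\setminus\{j\})\cup\{i\}$, and keeping every other edge. The standard lemma $\abs{\shad(S_{ij}(\A))}\le\abs{\shad(\A)}$ is proved by exhibiting an explicit injection from $\shad(S_{ij}(\A))\setminus\shad(\A)$ into $\shad(\A)\setminus\shad(S_{ij}(\A))$ that swaps $i$ for $j$. Iterating until stable produces a \emph{left-compressed} family of the same size whose $q$-shadow is no larger than the original. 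For a left-compressed $\A$, I would induct on $s$: letting $N$ be the largest vertex appearing in an edge, decompose $\A_0=\setof{A\in\A}{N\notin A}$ (as an $s$-graph) and $\A_1=\setof{A\setminus\{N\}}{A\in\A,\,N\in A}$ (as an $(s-1)$-graph). Since $N$ is absent from every element of $\shad(\A_0)\cup\shad(\A_1)$ but present in every element of $\setof{B\cup\{N\}}{B\in\shad[q-1](\A_1)}$, one obtains the clean decomposition
\[
\shad(\A)=\shad(\A_0)\cup\shad(\A_1)\cup\setof{B\cup\{N\}}{B\in\shad[q-1](\A_1)},
\]
with the third piece disjoint from the first two. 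Applying induction to $\A_0$ and $\A_1$ and combining with the $s$-cascade representation of $m$ yields $\abs{\shad(\A)}\ge\shad^s(m)$; equality for $\A=\C(m)$ holds because the colex initial segment is itself left-compressed and realizes the cascade identities at each step.

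For the clique inequality, set $\cK=\K(\A)$ and $k=\k(\A)=\abs{\cK}$. Every $C\in\cK$ satisfies $\binom{C}{s}\of\A$, so $\shad[s](\cK)\of\A$ and hence $\abs{\shad[s](\cK)}\le m$. Applying the shadow inequality to the $t$-graph $\cK$ (with shadow taken on level $s$) gives $\abs{\shad[s](\cK)}\ge\abs{\shad[s](\C[t](k))}$. The standard fact that $s$-shadows of colex initial segments of $t$-sets are themselves colex initial segments of $s$-sets lets us write $\shad[s](\C[t](k))=\C(m_1)$ for some $m_1\le m$. Then $\C[t](k)\of\K(\C(m_1))$, and so $k\le\k_s(m_1)\le\k_s(m)$, where the final inequality uses that $\k_s(\cdot)$ is manifestly non-decreasing.

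The main obstacle is the inductive step for left-compressed families together with the $s$-cascade bookkeeping in the shadow bound; once that and the colex initial-segment-preservation property are in hand, the clique inequality follows by the short and essentially formal argument above.
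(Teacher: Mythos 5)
Your handling of the clique bound is correct, but it follows a genuinely different route from the paper's. The paper does not reprove the shadow bound at all (it cites standard presentations) and, in its appendix proof, derives the clique bound by complementation: it first shows $\shad[n-t](\cEbar)=\overline{\upshad(\cE)}$, so that minimizing upshadows reduces to minimizing shadows of complemented families, and then uses $\K(\A)=\nlayer{t}\setminus\upshad\bigl(\nlayer{s}\setminus\A\bigr)$ to convert maximizing cliques into minimizing an upshadow. Besides proving the upshadow clause of the full (appendix) version of the statement, which your proposal does not address, this complementation argument yields the identity $\k_s(m)=\binom{n}{t}-\shad[n-t]^{n-s}\bigl(\binom{n}{s}-m\bigr)$ (\cref{lem:ktcolex}), which the paper relies on later to transfer the F\"uredi--Griggs jumping and uniqueness results from shadows to cliques (\cref{thm:clique_jumping,thm:clique_unique}). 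Your alternative --- apply the shadow bound to the $t$-graph $\K(\A)$, note $\shad[s](\K(\A))\subseteq\A$, use that $\shad[s](\C[t](k))$ is again a colex initial segment, and finish by monotonicity of $\k_s$ --- is a valid and classical deduction of the clique inequality from the shadow inequality; it simply buys less of what the paper needs downstream.

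On the shadow bound itself: since the paper cites it, you may as well cite it too; but if you do include the compression proof, note that the step you treat as routine is where all the work lies. The decomposition $\shad(\A)=\shad(\A_0)\cup\shad(\A_1)\cup\setof{B\cup\{N\}}{B\in\shad[q-1](\A_1)}$ holds for \emph{every} family, so applying induction to $\A_0$ and $\A_1$ separately cannot by itself produce the bound $\shad^s(m)$: you must use left-compressedness to tie the two pieces together (with your maximum-vertex split, the relevant fact is that $B\cup\{x\}\in\A_0$ for every $B\in\A_1$ and $x\in[N-1]\setminus B$; alternatively, the minimum-vertex split, where $\shad[s-1](\A_0)\subseteq\A_1$, gives the cleaner bookkeeping against the cascade representation of $m$). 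As written, that inductive step is a gap, albeit one that is filled along standard lines in the references the paper points to.
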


We also record here the following relationship between the functions $\k_s$ and $\shad[n-t]^{n-s}$.

\begin{restatable}{lem}{remthirtynine}\label{lem:ktcolex}
    For all $0\le s\le t \le n $ and $0\le m \le \binom{n}s$, 
    \[
        \k_s(m) = \binom{n}t - \shad[n-t]^{n-s} \parens[\big]{{\textstyle \binom{n}s} -m }.
    \]
\end{restatable}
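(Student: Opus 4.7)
My plan is to prove this by complement duality in $[n]$. Given any $s$-graph $\A \of \nlayer{n}{s}$ with $\abs{\A} = m$, I will introduce the \emph{dual} $(n-s)$-graph
\[
    \B = \setof{[n]\setminus E}{E \in \nlayer{n}{s}\setminus \A},
\]
which has $\abs{\B} = \binom{n}{s}-m$. The first thing to prove is the pointwise identity $\k(\A) + \abs{\shad[n-t](\B)} = \binom{n}{t}$. This is a direct counting argument: a $t$-set $C\of [n]$ is a $t$-clique of $\A$ iff no $s$-subset of $C$ lies in $\nlayer{n}{s}\setminus\A$; under $S\mapsto [n]\setminus S$, the $s$-subsets of $C$ correspond bijectively to the $(n-s)$-supersets of $[n]\setminus C$; hence $C$ is a $t$-clique iff $[n]\setminus C \notin \shad[n-t](\B)$. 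The identity follows because $C\mapsto [n]\setminus C$ is a bijection $\nlayer{n}{t}\to\nlayer{n}{n-t}$.

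Next I will specialize to $\A=\C[s](m)$ and argue that then $\B = \C[n-s]\bigl(\binom{n}{s}-m\bigr)$. The key ingredient is that $E \mapsto [n]\setminus E$ is an order-reversing bijection from $\nlayer{n}{s}$ to $\nlayer{n}{n-s}$ under colex order: since $A\symd A' = ([n]\setminus A)\symd([n]\setminus A')$, the maximum element of this common symmetric difference lies in $A'$ iff it lies in $[n]\setminus A$, so colex inequalities flip under complementation. Consequently the complement map sends the top $\binom{n}{s}-m$ elements of $\nlayer{n}{s}$ in colex, namely $\nlayer{n}{s}\setminus\C[s](m)$, to the bottom $\binom{n}{s}-m$ elements of $\nlayer{n}{n-s}$ in colex, which by definition is $\C[n-s]\bigl(\binom{n}{s}-m\bigr)$.

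Combining the two parts yields
\[
    \k_s(m) = \k(\C[s](m)) = \binom{n}{t} - \abs{\shad[n-t]\bigl(\C[n-s]\bigl({\textstyle\binom{n}{s}}-m\bigr)\bigr)} = \binom{n}{t} - \shad[n-t]^{n-s}\bigl({\textstyle\binom{n}{s}}-m\bigr),
\]
as required. I do not expect any serious obstacle: the whole argument reduces to the bookkeeping identity above together with the order-reversing property of set complementation on colex, both of which are routine. The only mild care needed is that $m \le \binom{n}{s}$ and $s \le t \le n$ guarantee that $\C[s](m)$ and $\C[n-s]\bigl(\binom{n}{s}-m\bigr)$ both fit inside $\nlayer{n}{s}$ and $\nlayer{n}{n-s}$ respectively, and that $n-t \le n-s$ so $\shad[n-t]^{n-s}$ is well-defined.
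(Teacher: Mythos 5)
Your proof is correct and is essentially the paper's own argument: the paper proves the identity by the same complementation duality, writing $\K(\C(m))$ as the $t$-sets containing no non-edge, identifying the complemented family of non-edges with $\C[n-s]\bigl(\binom{n}{s}-m\bigr)$ via the order-reversal of colex under $E\mapsto[n]\setminus E$, and converting clique membership into shadow membership exactly as you do. The only difference is presentational: you prove the pointwise counting identity and the order-reversal inline, whereas the paper delegates them to its remark on the retlex order and to the upshadow computation in the appendix proof of the Kruskal--Katona theorem.
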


\subsection{Cascade notation}
The standard way of describing initial segments of the colex order is \emph{cascade} notation, introduced by Kruskal in \cite{K63}. A good reference for the material in this subsection is Chapter 6 of the book \cite{FT18} by Frankl and Tokushige.

\begin{defn}
    We will say that an integer sequence $(n_s,n_{s-1},\dots,n_{s-\ell+1})$ is a $\emph{cascade}$ if it is strictly decreasing. We will define, for $s\ge 1$ and arbitrary cascades $(n_s,n_{s-1},\dots,n_{s-\ell+1})$ of length $\ell\ge 0$,
    \[
        \is = \sum_{k=0}^{\ell-1} \binom{n_{s-k}}{s-k}.
    \]
    We say that a cascade is a \emph{strict $s$-cascade} if $n_{s-k} \ge s-k$ for all $0\le k\le \ell-1$, and also $\ell\le s$. In that case every term in (the sum defining) $\is$ is positive. 
\end{defn}

\begin{rem} \label{rem:strict}
	In checking that a cascade $(n_s,n_{s-1},\dots,n_{s-\ell+1})$ is strict it is sufficient to check that $n_{s-k} \ge s-k$ for $k=\ell-1$, because if so then for every $k<\ell-1$ we have 
	\[
		n_{s-k} \ge n_{s-\ell+1} + (\ell-1-k) \ge s-\ell+1 + (\ell-1-k) = s-k.
	\]
\end{rem}
\begin{defn}
    If $\B$ is a family of sets, each disjoint from a fixed set $A$, we write $A+\B$ for the family
    \[
        A+\B = \setof{A\cup B}{B\in \B}.
    \]
\end{defn}

\begin{lem}\label{lem:uniquescascade}
    For all $m\ge 0$ and all $s\ge 1$ there exists a unique strict $s$-cascade such that $m=\is$. Indeed $\seq$ is the unique strictly decreasing sequence of length $\ell \ge 0$ satisfying
    \begin{align*}
            \binom{n_s}s                        &< m < \binom{n_s+1}{s} \\
            \binom{n_s}s + \binom{n_{s-1}}{s-1} &< m < \binom{n_s}s + \binom{n_{s-1}+1}{s-1} \\
                                                &\vdots & \\
            \binom{n_s}s + \binom{n_{s-1}}{s-1} + \cdots + \binom{n_{s-\ell+2}}{s-\ell+2} & < m <
            \binom{n_s}s + \binom{n_{s-1}}{s-1} + \cdots + \binom{n_{s-\ell+2}+1}{s-\ell+2}  \\
            \binom{n_s}s + \binom{n_{s-1}}{s-1} + \cdots + \binom{n_{s-\ell+1}}{s-\ell+1} & = m.
    \end{align*}
    
    If $\seq$ has length $1$ then the first of these inequalities is satisfied with equality on the left. If $m=0$ then we get the unique sequence of length $0$ for all $s\ge 1$. Moreover, for all $m\ge 0$ and $s\ge 1$ the colex initial segment of $\layer$ of length $m$ is
    \[
        \C(m) = \union_{k=0}^{\ell-1} \parens[\Bigg]{ \setof{n_{s-j}+1}{0\le j<k} + \nlayer[n_{s-k}]{s-k} }
    \]
    where $\seq$ is the unique $s$-cascade such that $m = \is$.
\end{lem}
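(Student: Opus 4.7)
My plan is to prove the three assertions in turn: existence of the strict $s$-cascade, uniqueness together with the inequality characterization, and the colex initial segment formula.

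\textbf{Existence} is by greedy construction. Given $m \ge 1$, let $n_s$ be the largest integer with $\binom{n_s}{s} \le m$; if $m = \binom{n_s}{s}$ stop, otherwise iterate at level $s-1$ with the remainder $m - \binom{n_s}{s}$. The level strictly decreases each step, so at most $s$ iterations occur. Maximality of $n_{s-k}$ at stage $k$ gives $m_k < \binom{n_{s-k}+1}{s-k}$, whence the next remainder satisfies
\[
    m_{k+1} = m_k - \binom{n_{s-k}}{s-k} < \binom{n_{s-k}+1}{s-k} - \binom{n_{s-k}}{s-k} = \binom{n_{s-k}}{s-k-1},
\]
forcing $n_{s-k-1} < n_{s-k}$. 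At the final stage, $\binom{n_{s-\ell+1}}{s-\ell+1} = m_{\ell-1} \ge 1$ forces $n_{s-\ell+1} \ge s-\ell+1$, and \cref{rem:strict} upgrades this to strictness of the whole cascade.

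\textbf{Uniqueness and the inequality characterization} reduce to the bound $\is \le \binom{n_s+1}{s} - 1$ for every strict $s$-cascade $(n_s,\dots,n_{s-\ell+1})$. I would prove this by induction on $s$: with first term $n_s$ fixed, the sum is maximized by taking $n_{s-k} = n_s - k$ for all $k$ (so $\ell = s$), and iterated Pascal's rule yields the telescoping identity $\sum_{k=0}^{s-1}\binom{n_s-k}{s-k} = \binom{n_s+1}{s} - 1$. Given this bound, $n_s$ in any strict $s$-cascade for $m$ must equal the unique integer with $\binom{n_s}{s} \le m < \binom{n_s+1}{s}$, which agrees with the greedy choice; iterating at level $s-1$ on the remainder $m - \binom{n_s}{s}$ determines each subsequent $n_{s-k}$. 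This gives uniqueness of the strict cascade and simultaneously shows that the greedy sequence is the unique strictly decreasing sequence satisfying the listed inequalities.

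\textbf{The colex initial segment formula} follows by induction on $s$. The driving observation is that in colex order on $\binom{\N}{s}$ any edge containing $n_s+1$ strictly exceeds every $s$-subset of $[n_s]$, so the first $\binom{n_s}{s}$ edges of $\C(m)$ are precisely $\binom{[n_s]}{s}$, accounting for the $k=0$ summand. The remaining $m - \binom{n_s}{s}$ edges all contain $n_s+1$, and the map $E \mapsto E \setminus \{n_s+1\}$ is a colex-order-preserving bijection identifying them with the colex initial segment of length $m - \binom{n_s}{s}$ in $\binom{\N}{s-1}$. Applying the inductive hypothesis with the $(s-1)$-cascade $(n_{s-1},\dots,n_{s-\ell+1})$ representing $m - \binom{n_s}{s}$ and adjoining $n_s+1$ to every resulting edge recovers the remaining $k = 1,\dots,\ell-1$ summands. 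The principal obstacle is purely bookkeeping, namely aligning the index shift when invoking induction at level $s-1$ with the displayed union; no conceptual difficulty arises beyond iterated Pascal.
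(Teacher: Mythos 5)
Your proof is correct, and since the paper gives no proof of this lemma at all — it is presented as standard cascade material, with the reader referred to Chapter 6 of Frankl and Tokushige \cite{FT18} — there is nothing in the paper itself to diverge from. Your three steps (greedy maximal choice of each $n_{s-k}$ for existence, the telescoping Pascal bound $\binom{n_s}{s}+\binom{n_s-1}{s-1}+\cdots+\binom{n_s-s+1}{1} = \binom{n_s+1}{s}-1$ to pin down $n_s$ and hence uniqueness by recursion on the remainder, and induction on $s$ using that every edge of $\mathcal{C}_s(m)$ beyond the initial block $\binom{[n_s]}{s}$ contains $n_s+1$ and that deleting $n_s+1$ is colex-order-preserving) are exactly the standard argument that reference supplies, and the small points needing care (termination within $s$ steps, strictness via \cref{rem:strict}, and the identification of the tail cascade with the cascade of the remainder) are all handled.
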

	
\begin{defn}
	For all $m \ge 0$ and all $s \ge 1$, we denote by $i_s(m)$ the unique $s$-cascade such that $m = \is$, guaranteed by \cref{lem:uniquescascade}.

\end{defn}

Using cascade notation, we can exhibit lovely expressions for the number of cliques and the size of the shadow of a colex initial segment.
\begin{lem}\label{lem:facts}
	If $\seq$ is a strict $s$-cascade and $m=\is$ then
	\begin{align*}
		\k_s(m) = \k(\C(m))              &= \is[t], \text{ and} \\[1.5ex]
		\shad^s(m)= \abs{\shad(\C(m))}   &= \is[q].
	\end{align*} 
\end{lem}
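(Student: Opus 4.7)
The plan is to prove both identities simultaneously by induction on the cascade length $\ell$, using the explicit description of $\C(m)$ from \cref{lem:uniquescascade} to peel off the top-level piece. Separating out the $k = 0$ summand in that union and reindexing the rest by $k \mapsto k - 1$, one obtains the decomposition
\[
    \C(m) = \binom{[n_s]}{s} \cup \bigl(\{n_s+1\} + \C[s-1](m')\bigr),
\]
where $m' \defeq m - \binom{n_s}{s}$ corresponds to the strict $(s-1)$-cascade $(n_{s-1}, n_{s-2}, \dots, n_{s-\ell+1})$ of length $\ell - 1$ (strictness via \cref{rem:strict}). The two pieces are disjoint, since only the second contains $n_s + 1$. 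The base case $\ell = 1$ is immediate: $\C(m) = \binom{[n_s]}{s}$ has $q$-shadow $\binom{[n_s]}{q}$ and $t$-clique set $\binom{[n_s]}{t}$, giving exactly $\is[q] = \binom{n_s}{q}$ and $\is[t] = \binom{n_s}{t}$.

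For the shadow identity in the inductive step, I would split $\shad[q](\C(m))$ according to whether the $q$-set contains $n_s + 1$. A $q$-set omitting $n_s + 1$ is a shadow element iff it lies in $\binom{[n_s]}{q}$, because any $q$-subset of $[n_s]$ extends to an $s$-subset in $\binom{[n_s]}{s}$, while conversely any $q$-subset of an edge of the second piece that omits $n_s + 1$ already lies in $[n_s]$ (the vertex set of $\C[s-1](m')$ is contained in $[n_s]$). A $q$-set containing $n_s + 1$ is a shadow element iff removing $n_s + 1$ leaves a $(q-1)$-shadow element of $\C[s-1](m')$. Summing the two cases and applying the inductive hypothesis yields
\[
    \abs{\shad[q](\C(m))} = \binom{n_s}{q} + [n_{s-1}, \dots, n_{s-\ell+1}]_{q-1} = \is[q].
\]

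The clique identity follows from the same dichotomy. A $t$-set $C$ omitting $n_s + 1$ is a $t$-clique iff all its $s$-subsets lie in $\binom{[n_s]}{s}$, equivalently iff $C \in \binom{[n_s]}{t}$. A $t$-set $C = \{n_s + 1\} \cup C'$ is a $t$-clique iff (a) the $s$-subsets of $C$ containing $n_s + 1$, namely $\{n_s + 1\} + \binom{C'}{s-1}$, lie in $\C(m)$, which by the decomposition is equivalent to $\binom{C'}{s-1} \subseteq \C[s-1](m')$; and (b) the remaining $s$-subsets $\binom{C'}{s}$ lie in $\binom{[n_s]}{s}$, which is automatic since the vertex set of $\C[s-1](m')$ is contained in $[n_s]$. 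So these $t$-cliques correspond bijectively (via $C \mapsto C \setminus \{n_s + 1\}$) to the $(t-1)$-cliques of $\C[s-1](m')$, and the inductive hypothesis gives $\binom{n_s}{t} + [n_{s-1}, \dots, n_{s-\ell+1}]_{t-1} = \is[t]$.

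The only substantive work is justifying the decomposition of $\C(m)$ above, which is a direct rearrangement of the formula from \cref{lem:uniquescascade}; the rest is bookkeeping with a convenient bijection. I do not foresee significant obstacles beyond handling the trivial edge cases $q = 0$ (where $\shad[0](\C(m)) = \{\emptyset\}$ and $\is[0] = 1$) and $t = s$ (where both sides equal $m$).
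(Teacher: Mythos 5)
Your proof is correct, and it is essentially the argument the paper has in mind: the paper dismisses this lemma as ``straightforward,'' citing Frankl--Tokushige for the shadow case, and the standard proof there is exactly your induction that peels off the top cascade term via the decomposition $\C(m) = \binom{[n_s]}{s} \cup (\{n_s+1\} + \C[s-1](m'))$. Your handling of the edge cases ($q=0$, $t=s$, and the fact that condition (b) in the clique step follows from (a) because the vertex set of $\C[s-1](m')$ lies in $[n_s]$) is what makes the ``straightforward'' claim rigorous, including the paper's remark that the resulting $t$- and $q$-cascades need not be strict (some binomial coefficients may vanish).
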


\begin{proof}
Straightforward. See \cite{FT18} for a proof of the shadow case when $q=s-1$. The general shadow result and the proof for cliques are similar. Note that neither the $t$-cascade nor the $q$-cascade need be strict.
\end{proof}

\subsection{Lov\'asz Kruskal-Katona}
\label{sub:approximate_kruskal_katona}

Cascades have the merit of giving the precise values of $\shad^s(m)$ and $\k_s(m)$, but are somewhat unwieldy to work with. There is a simpler form of the Kruskal-Katona theorem, due to Lov\'asz \cite{LovBook},  that is often strong enough. We work with the natural polynomial generalization of the binomial coefficient $\binom{n}{k}$ to real values of $n$. 

\begin{defn}
	For a real number $x$ and natural number $k$, the generalized binomial coefficient is defined as $\binom{x}{k} = (x)(x-1)\cdots(x-k+1)/k!$. Note that $\binom{x}k$ is strictly increasing for $x\ge k-1$ and all $y\ge 0$ can be represented in the form $y=\binom{x}k$ for some $x\ge k-1$. 
\end{defn}

\begin{lem}[Lov\'asz \cite{LovBook}]\label{lem:LKK}
	Let $\H$ be an $r$-graph. If $\abs{\H}=\binom{u}{r}$, where $u \ge r$ is real, then $\abs{\shad[k](\H)} \ge \binom{u}{k}$ for all $k \in [r]$.
\end{lem}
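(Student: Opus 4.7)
The plan is to derive this polynomial version from the cascade form of \cref{thm:kkt} together with an arithmetic inequality about generalized binomial coefficients. I would first reduce to the case $k = r-1$ by induction on $r$: the base $r = 1$ is immediate, and for the inductive step the case $k = r$ is trivial while for $k < r - 1$ it suffices to prove the case $k = r - 1$. Indeed, once $|\shad[r-1](\H)| \ge \binom{u}{r-1}$ is established, writing this lower bound as $\binom{u'}{r-1}$ with $u' \ge u$ and applying the inductive hypothesis to the $(r-1)$-graph $\shad[r-1](\H)$ yields $|\shad[k](\H)| = |\shad[k](\shad[r-1](\H))| \ge \binom{u'}{k} \ge \binom{u}{k}$, where the final inequality uses monotonicity of $\binom{x}{k}$ for $x \ge k$.

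For the main case $k = r - 1$, I would apply \cref{thm:kkt} to reduce to the colex initial segment, then use \cref{lem:facts} to express the resulting shadow size as $[n_r, n_{r-1}, \ldots, n_{r-\ell+1}]_{r-1}$, where $[n_r, \ldots, n_{r-\ell+1}]_r = m = \binom{u}{r}$ is the cascade representation guaranteed by \cref{lem:uniquescascade}. The problem thus reduces to the arithmetic claim $[n_r, \ldots, n_{r-\ell+1}]_{r-1} \ge \binom{u}{r-1}$, which I would prove by induction on the cascade length $\ell$. The base $\ell = 1$ gives $u = n_r$ with equality. For $\ell \ge 2$, peel off the leading term by writing $\binom{u}{r} = \binom{n_r}{r} + \binom{v}{r-1}$, where $\binom{v}{r-1} = [n_{r-1}, \ldots, n_{r-\ell+1}]_{r-1}$ and $v < n_r$; the inductive hypothesis applied to the tail cascade of length $\ell - 1$ yields $[n_{r-1}, \ldots, n_{r-\ell+1}]_{r-2} \ge \binom{v}{r-2}$, so it remains to establish the two-parameter inequality
\[
    \binom{n_r}{r-1} + \binom{v}{r-2} \ge \binom{u}{r-1} \quad \text{whenever} \quad \binom{n_r}{r} + \binom{v}{r-1} = \binom{u}{r}.
\]

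The main obstacle will be verifying this last inequality. Encouragingly, both endpoints give equality: at $v = n_r$ one has $u = n_r + 1$, and Pascal's rule supplies $\binom{n_r + 1}{r-1} = \binom{n_r}{r-1} + \binom{n_r}{r-2}$, while at $v = r - 2$ both $\binom{v}{r-1} = 0$ and the inequality collapse to $\binom{n_r}{r-1} \ge \binom{n_r}{r-1}$. To fill in the interior, I would show that the function $F(v) = \binom{n_r}{r-1} + \binom{v}{r-2} - \binom{u(v)}{r-1}$ stays nonnegative on $[r-2, n_r]$, where $u(v)$ is determined implicitly by the constraint. Implicit differentiation produces a clean expression for $F'(v)$ in terms of derivatives of generalized binomial coefficients, whose sign can be controlled using the elementary identity $\binom{x}{k+1}/\binom{x}{k} = (x-k)/(k+1)$ combined with the bounds $v < n_r \le u \le n_r + 1$. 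This yields the required monotonicity of $F$, which together with the boundary equalities closes the argument.
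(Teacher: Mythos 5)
The paper itself gives no proof of this lemma (it is quoted from Lov\'asz \cite{LovBook}), so the only issue is whether your argument stands on its own. Your outer reduction to $k=r-1$ (iterating shadows and using monotonicity of $x\mapsto\binom{x}{k}$), and the reduction via \cref{thm:kkt}, \cref{lem:uniquescascade} and \cref{lem:facts} to the arithmetic claim $[n_r,\dots,n_{r-\ell+1}]_{r-1}\ge\binom{u}{r-1}$, followed by induction on cascade length, are all sound, and the two-parameter inequality you isolate is indeed what remains. The gap is precisely at that crux: your plan to prove it by showing $F(v)=\binom{n_r}{r-1}+\binom{v}{r-2}-\binom{u(v)}{r-1}$ is monotone via sign control of $F'$ cannot work, because $F$ is genuinely not monotone on the relevant interval. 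Concretely, for $r=3$, $n_r=5$ one gets $F(1)=1$, $F(2)\approx 1.43$, $F(2.5)\approx 1.46$, $F(3)\approx 1.36$, $F(4)\approx 0.86$, $F(5)=0$: the function rises and then falls, so $F'$ changes sign in the interior and no uniform bound of the kind you describe exists. Note also that your endpoint analysis is internally inconsistent: at $v=r-2$ the term $\binom{r-2}{r-2}=1$ survives, so $F(r-2)=1$ rather than $0$; and if $F$ really were monotone with equality at both ends it would be identically zero, which is false. So the decisive inequality is asserted, not proved.

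The inequality itself is true, so your cascade route can be completed, but it needs a different finishing argument than monotonicity --- for example, showing that $F'$ changes sign at most once (from positive to negative), so that the minimum of $F$ on $[r-2,n_r]$ is attained at an endpoint, or a concavity argument comparing $\binom{x}{r-1}$ as a function of $\binom{x}{r}$. Alternatively, the classical proof of the lemma avoids cascades entirely: one argues by induction on $r$ and $\abs{\H}$, splitting $\H$ at a suitable vertex into its link and the edges missing that vertex, and using $\binom{u}{r}=\binom{u-1}{r}+\binom{u-1}{r-1}$ together with monotonicity of the generalized binomial coefficients. As written, your proposal has a genuine gap at its main step.
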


The clique version of this result is a straightforward consequence.

\begin{thm}\label{thm:LKK}
	Let $s, t \in \N$ with $t \ge s$. Let $\H$ be an $s$-graph with $\abs{\H} = \binom{x}{s}$, where $x \ge s-1$ is real. Then if $x<t$ we have $\k(\H)=0$ and otherwise $\k(\H) \le \binom{x}{t}$.
\end{thm}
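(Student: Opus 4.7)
The plan is to apply the shadow version of Lov\'asz Kruskal-Katona (\cref{lem:LKK}) to the $t$-graph of $t$-cliques of $\H$, exploiting the obvious fact that the $s$-shadow of this $t$-graph sits inside $\H$.

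First I would dispose of the easy case $x<t$. A single $t$-clique in $\H$ would force $\abs{\H} \ge \binom{t}s$. But $\abs{\H} = \binom{x}{s}$, and since $\binom{\cdot}{s}$ is strictly increasing on $[s-1,\infty)$ (and $s-1 \le x < t$), we have $\binom{x}{s} < \binom{t}{s}$, a contradiction. Hence $\k(\H)=0$.

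For the main case $x\ge t$, set $K = \k(\H)$. If $K = 0$ we are done, so assume $K\ge 1$ and write $K = \binom{y}{t}$ for a unique real $y \ge t-1$ (this is possible because $\binom{\cdot}{t}$ is strictly increasing on $[t-1,\infty)$ and sweeps out $[0,\infty)$). Consider now the $t$-graph $\K(\H)$ of all $t$-cliques of $\H$; it has $\binom{y}{t}$ edges, and every $s$-subset of a $t$-clique is an edge of $\H$, so
\[
    \shad[s](\K(\H)) \of \H.
\]
Applying \cref{lem:LKK} to $\K(\H)$ with $r=t$ and $k=s$ gives
\[
    \binom{y}{s} \le \abs[\big]{\shad[s](\K(\H))} \le \abs{\H} = \binom{x}{s}.
\]
Since both $y$ and $x$ lie in $[s-1,\infty)$ and $\binom{\cdot}s$ is strictly increasing there, we conclude $y \le x$. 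Then by monotonicity of $\binom{\cdot}{t}$ on $[t-1,\infty)$ (using $y \ge t-1$ and $x\ge t$),
\[
    \k(\H) = \binom{y}{t} \le \binom{x}{t},
\]
as desired.

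There isn't really a hard step here; the only things to be careful about are checking that the real parameter $y$ exists with $y\ge t-1$ (so that both monotonicity comparisons are in the valid range), and noting that \cref{lem:LKK} as stated is exactly what converts the edge-count bound $\binom{y}{s}\le \binom{x}{s}$ into information we want. No reference to cascades, colex, or \cref{lem:ktcolex} is needed for this argument; the result follows directly from \cref{lem:LKK} applied to $\K(\H)$.
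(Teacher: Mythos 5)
Your proof is correct and follows essentially the same route as the paper's: both apply \cref{lem:LKK} to the $t$-graph $\K(\H)$ of $t$-cliques, use that its $s$-shadow lies inside $\H$ to get $\binom{y}{s}\le\binom{x}{s}$, and then conclude $y\le x$ by monotonicity. Your version is marginally more careful in disposing of the $\k(\H)=0$ case separately before writing $\k(\H)=\binom{y}{t}$ with $y\ge t-1$, whereas the paper just asserts the parameter is $\ge t$; this is a cosmetic difference, not a substantive one.
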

\begin{proof}
	If $x<t$ then $\abs{\H} < \binom{t}s$ and in particular $\H$ does not have enough edges to contain a $t$-clique, i.e., $\k(\H)=0$. If $x\ge t$, then let $\mathcal{T} = \K(\H)$, so $\mathcal{T}$ is a $t$-graph. We define $u\ge t$ by $\abs{\mathcal{T}}=\binom{u}{t}$. By \cref{lem:LKK}, the number of $s$-sets (edges of $\H$) contained in edges of $\mathcal{T}$ ($t$-cliques of $\H$) is at least $\binom{u}{s}$. The number of edges of $\H$ contained in $t$-cliques of $\H$ is at most the number of edges of $\H$, so we have $\binom{x}{s} = \abs{\H} \ge \binom{u}{s}$. Since $\binom{x}{s}$ is strictly increasing in $x$ for $x\ge s-1$ we must have $x \ge u\ge t$, so $\k(\H) = \abs{\mathcal{T}} = \binom{u}{t} \le \binom{x}{t}$.
\end{proof}

\section{Signpost Results for Hypergraphs}
\label{sec:signpost}

In this section we prove ``signpost'' versions of Theorems \ref{thm:chase} and \ref{thm:cc} for hypergraphs. We solve three related problems, fixing the numbers of vertices, edges, and cliques. For each problem we prove an upper bound on the number of $t$-cliques.

\subsection{Hypergraphs with a fixed number of vertices}

We start with a bound on the number of $t$-cliques in an $s$-graph on $n$ vertices with maximum degree at most $\D$. The argument bounds the number of cliques that can contain a fixed $i$-set $I$, and deduces a bound on the total number of $t$-cliques.

\begin{thm}\label{thm:Q2gen}
	Let $1 \le i < s$ and suppose that $\H$ is an $s$-graph on $n$ vertices such that $\D_i(\H) \le \D$. Then
	\[
		\k[t](\H) \le \binom{n}i \frac{\k[t-i]_{s-i}(\D)}{\binom{t}i} .
	\]
	If equality holds then for each $I\in \nlayer{i}$ the neighborhood $\H(I)$ contains $\k[t-i]_{s-i}(\D)$ $(t-i)$-cliques.
\end{thm}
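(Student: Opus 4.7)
The plan is to double-count pairs $(I, C)$ where $I$ is an $i$-set, $C$ is a $t$-clique of $\H$, and $I \subseteq C$. Counting by $C$ gives $\binom{t}{i}\k[t](\H)$, since each $t$-clique contains exactly $\binom{t}{i}$ $i$-subsets. Counting by $I$ requires bounding, for each fixed $I$, the number of $t$-cliques containing $I$.

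The key observation is that if $C$ is a $t$-clique of $\H$ containing an $i$-set $I$, then $C\setminus I$ is a $(t-i)$-clique of the neighborhood $\H(I)$. Indeed, every $(s-i)$-subset $F$ of $C\setminus I$ satisfies $I\cup F\in\binom{C}{s}\subseteq\H$, so $F\in\H(I)$. The map $C\mapsto C\setminus I$ is injective, so the number of $t$-cliques of $\H$ containing $I$ is at most $\k[t-i](\H(I))$. Since $\H(I)$ is an $(s-i)$-graph with $d_\H(I)\le\D$ edges, Kruskal--Katona (\cref{thm:kkt}), together with the easy fact that $\k[t-i]_{s-i}(m)$ is nondecreasing in $m$, gives
\[
    \k[t-i](\H(I)) \le \k[t-i]_{s-i}(d_\H(I)) \le \k[t-i]_{s-i}(\D).
\]

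Combining these bounds,
\[
    \binom{t}{i}\k[t](\H) \;=\; \sum_{I\in\nlayer{i}} \abs[\big]{\{C\in\K[t](\H) : I\subseteq C\}} \;\le\; \sum_{I\in\nlayer{i}} \k[t-i]_{s-i}(\D) \;=\; \binom{n}{i}\k[t-i]_{s-i}(\D),
\]
which rearranges to the desired inequality. For the equality clause: the pair-counting step is an identity, so any slack occurs either in the injection (some $(t-i)$-clique of $\H(I)$ failing to come from a $t$-clique of $\H$ containing $I$) or in the Kruskal--Katona bound. If equality holds in the theorem, then equality must hold for every $I$ simultaneously in the chain $\abs{\{C:I\subseteq C\in\K[t](\H)\}} \le \k[t-i](\H(I)) \le \k[t-i]_{s-i}(\D)$, which forces $\k[t-i](\H(I)) = \k[t-i]_{s-i}(\D)$ for every $I\in\nlayer{i}$, as claimed.

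There is essentially no real obstacle here: the argument is a clean double count plus an appeal to \cref{thm:kkt}. The only minor point to be careful about is recording that the clique version of Kruskal--Katona, stated in \cref{thm:kkt} for a fixed size $m=|\A|$, gives monotone bounds in $m$, so that we may bound $\k[t-i](\H(I))$ in terms of the uniform upper bound $\D$ rather than the actual degree $d_\H(I)$.
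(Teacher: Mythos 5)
Your proposal is correct and is essentially the same argument as the paper's: a double count of pairs $(I,C)$ with $I\subseteq C\in\K[t](\H)$, bounding the cliques through each neighborhood $\H(I)$ by the clique form of Kruskal--Katona, with the same equality analysis. The only difference is cosmetic: you explicitly record the (easy) monotonicity of $m\mapsto \k[t-i]_{s-i}(m)$ needed to pass from $d_\H(I)$ to $\D$, which the paper leaves implicit.
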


\begin{proof}
	We count pairs $(I,K)$ where $I\in \nlayer{i}$, $K\in \K[t](\H)$, and $I\of K$. Counting by $t$-cliques in $\H$ we have a total of $\binom{t}i \, \k[t](\H)$. On the other hand consider $I\in \nlayer{i}$. For cliques $K$ that contain $I$ all $s$-sets $E$ such that $I\of E \of K$ must be in $\H$. Thus $\abs{\setof{K}{I\of K\in \K[t](\H)}} \le \k[t-i]\parens[\big]{\H(I)}$. Since by hypothesis $\abs{\H(I)}=d_\H(I) \le \D $ we have 
	\[
		\abs{\setof{K}{I\of K\in \K[t](\H)}} \le \k[t-i]\parens[\big]{\H(I)} \le \k[t-i]_{s-i}\parens{ \D}
	\]
	by \cref{thm:kkt}. Thus, summarizing, we have 
	\begin{align*}
		\binom{t}i \, \k[t](\H) &\le \binom{n}i \k[t-i]_{s-i}(\D) \label{} \\
					  \k[t](\H) &\le \binom{n}i \frac{\k[t-i]_{s-i}(\D) }{\binom{t}{i}} . 
	\end{align*}
	If we have equality then $\k[t-i]\parens[\big]{\H(I)} = \k[t-i]_{s-i}\parens{ \D}$ for every $I\in \nlayer{i}$.
\end{proof}

From this result the following corollary is immediate from our known bounds on $\k[t-i]_{s-i}$.

\begin{cor}\label{cor:explicit}
	Let $1 \le i < s $ and suppose that $\H$ is an $s$-graph on $n$ vertices such that $\D_i(\H) \le \D$.
	\begin{enumerate}[a),thmparts]
		\item If the cascade representation of $\D$ is given by $\is[s-i][s-i]$ then 
		\[
			\k[t](\H) \le \binom{n}i \frac{\is[t-i][s-i]}{\binom{t}i}.
		\]
		\item \label{part:approx} If $\D = \binom{x-i}{s-i}$ for some (not necessarily integral) $x\ge s$ then we have
		\[
			\k[t](\H) \le \binom{n}i \frac{\binom{x-i}{t-i}}{\binom{t}i} = \binom{n}i \frac{\binom{x}{t}}{\binom{x}i},
		\]
        if $x\ge t$ and $\k[t](\H)=0$ for $s\le x < t $.
	\end{enumerate}
\end{cor}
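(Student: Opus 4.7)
The plan is to apply \cref{thm:Q2gen} and then evaluate or upper-bound $\k[t-i]_{s-i}(\D)$ in each of the two regimes for $\D$. \Cref{thm:Q2gen} already supplies
\[
    \k[t](\H) \le \binom{n}i \frac{\k[t-i]_{s-i}(\D)}{\binom{t}i},
\]
so the task reduces to controlling the number of $(t-i)$-cliques in the colex $(s-i)$-graph on $\D$ edges.

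For part (a), I would invoke \cref{lem:facts} with $s$ and $t$ replaced by $s-i$ and $t-i$. The hypothesis presents $\D$ in strict $(s-i)$-cascade form (strictness being automatic by \cref{lem:uniquescascade}), so the lemma yields the exact value $\k[t-i]_{s-i}(\D) = \is[t-i][s-i]$. Substituting into the bound above gives the claim.

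For part (b), I would apply \cref{thm:LKK} to the colex $(s-i)$-graph $\C[s-i](\D)$, which has $\D = \binom{x-i}{s-i}$ edges with $x-i\ge s-i$. The theorem gives $\k[t-i]_{s-i}(\D)=0$ when $x<t$ and $\k[t-i]_{s-i}(\D)\le \binom{x-i}{t-i}$ when $x\ge t$. Plugging these into \cref{thm:Q2gen} delivers both the vanishing case and the stated upper bound. The alternative form $\binom{n}i \binom{x}t/\binom{x}i$ then follows from the elementary identity $\binom{x-i}{t-i}\binom{x}i = \binom{x}t \binom{t}i$, which I would verify by expanding factorials (or interpret combinatorially as two ways of counting flags $I\of K$ with $\abs{I}=i$, $\abs{K}=t$ inside an $x$-set).

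There is essentially no obstacle: the corollary is a substitution exercise combining \cref{thm:Q2gen} with the standard evaluations of $\k_{s-i}$ from \cref{lem:facts} (cascade form) and \cref{thm:LKK} (Lov\'asz form). The only items requiring any care are confirming the cascade is automatically strict in part (a) and splitting off the degenerate range $s\le x<t$ in part (b).
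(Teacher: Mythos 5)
Your proposal is correct and follows exactly the paper's route: the paper proves the corollary by combining \cref{thm:Q2gen} with \cref{lem:facts} for part (a) and \cref{thm:LKK} for part (b), which is precisely your substitution argument (your extra verification of the identity $\binom{x-i}{t-i}\binom{x}{i}=\binom{x}{t}\binom{t}{i}$ and the strictness remark are just details the paper leaves implicit).
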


\begin{proof}
	The two parts follow from \cref{thm:Q2gen} together with \cref{lem:facts} and \cref{thm:LKK} respectively. 
\end{proof}

\subsection{Hypergraphs with a fixed number of edges}

We switch now to considering hypergraphs with a fixed number of edges.

We write $\K_{\H}(E)$ for the set of $t$-cliques in $\H$ containing the edge $E$ and $\k_{\H}(E)$ for $\abs{\K_{\H}(E)}$.

\begin{lem}\label{lem:scliquesum}
	For any $s$-graph $\H$ and $t \ge s$,
	\[
	\k(\H)\binom{t}{s} = \sum_{E \in \H}\k_\H(E).
	\]
\end{lem}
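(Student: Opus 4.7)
The statement is a standard double-counting identity, so my plan is to count in two ways the set
\[
S = \setof[\big]{(E,K)}{E\in\H,\ K\in\K(\H),\ E\of K}.
\]

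First I would fix a $t$-clique $K\in\K(\H)$ and count the edges $E\in\H$ with $E\of K$. By the definition of a clique on level $t$ (given earlier: $\binom{C}{s}\of\H$), every $s$-subset of $K$ is an edge of $\H$, so the number of such $E$ is exactly $\binom{t}{s}$. Summing over $K$ gives $\abs{S} = \k(\H)\binom{t}{s}$.

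Next I would fix an edge $E\in\H$ and count the cliques $K\in\K(\H)$ with $E\of K$. By definition, this count is $\k_\H(E)$. Summing over $E\in\H$ gives $\abs{S} = \sum_{E\in\H}\k_\H(E)$. Equating the two expressions for $\abs{S}$ yields the identity.

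There is no real obstacle: the only subtlety is the first count, where one must invoke the hypergraph clique definition to conclude that \emph{every} $s$-subset of a $t$-clique is an edge (not merely at least one). Once that is noted, the argument is immediate and the proof can be written in just a few lines.
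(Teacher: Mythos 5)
Your proof is correct and is exactly the double-counting argument the paper uses (the paper's proof is the one-line instruction ``Count the pairs $(E,K)$, where $E \of K \in \K(\H)$, in two ways''). You have simply spelled out the two counts, and you correctly identified the one point worth noting, that every $s$-subset of a $t$-clique is an edge.
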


\begin{proof}
	Count the pairs $(E,K)$, where $E \of K \in \K(\H)$, in two ways.
\end{proof}

\begin{lem}\label{lem:nbhd}
Let $\H$ be an $s$-graph containing an edge $E \in \H$, and let $I \subsetneq E$ with $\abs{I} = i$. Let $K$ be a $t$-clique of $\H$ containing $E$. Then $K\wo I$ is a $(t-i)$-clique in $\H(I)$, and $\k_{\H}(E) \le k^{t-i}_{\H(I)} (E\wo I)$.
\end{lem}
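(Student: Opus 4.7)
The plan is to verify the two assertions in sequence: first that $K \setminus I$ is indeed a $(t-i)$-clique in $\H(I)$, and then to deduce the inequality by an injective map from the set of $t$-cliques of $\H$ containing $E$ into the $(t-i)$-cliques of $\H(I)$ containing $E\setminus I$.

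For the first assertion, I would unpack the definitions. Since $I \subsetneq E \subseteq K$, the set $K \setminus I$ has size $t-i$, and it is disjoint from $I$. To show it is a $(t-i)$-clique in the $(s-i)$-graph $\H(I)$, I would take an arbitrary $(s-i)$-subset $F \subseteq K \setminus I$ and observe that $I \cup F$ is an $s$-subset of $K$; since $K$ is a $t$-clique in $\H$, we have $I \cup F \in \cE(\H)$, so by definition of $\H(I)$ we get $F \in \cE(\H(I))$. I would also note briefly that every vertex of $K \setminus I$ lies in such an $F$ (since $t - i \ge s - i$), so all vertices of $K \setminus I$ belong to the vertex set of $\H(I)$, as required by the definition of neighborhood.

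For the inequality, I would define the map $\varphi : \K_\H(E) \to \{(t-i)\text{-cliques of } \H(I)\text{ containing } E \setminus I\}$ by $\varphi(K) = K \setminus I$. The first part of the lemma shows $\varphi(K)$ is a $(t-i)$-clique in $\H(I)$, and since $E \setminus I \subseteq K \setminus I$, the image does contain $E \setminus I$. Injectivity is immediate because $I \subseteq E \subseteq K$ means $K$ is recovered from $\varphi(K)$ by $K = \varphi(K) \cup I$. Since $\varphi$ is an injection into a set of size $k^{t-i}_{\H(I)}(E \setminus I)$, the bound follows.

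I do not anticipate any genuine obstacle; the argument is pure bookkeeping with the definitions of neighborhood and clique. The only subtlety worth explicitly acknowledging is that the map $\varphi$ is not in general surjective: a $(t-i)$-clique $K'$ of $\H(I)$ containing $E \setminus I$ need not give a $t$-clique $K' \cup I$ of $\H$, because cliqueness in $\H(I)$ only guarantees that $s$-subsets of $K' \cup I$ which contain $I$ are edges of $\H$, while $s$-subsets meeting $I$ in a proper subset are not controlled. This is exactly why the conclusion is an inequality rather than equality, and I would mention it at the end to explain the shape of the statement.
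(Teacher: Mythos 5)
Your proposal is correct and follows essentially the same route as the paper's proof: the map $K \mapsto K\wo I$, verified to land in $\K[t-i]_{\H(I)}(E\wo I)$ by checking that each $(s-i)$-subset $F$ of $K\wo I$ satisfies $F\cup I \in \binom{K}{s} \subseteq \H$, and then noting injectivity since $K$ is recovered as $(K\wo I)\cup I$. Your extra remarks on the vertex set of $\H(I)$ and on why the map need not be surjective are fine additions but not needed.
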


\begin{proof}
    We'll show that $K \mapsto K\wo I$ is map from $\K_\H(E)$ to $\K[t-i]_{\H(I)}(E\wo I)$ from which it is clear that the map is an injection. We have $\abs{K\wo I} = t-i$ since $I \subsetneq E \of K$. Consider then an $(s-i)$-subset $F \of K\wo I$. We have $F \cup I \in \binom{K}{s} \of \H$, hence $F = (F\cup I)\wo I \in \H(I)$. Therefore $K\wo I \in \K[t-i](\H(I))$ and $K\wo I \in \K[t-i]_{\H(I)}(E\wo I)$.
\end{proof}

\begin{lem}\label{lem:nbhdcpereal}
	Let $1 \le i < s < t$ and suppose that $\H$ is an $s$-graph such that $\D_i(\H) \le \binom{x-i}{s-i}$ for some (not necessarily integral) $x \ge t-1$. 
	If $I \subsetneq E \in \H$ and $\J = \H(I)$, then 
	\[
	\frac{\k[t-i](\J)}{\abs{\J}} \le \frac{(x-s)_{(t-s)}}{(t-i)_{(t-s)}},
	\]
	where $\k[t-i](\J)$ is the number of $(t-i)$-cliques in the $(s-i)$-graph $\J$. If equality is achieved then $\abs{\J} = \binom{x-i}{s-i}$ and $k^{t-i}(\J) = \binom{x-i}{t-i}$. 
\end{lem}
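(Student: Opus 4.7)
The plan is to bound $\k[t-i](\J)$ using the Lovász form of Kruskal-Katona (Theorem \ref{thm:LKK}) and then push the resulting inequality through via a monotonicity argument on a falling factorial. The hypothesis $I \subsetneq E \in \H$ guarantees that $E\wo I \in \J$, so $\J$ is a nonempty $(s-i)$-graph whose size satisfies $\abs{\J} = d_\H(I) \le \D_i(\H) \le \binom{x-i}{s-i}$.

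First, I would introduce a real parameter $y$ with $s-i \le y \le x-i$ defined by $\abs{\J} = \binom{y}{s-i}$; existence and uniqueness come from strict monotonicity of $\binom{\cdot}{s-i}$ on $[s-i-1,\infty)$, and the upper bound $y \le x-i$ follows from the inequality displayed above. Applying Theorem \ref{thm:LKK} to $\J$ yields either $\k[t-i](\J) = 0$ (if $y < t-i$) or $\k[t-i](\J) \le \binom{y}{t-i}$ (if $y \ge t-i$). In the former case the left-hand side of our target inequality is $0$, which is at most the right-hand side because the assumption $x \ge t-1$ ensures $(x-s)_{(t-s)} \ge 0$. In the latter case, cancellation of generalized binomial coefficients gives
\[
    \frac{\binom{y}{t-i}}{\binom{y}{s-i}} = \frac{(y-s+i)_{(t-s)}}{(t-i)_{(t-s)}},
\]
and each factor $y-s+i-j$ of the numerator (for $0 \le j \le t-s-1$) is at least $1$ when $y \ge t-i$. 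Hence the numerator is strictly increasing in $y$ on $[t-i,\infty)$, and combined with $y \le x-i$ we obtain the claimed bound. The equality statement then follows by tracing back: equality forces $\k[t-i](\J) = \binom{y}{t-i}$ in the Lovász-KK step and $y = x-i$ in the monotonicity step, which together say $\abs{\J} = \binom{x-i}{s-i}$ and $\k[t-i](\J) = \binom{x-i}{t-i}$.

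The argument presents no deep obstacle; both steps are essentially mechanical applications of existing tools. The one point needing care is the boundary case $x = t-1$, where the right-hand side vanishes: here $y \le x-i = t-i-1 < t-i$, so the Lovász-KK case split forces $\k[t-i](\J) = 0$ as well, and the bound holds trivially (with the equality characterization vacuous). A secondary bookkeeping issue is that the generalized binomial coefficient $\binom{y}{t-i}$ appearing in Theorem \ref{thm:LKK} is only meaningful once $y \ge t-i$, which is exactly why I split the argument into the two cases above rather than trying to push the inequality through uniformly in $y$.
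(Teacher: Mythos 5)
Your proof is correct and follows essentially the same route as the paper's: parametrize $\abs{\J}=\binom{y}{s-i}$ with $y\le x-i$, split on $y<t-i$ versus $y\ge t-i$, apply \cref{thm:LKK}, rewrite $\binom{y}{t-i}/\binom{y}{s-i}$ as a ratio of falling factorials, use monotonicity in $y$, and trace equality back through the two inequalities. The extra attention you give to the boundary case $x$ near $t-1$ is a reasonable refinement but does not change the argument.
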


\begin{proof}
	The number of edges in the neighborhood is $\abs{\J} = d_{\H}(I) \le \Delta_i(\H) \le \binom{x-i}{s-i}$, so $\abs{\J} = \binom{y}{s-i}$ for some $s-i-1 \le y \le x-i$. If $y < t-i$, then $k^{t-i}(\J) = 0$, so the lemma holds. Otherwise, $y \ge t-i$. By \cref{thm:LKK},  $k^{t-i}(\J) \le \binom{y}{t-i}$, so
	\begin{align*}
		\frac{k^{t-i}(\J)}{\abs{\J}} &\le \frac{\binom{y}{t-i}}{\binom{y}{s-i}}\label{tag1}\tag{1}\\
		&= \frac{y(y-1)\cdots(y-s+i+1)(y-s+i)\cdots(y-t+i+1)}{y(y-1)\cdots(y-s+i+1)}\frac{(s-i)!}{(t-i)!}\\
		&= \frac{(y-s+i)_{(t-s)}}{(t-i)_{(t-s)}}\quad\text{using }s < t\\
		&\le \frac{(x-s)_{(t-s)}}{(t-i)_{(t-s)}},\label{tag2}\tag{2}
	\end{align*}
	since $(x-s)_{(t-s)}$ is a strictly increasing function of $x$ for $x \ge t-1$, and we have $y+i > t-1$. If $\frac{\k[t-i](\J)}{\abs{\J}} = \frac{(x-s)_{(t-s)}}{(t-i)_{(t-s)}}$, then equality holds in (\ref{tag2}), so $y+i=x$, and $\abs{\J} = \binom{x-i}{s-i}$. Then equality in (\ref{tag1}) implies that $\k[t-i](\J) = \binom{x-i}{t-i}$. 
\end{proof}

\begin{rem}\label{rem:Jung} The expression $\k[t-i]_{s-i}(m)/m$ is not an increasing function of $m$, whereas $\frac{(x-s)_{(t-s)}}{(t-i)_{(t-s)}}$ is an increasing function of $x$. For values of $m$ where \[
	\frac{\k[t-i]_{s-i}(m)}{m} = \max_{m'\le m}\frac{\k[t-i]_{s-i}(m')}{m'}\label{tag3}\tag{3}
	\] we can improve \cref{lem:nbhdcpereal} to say that if $\D_i(\H) \le m$ then \[	\frac{\k[t-i](\J)}{\abs{\J}} \le \frac{\k[t-i]_{s-i}(m)}{m}. 
	\]	
	For $m = \binom{x-i}{s-i}$ where $x$ is an integer, it is easy to check that (\ref{tag3}) holds. It is an interesting question to determine which values of $m$ satisfy (\ref{tag3}).
\end{rem}

\begin{thm}\label{thm:edge1}
	Let $1 \le i < s$ and suppose that $\H$ is an $s$-graph having $m$ edges such that $\D_i(\H) \le \binom{x-i}{s-i}$ for some (not necessarily integral) $x \ge s$. Then, for all $t \ge s+1$, \[\k[t](\H) \le m\frac{\binom{x}t}{\binom{x}s}.
	\]
	If equality holds then for each $I \in \shad[i](\H)$ we have $\k[t-i](\H(I)) = \binom{x-i}{t-i}$.
\end{thm}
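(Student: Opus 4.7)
The plan is to chain together \cref{lem:scliquesum}, \cref{lem:nbhd}, and \cref{lem:nbhdcpereal} via a double-counting argument. Starting from \cref{lem:scliquesum}, I have $\binom{t}{s}\k(\H) = \sum_{E \in \H} \k_\H(E)$. To bring the $i$-degree hypothesis into play, I regroup the right-hand side by $i$-subsets; since each edge contains exactly $\binom{s}{i}$ subsets of size $i$,
\[
    \binom{s}{i}\sum_{E \in \H} \k_\H(E) \;=\; \sum_{I \in \shad[i](\H)}\ \sum_{\substack{E \in \H \\ E \supseteq I}} \k_\H(E).
\]

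For each $I \in \shad[i](\H)$, I bound the inner sum in three moves. By \cref{lem:nbhd}, $\k_\H(E) \le \k[t-i]_{\H(I)}(E\wo I)$, and as $E$ ranges over the edges of $\H$ containing $I$, the set $F = E \wo I$ ranges over the edges of the $(s-i)$-graph $\H(I)$. A second application of \cref{lem:scliquesum}, now to $\H(I)$ and $(t-i)$-cliques, identifies $\sum_{F \in \H(I)} \k[t-i]_{\H(I)}(F)$ as $\binom{t-i}{s-i}\k[t-i](\H(I))$. Then \cref{lem:nbhdcpereal} gives $\k[t-i](\H(I)) \le \abs{\H(I)}\cdot (x-s)_{(t-s)}/(t-i)_{(t-s)}$, and summing over $I$ together with the incidence identity $\sum_{I \in \shad[i](\H)} \abs{\H(I)} = \binom{s}{i} m$ yields
\[
    \binom{s}{i}\binom{t}{s}\k(\H) \;\le\; \binom{s}{i}\binom{t-i}{s-i}\, m\, \frac{(x-s)_{(t-s)}}{(t-i)_{(t-s)}}.
\]

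The remainder is algebraic: after cancelling $\binom{s}{i}$ and simplifying $\binom{t-i}{s-i}/\bigl[\binom{t}{s}(t-i)_{(t-s)}\bigr] = s!/t!$ (using $(t-i)_{(t-s)} = (t-i)!/(s-i)!$), the right-hand coefficient collapses to $m\, s! (x-s)_{(t-s)}/t! = m\binom{x}{t}/\binom{x}{s}$, as required. For the equality statement, tightness in the overall bound forces tightness in \cref{lem:nbhdcpereal} for every $I \in \shad[i](\H)$, which yields $\k[t-i](\H(I)) = \binom{x-i}{t-i}$. The one loose end is that \cref{lem:nbhdcpereal} requires $x \ge t-1$ whereas the theorem only assumes $x \ge s$; however, if $s \le x < t-1$ then the degree hypothesis forces $\D_i(\H) < \binom{t-i}{s-i}$, so no $i$-subset can be contained in a $t$-clique of $\H$ and the claim is vacuous. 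I expect the main work to be little more than careful bookkeeping with falling factorials.
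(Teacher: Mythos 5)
Your proposal is correct and follows essentially the same route as the paper: the regrouped double count over pairs $(I,E)$ with $I\subsetneq E\of K$ is exactly the paper's count of triples $(I,E,K)$, chained through \cref{lem:nbhd}, \cref{lem:scliquesum} applied to $\H(I)$, \cref{lem:nbhdcpereal}, and $\sum_I d_\H(I)=\binom{s}{i}m$, with the same equality analysis. Your disposal of the range $s\le x<t-1$ by noting that no $i$-set can lie in a $t$-clique is the same observation the paper uses (it cuts at $x<t$ rather than $x<t-1$, but either split is compatible with the hypothesis $x\ge t-1$ of \cref{lem:nbhdcpereal}).
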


\begin{proof}
	If $t > x$ then $\k(\H) = 0$ because any $i$-set $I$ contained in a $t$-clique would have $d_{\H}(I) \ge \binom{t-i}{s-i} > \binom{x-i}{s-i}$. Therefore we may assume $t \le x$. We will count 
	\[
	S = \set{(I, E, K): I \subsetneq E \of K \in \K(\H), \abs{I} = i, \abs{E} = s}
	\] 
	in two ways. Counting by $K$, then $E$, then $I$, we obtain \[\abs{S} = \k(\H)\binom{t}{s}\binom{s}{i}.\] Counting by $I$, then $E$, then $K$, and letting $\J = \H(I)$, we obtain
	\begin{align*}
		\abs{S} &= \sum_{I \in \shad[i](\H)} \sum_{E \supseteq I} \k_\H(E)\\
		&\le \sum_{I \in \shad[i](\H)} \sum_{E \supseteq I} k^{t-i}_{\J} (E\wo I)\quad\text{by \cref{lem:nbhd}}\\
		&= \sum_{I \in \shad[i](\H)} k^{t-i}(\J)\binom{t-i}{s-i}\quad\text{by \cref{lem:scliquesum}}\\
		&\le \binom{t-i}{s-i} \sum_{I \in \shad[i](\H)} \frac{(x-s)_{(t-s)}}{(t-i)_{(t-s)}}\abs{\J}\quad\text{by \cref{lem:nbhdcpereal}}\\
		&= \binom{t-i}{s-i}\frac{(x-s)_{(t-s)}}{(t-i)_{(t-s)}} \sum_{I \in \shad[i](\H)} d_{\H}(I)\\
		&= \binom{t-i}{t-s}\frac{\binom{x-s}{t-s}}{\binom{t-i}{t-s}} \sum_{I \in \shad[i](\H)} d_{\H}(I)\\
		&=\binom{x-s}{t-s}\binom{s}{i}m.
	\end{align*}
	Therefore, $\k(\H)\binom{t}{s}\binom{s}{i} = \abs{S} \le \binom{x-s}{t-s}\binom{s}{i}m$,
	and
	\[
	\k(\H) \le \frac{\binom{x-s}{t-s}}{\binom{t}{s}}m = \frac{\binom{x}{t}}{\binom{x}{s}}m.
	\]
	The last equation follows from the fact that $\binom{x}{t}\binom{t}{s} = \frac{(x)(x-1)\cdots(x-t+1)}{s!(t-s)!} = \binom{x}{s}\binom{x-s}{t-s}$.
	
	If $\k[t](\H) = m\frac{\binom{x}t}{\binom{x}s}$ then we have equality in the above application of \cref{lem:nbhdcpereal} for every $I \in \shad[i](\H)$. By \cref{lem:nbhdcpereal}, $\k[t-i](\H(I)) = \binom{x-i}{t-i}$ for every $I \in \shad[i](\H)$.
\end{proof}

\subsection{Hypergraphs with a fixed number of cliques}

In this section we consider $s$-graphs that have a fixed number of $u$-cliques, for some $u > s$. The numbers of vertices and edges are not specified. We will use the following lemma to connect this problem to our previous results.

\begin{lem}\label{lem:degreebound}
	Let $1 \le i < s \le u$ and suppose that $\H$ is an $s$-graph such that $\Delta_i(\H) \le \binom{x-i}{s-i}$ for some (not necessarily integral) $x \ge s$. If $x<u$ then $\H$ has no $u$-cliques, and otherwise the $u$-graph $\U:=\K[u](\H)$ satisfies $\D_i(\U) \le \binom{x-i}{u-i}$.
\end{lem}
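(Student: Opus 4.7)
The plan is to split on whether $x<u$ or $x\ge u$, dispatching the first case by a direct degree lower bound on any $i$-subset of a hypothetical $u$-clique, and reducing the second to the real-valued Kruskal--Katona estimate in \cref{thm:LKK} via the natural injection of $u$-cliques containing a fixed $i$-set $I$ into $(u-i)$-cliques of the neighborhood $\H(I)$.

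For $x<u$: if $K$ were a $u$-clique of $\H$ and $I\in\binom{K}{i}$, then every $s$-subset of $K$ containing $I$ lies in $\H$, so $d_\H(I)\ge\binom{u-i}{s-i}$. Since $x\ge s$ and $u\ge s$, both $x-i$ and $u-i$ lie in $[s-i-1,\infty)$, and strict monotonicity of $\binom{\,\cdot\,}{s-i}$ on that interval combined with $u>x$ gives $\binom{u-i}{s-i}>\binom{x-i}{s-i}$, contradicting $\Delta_i(\H)\le\binom{x-i}{s-i}$.

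For $x\ge u$: I would fix $I\in\binom{V(\H)}{i}$ and set $\J:=\H(I)$, so that $|\J|=d_\H(I)\le\binom{x-i}{s-i}$. The crucial observation is that $K\mapsto K\setminus I$ is an injection from the $u$-cliques of $\H$ containing $I$ into $\K[u-i](\J)$: for any $F\in\binom{K\setminus I}{s-i}$ the set $F\cup I$ lies in $\binom{K}{s}\of\H$, so $F\in\cE(\J)$. Consequently $d_\U(I)\le\k[u-i](\J)$. Writing $|\J|=\binom{y}{s-i}$ with $y\ge s-i-1$, monotonicity of $\binom{\,\cdot\,}{s-i}$ forces $y\le x-i$. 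If $y<u-i$ then \cref{thm:LKK} gives $\k[u-i](\J)=0$, and this is bounded by $\binom{x-i}{u-i}$ (which is $\ge 1$ because $x-i\ge u-i$). If $y\ge u-i$ then \cref{thm:LKK} combined with strict monotonicity of $\binom{\,\cdot\,}{u-i}$ on $[u-i-1,\infty)$ yields $\k[u-i](\J)\le\binom{y}{u-i}\le\binom{x-i}{u-i}$. Since $I$ was arbitrary, $\Delta_i(\U)\le\binom{x-i}{u-i}$.

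The main obstacle is really just bookkeeping: tracking the real-valued parameter $x$ through the generalized binomial coefficient, verifying the two monotonicity applications on the correct interval, and handling the degenerate regime in which $\J$ is too sparse to contain any $(u-i)$-clique. No fresh combinatorial idea is needed beyond the injection $K\mapsto K\setminus I$ and the real-valued clique version of Kruskal--Katona in \cref{thm:LKK}.
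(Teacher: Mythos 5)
Your proof is correct and follows essentially the same route as the paper's: the injection $K\mapsto K\setminus I$ identifying $\U(I)$ with a subfamily of $\K[u-i](\H(I))$, followed by \cref{thm:LKK} and monotonicity of the generalized binomial coefficient to bound $\k[u-i](\H(I))$ by $\binom{x-i}{u-i}$. The only (harmless) divergence is the $x<u$ case, which you settle by a direct degree count $d_\H(I)\ge\binom{u-i}{s-i}>\binom{x-i}{s-i}$, whereas the paper observes that $y\le x-i<u-i$ forces the degenerate case of \cref{thm:LKK}, so every $\U(I)$ is empty.
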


\begin{proof}
	For any $i$-set $I$ of vertices of $\H$, let $\cK = \U(I)$ and let $\cF = \H(I)$. We prove first that $\cK \subseteq \K[u-i](\cF)$. Consider an arbitrary $(u-i)$-edge  $E_I$ of $\cK$. By definition it satisfies $E_I\cup I \in \K[u](\H)$, so every $s$-set in $E_I\cup I$ is an edge of $\H$, and every $(s-i)$-set in $E_I$ is an edge of $\H(I)$. Therefore $E_I$ is a $(u-i)$-clique in $\H(I) = \cF$, as required.
    
    We are given that $\abs{\cF} = d_{\H}(I) \le \binom{x-i}{s-i}$, so we have $\abs{\cF} = \binom{y}{s-i}$ for some $s-i-1\le y \le x-i$. By \cref{thm:LKK},  if $y<u-i$ then $\k[u-i](\cF)=0$ , i.e., $\cK$ is empty, and otherwise $\k[u-i](\cF) \le \binom{y}{u-i} \le \binom{x-i}{u-i}$. If $x<u$ then we are always in the first case. Otherwise we have 
$d_\U(I) = \abs{\cK} \le \k[u-i](\cF) \le \binom{x-i}{u-i}$.
\end{proof}

We generalize \cref{thm:edge1} as follows. The $s=u$ case is exactly \cref{thm:edge1}.

\begin{thm}\label{thm:uedge1}
	Let $1 \le i < s \le u$ and suppose that $\H$ is an $s$-graph such that $\k[u](\H) = p$ and $\Delta_i(\H) \le \binom{x-i}{s-i}$ for some (not necessarily integral) $x \ge s$. Then, for all $t \ge u$, \[\k(\H) \le p\frac{\binom{x}{t}}{\binom{x}{u}}.\] If equality holds then for each $I \in \shad[i](\U)$ we have $\k[t-i](\U) = \binom{x-i}{t-i}$, where $\U = \K[u](\H)$.
\end{thm}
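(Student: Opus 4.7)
The plan is to reduce directly to \cref{thm:edge1} by passing from $\H$ to the $u$-graph $\U = \K[u](\H)$ of its $u$-cliques. First, I would dispose of the degenerate case $x < u$: by \cref{lem:degreebound}, $\H$ then has no $u$-clique, so $p = 0$, and since $t \ge u$ we also get $\k(\H) = 0$, making the bound trivial.

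So assume $x \ge u$. Two compatibility properties of $\U$ need to be checked. First, \cref{lem:degreebound} directly gives $\abs{\U} = p$ and $\D_i(\U) \le \binom{x-i}{u-i}$. Second -- the only slightly non-routine observation -- the $t$-cliques of $\H$ and of $\U$ coincide: for a $t$-set $K$ with $t \ge u \ge s$, the condition $\binom{K}{s} \of \H$ is equivalent to $\binom{K}{u} \of \U$, since every $s$-subset of $K$ is contained in some $u$-subset of $K$ (using $s \le u \le t$). Hence $\K(\H) = \K(\U)$, and in particular $\k(\H) = \k(\U)$.

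Thus $\U$ is a $u$-graph having $p$ edges with $\D_i(\U) \le \binom{x-i}{u-i}$. The case $t = u$ is immediate, since $\k(\H) = p = p\binom{x}{t}/\binom{x}{u}$; for $t \ge u+1$, applying \cref{thm:edge1} to $\U$ (with $u$ playing the role of $s$) yields
\[
    \k(\H) = \k(\U) \le p \cdot \frac{\binom{x}{t}}{\binom{x}{u}},
\]
and the equality characterization transfers verbatim: equality forces $\k[t-i](\U(I)) = \binom{x-i}{t-i}$ for every $I \in \shad[i](\U)$. The heavy lifting has already been carried out in \cref{lem:degreebound} and \cref{thm:edge1}, so no serious obstacle remains; the only point needing care is the clique identification $\K(\H) = \K(\U)$, which is routine from $s \le u \le t$.
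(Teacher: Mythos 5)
Your proposal is correct and follows essentially the same route as the paper: pass to the $u$-graph $\U = \K[u](\H)$, invoke \cref{lem:degreebound} to get $\abs{\U}=p$ and $\D_i(\U)\le\binom{x-i}{u-i}$, then apply \cref{thm:edge1} with $u$ in the role of $s$. The only differences are cosmetic refinements: you explicitly dispatch the boundary cases $x<u$ (where $p=0$) and $t=u$ (where \cref{thm:edge1} would require $t\ge u+1$), both of which the paper silently elides, and you prove the full identity $\K(\H)=\K(\U)$ where the paper uses only the inclusion $\K(\H)\subseteq\K(\U)$ to get $\k(\H)\le\k(\U)$ — either suffices for both the bound and the equality characterization.
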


\begin{proof}
	By \cref{lem:degreebound}, we can apply \cref{thm:edge1} to the $u$-graph $\U:=\K[u](\H)$. Since $\U$ is a $u$-graph with $p$ edges and $\D_i(\U) \le \binom{x-i}{u-i}$, Theorem \ref{thm:edge1} implies that for all $t \ge u$ we have $\k[t](\U) \le p \frac{\binom{x}{t}}{\binom{x}{u}}$, with equality only if for each $I \in \shad[i](\U)$ we have $\k[t-i](\U(I)) = \binom{x-i}{t-i}$. Recall $s \le u \le t$. Given a $t$-clique $T$ in the $s$-graph $\H$, every $u$-set in $T$ is a $u$-clique of $\H$, so $T$ is also a $t$-clique in the $u$-graph $\U$. Therefore $\k[t](\H) \le \k[t](\U) \le p \frac{\binom{x}{t}}{\binom{x}{u}}$.
\end{proof}

\section{Extremal Hypergraphs and Asymptotic Tightness} 
\label{sec:extremal}

In this section we discuss the extent to which the signpost results from the previous section are tight. We begin in \Cref{ssec:ukkt} by discussing cases where colex hypergraphs are the unique examples achieving the bounds in \Cref{thm:kkt}. In \Cref{ssec:steiner} we then introduce some constructions that we use to produce cases of equality in our theorems. In the later subsections we discuss the three signpost results in relation to asymptotic tightness and uniqueness of examples. 

\subsection{Uniqueness in Kruskal-Katona}\label{ssec:ukkt}
We introduce two definitions from \cite{FurediGriggs86} by F\"uredi and Griggs.

\begin{defn}
    Given $1\le q < s \le n$ we say that $m$ is a \emph{jumping number} (or $(s,q)$-jumping number if we want to be more explicit) if $\shad^s(m+1) > \shad^s(m)$. We say that $m$ is a \emph{colex-unique number} if all $s$-graphs with $m$ edges satisfying $\abs{\shad(\H)} = \shad^s(m)$ are isomorphic to $\C(m)$.
\end{defn}

The following two theorems are proved in \cite{FurediGriggs86}.

\begin{thm}[F\"{u}redi and Griggs \cite{FurediGriggs86}] \label{thm:jumping}
	Suppose that $1 \le q < s \le n$ and that $0 \le m \le \binom{n}{s}$ is represented by the strict $s$-cascade $m = \is$. Then $m$ is an $(s,q)$-jumping number if and only if $\ell\le q$.
\end{thm}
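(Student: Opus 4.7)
The plan is to analyze the single $s$-set $E_{m+1}$ added when passing from $\C(m)$ to $\C(m+1)$, and to decide whether it contributes any new $q$-subsets to the shadow. Reading off the layer decomposition in \cref{lem:uniquescascade}, when $m = \is$ has cascade length $\ell$ one finds
\[
    E_{m+1} = \{1, 2, \dots, s-\ell\} \cup \{n_{s-\ell+1}+1, n_{s-\ell+2}+1, \dots, n_s+1\}
\]
(the ``small'' part being empty when $\ell = s$), so that $\shad^s(m+1) - \shad^s(m) = \abs[\big]{\binom{E_{m+1}}{q} \setminus \shad(\C(m))}$.

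When $\ell < s$, I would compute the cascade of $m+1$ directly: appending $\binom{s-\ell}{s-\ell} = 1$ to the cascade of $m$ yields the sequence $(n_s, \dots, n_{s-\ell+1}, s-\ell)$, which is strictly decreasing because $n_{s-\ell+1} \ge s-\ell+1$ (\cref{rem:strict}), so by uniqueness this is the cascade of $m+1$. Applying \cref{lem:facts} to both $m$ and $m+1$ then yields
\[
    \shad^s(m+1) - \shad^s(m) = \binom{s-\ell}{q-\ell},
\]
which is positive iff $\ell \le q$ (using $s > q$). This settles both directions whenever $\ell < s$, and in particular covers the entire forward implication since $\ell \le q < s$ forces $\ell < s$.

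The remaining case is $\ell = s$, which automatically means $\ell > q$; this is the main obstacle, since the cascade of $m+1$ need not simply gain an entry but can collapse by several levels (for instance, if $n_2 = n_1+1$ then $\binom{n_2}{2} + \binom{n_2}{1} = \binom{n_2+1}{2}$, and such collapses can iterate), making a direct cascade comparison unwieldy. I would instead argue combinatorially. Writing $b_j := n_{s-j+1}+1$, so that $E_{m+1} = \{b_1, \dots, b_s\}$, fix any $q$-subset $Q \subseteq E_{m+1}$; since $q < s$, let $j^*$ be the largest index in $[s]$ with $b_{j^*} \notin Q$, so that $\{b_{j^*+1}, \dots, b_s\} \subseteq Q$. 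My claim is that $Q$ is contained in an $s$-set of the layer-$(j^*-1)$ form $\{b_1, \dots, b_{j^*-1}\} \cup S'$ for some $S' \in \binom{[n_{s-j^*+1}]}{s-j^*+1}$: by maximality of $j^*$, every $b_j \in Q$ with $j \ge j^*$ actually has $j \ge j^*+1$, and strictness of the cascade then gives $b_j \le n_{s-j^*+1}$ for such $j$, so the $s-j^*$ elements $b_{j^*+1}, \dots, b_s$ fit into $[n_{s-j^*+1}]$ while $S'$ has size $s-j^*+1$, leaving exactly one slot to spare. Hence every $q$-subset of $E_{m+1}$ already lies in $\shad(\C(m))$, giving $\shad^s(m+1) = \shad^s(m)$.
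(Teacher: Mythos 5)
The paper quotes this result from F\"uredi and Griggs \cite{FurediGriggs86} and does not reproduce a proof, so there is no internal argument to compare against; I am therefore assessing your proposal on its own terms.

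Your argument is correct. The identification of $E_{m+1}$ from \cref{lem:uniquescascade} is right: when $\ell<s$ the cascade of $m+1$ is the strict $s$-cascade $(n_s,\dots,n_{s-\ell+1},s-\ell)$ and $E_{m+1}$ is its sole layer-$\ell$ element $\{1,\dots,s-\ell\}\cup\{n_{s-\ell+1}+1,\dots,n_s+1\}$; when $\ell=s$ the leading block is empty and $E_{m+1}=\{n_1+1,\dots,n_s+1\}$. The \cref{lem:facts} computation for $\ell<s$ then gives the jump size as exactly $\binom{s-\ell}{q-\ell}$, which vanishes when $\ell>q$ and is positive when $\ell\le q$ (here one uses $q<s$, so that $0\le q-\ell<s-\ell$); in particular this covers the forward direction since $\ell\le q$ forces $\ell<s$. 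Your combinatorial absorption for the residual case $\ell=s$ is also sound: with $j^*$ the largest index for which $b_{j^*}\notin Q$, the elements $b_j\in Q$ with $j>j^*$ satisfy $b_j=n_{s-j+1}+1\le n_{s-j^*}+1\le n_{s-j^*+1}$ because the cascade is strictly decreasing, and strictness, $n_{s-j^*+1}\ge s-j^*+1$, guarantees at least one extra slot so that $\{b_{j^*+1},\dots,b_s\}$ extends to an $S'\in\binom{[n_{s-j^*+1}]}{s-j^*+1}$; hence $Q\subseteq\{b_1,\dots,b_{j^*-1}\}\cup S'$ is already in the layer-$(j^*-1)$ block of $\C(m)$. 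The aside about cascade collapse is loosely bookkept (what you mean is that if $n_2=n_1+1$ then $m+1=\cdots+\binom{n_2}{2}+\binom{n_1}{1}+1=\cdots+\binom{n_2+1}{2}$), but it is purely motivational and does not affect the correctness of the proof.
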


\begin{thm}[F\"{u}redi and Griggs \cite{FurediGriggs86}] \label{thm:uniq}
	Suppose that $1 \le q < s \le n$ and that $0 \le m \le \binom{n}{s}$ is represented by the strict $s$-cascade $m = \is$. Then $m$ is a colex-unique number for all $m\le s+1$. If $m>s+1$ then $m$ is a colex-unique number if and only if one of the following is true:
	\begin{enumerate}[a),thmparts]
		\item \label{part:jumping} $m$ is a jumping number, i.e. $\ell\le q$, or
		\item \label{part:one_under} there exists $ n' \le n$ such that $m=\binom{n'}s-1$.
	\end{enumerate}
    For $m>s+1$ conditions \ref{part:jumping} and \ref{part:one_under} are mutually exclusive.
\end{thm}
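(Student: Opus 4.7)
The plan is to split the proof into three parts: a direct check of the base cases $m \le s+1$, proofs that conditions (a) and (b) each suffice for uniqueness, and a construction showing each is necessary. Mutual exclusivity when $m > s+1$ is quick: expanding $\binom{n'}{s}-1$ by iterated use of Pascal's identity yields a strict $s$-cascade of length exactly $s > q$, so $m = \binom{n'}{s}-1$ excludes being a jumping number. The base cases $m \le s+1$ admit exhaustive analysis since only finitely many isomorphism types of $s$-graphs are available.

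For sufficiency of (b), when $m = \binom{n'}{s}-1$, the colex graph is $K_{n'}^{(s)}$ minus one edge and $\abs{\shad(\C(m))} = \binom{n'}{q}$. Given any $\H$ with $\abs{\H} = m$ and $\abs{\shad(\H)} = \binom{n'}{q}$, I would iterate \cref{thm:kkt} downward through the intermediate shadow levels: applying the theorem to $\H$ at each level yields $\abs{\shad[q'](\H)} \ge \binom{n'}{q'}$ for every $q < q' \le s-1$ (since the cascade of $m$ telescopes via Pascal), and a second application of \cref{thm:kkt} to each $\shad[q'](\H)$ forces the chain down to level $q$ to be tight, giving $\abs{\shad[q'](\H)} = \binom{n'}{q'}$ at every intermediate level. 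In particular $\abs{V(\H)} = \abs{\shad[1](\H)} = n'$, so $\H \subseteq \binom{V(\H)}{s}$ and edge-counting yields $\H \cong K_{n'}^{(s)} \setminus \{E\}$.

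For sufficiency of (a), I would induct on $s$, using left-compressions (which weakly decrease $\abs{\shad(\H)}$) to reduce to the case where $\H$ is left-compressed. I would then split $\H$ at its largest vertex $v$ into the link $\H_v$ (an $(s-1)$-graph) and the residue $\H_0$ of edges avoiding $v$, and use the identity
\[
\abs{\shad(\H)} = \abs{\shad(\H_0) \cup \shad(\H_v)} + \abs{\shad[q-1](\H_v)},
\]
which holds because $q$-subsets of edges of $\H$ either avoid $v$ (contributing to the first term) or contain $v$ (contributing to the second via the link). Left-compression forces $\shad(\H_v) \subseteq \shad(\H_0)$, collapsing the right side to $\abs{\shad(\H_0)} + \abs{\shad[q-1](\H_v)}$. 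Comparing term-by-term against the cascade decomposition of $\C(m)$ in \cref{lem:uniquescascade} and using that the jumping hypothesis $\ell \le q$ passes to $\H_v$ (whose cascade has length $\ell-1 \le q-1$), induction identifies each piece with its colex counterpart and hence $\H$ with $\C(m)$.

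For necessity, suppose $\ell > q$ and $m \ne \binom{n'}{s}-1$. Since $m$ is not a jumping number, $\shad^s(m+1) = \shad^s(m)$, so the $(m+1)$-th colex $s$-set $E$ satisfies $\binom{E}{q} \subseteq \shad(\C(m))$ and can be added without changing the $q$-shadow. To compensate, I would locate a redundant edge $E'$ of $\C(m)$ whose $q$-subsets all lie in $\shad(\C(m) \setminus \{E'\})$, remove it, and verify that $(\C(m) \setminus \{E'\}) \cup \{E\}$ is not isomorphic to $\C(m)$; the flexibility required for such an $E'$ to exist comes precisely from $\ell > q$ (which gives a nontrivial bottom cascade layer with slack), while the exclusion $m \ne \binom{n'}{s}-1$ rules out the configuration in which every admissible swap yields an isomorphic copy. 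The main obstacle is sufficiency of (a): ensuring that $\shad(\H_v) \subseteq \shad(\H_0)$ genuinely holds in a left-compressed optimum, and that the jumping hypothesis passes cleanly to each inductive piece, requires careful bookkeeping on how compressions interact with the bilevel shadow decomposition.
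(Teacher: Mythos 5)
The paper does not prove this result: \cref{thm:uniq} is quoted from F\"uredi and Griggs \cite{FurediGriggs86}, with the surrounding text stating that ``the following two theorems are proved in \cite{FurediGriggs86}.'' There is therefore no in-paper proof to compare against, so your attempt must be judged on its own.

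Your mutual-exclusivity observation is correct: $\binom{n'}{s}-1$ has strict $s$-cascade $[\,n'-1,n'-2,\dots,n'-s\,]_s$ of length exactly $s>q$, so such $m$ is never a jumping number.

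The sufficiency argument for (b) has a genuine gap. Your two applications of Kruskal--Katona do pin down $\abs{\shad[q'](\H)}=\binom{n'}{q'}$ for every $q'$ with $q\le q'\le s-1$ (the upper bound comes from applying the theorem to the $q'$-graph $\shad[q'](\H)$ and comparing against the known size of $\shad[q](\H)$), but the leap to ``in particular $\abs{\shad[1](\H)}=n'$'' is unjustified when $q>1$: Kruskal--Katona bounds lower-level shadows only from \emph{below}, and nothing in your chain controls levels strictly below $q$. Closing this would require knowing, for example, that a $q$-graph with $\binom{n'}{q}$ edges and $\binom{n'}{q+1}$ cliques of size $q+1$ must be a clique on $n'$ vertices --- but that is the $\ell=1$ instance of condition (a) of the very theorem you are proving, so as written this branch is circular unless you first establish (a) and then invoke it.

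The plan for (a) via left-compression and the link/residue split is plausible in outline (one small correction: the decomposition should be $\abs{\shad[q](\H)} = \abs{\shad[q](\H_0)\cup\shad[q](\H_v)}+\abs{\shad[q-1](\H_v)}$, i.e., the union term uses the $q$-shadow of $\H_v$, not a default-level one), but you yourself identify the hard points --- that compression actually forces $\shad[q](\H_v)\subseteq\shad[q](\H_0)$, and that the jumping hypothesis descends to $\H_v$ with cascade length $\ell-1\le q-1$ --- and those are left unverified; they are precisely where F\"uredi and Griggs do real work. The necessity direction is likewise unaddressed in substance: you would need to exhibit a concrete edge $E'$ whose removal does not shrink the $q$-shadow and to verify non-isomorphism of the resulting family with $\C(m)$, and ``the flexibility comes from $\ell>q$'' is not yet an argument. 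So the overall structure is reasonable, but several of its load-bearing steps are missing.
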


The next lemma and the subsequent corollary will help us in the process of tracing the criterion for uniqueness through the steps of the proof of \cref{thm:kkt}.

\begin{lem} \label{lem:compl}
	Suppose that $u,v\ge 1$ and the cascade representations
	\[
		N = \is[u][u][k] \qquad\text{and}\qquad M = [m_v,m_{v-1},\dots,m_{v-\ell+1}]_v
	\]
	satisfy $n_{u-k+1} = m_{v-\ell+1}$. Let $b = n_{u-k+1} = m_{v-\ell+1}$. Suppose moreover that 
	\begin{align*}
		\set{b,n_{u-k+2},\dots,n_{u-1},n_u} \cup \set{b,m_{v-\ell+2},\dots,m_{v-1},m_v} 
		&= \set{b,b+1,\dots,u+v-1}, \\\shortintertext{and} 
		\set{b,n_{u-k+2},\dots,n_{u-1},n_u} \cap \set{b,m_{v-\ell+2},\dots,m_{v-1},m_v} 
		&= \set{b}.
	\end{align*}
	Then $N+M=\binom{u+v}{u}=\binom{u+v}{v}$.
\end{lem}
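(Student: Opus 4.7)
The plan is to prove this by induction on $k+\ell$, with base case $k=\ell=1$ (since by hypothesis $k, \ell \ge 1$).

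For the base case, the union/intersection hypotheses collapse to $\{b\} = \{b, b+1, \ldots, u+v-1\}$, forcing $b = u+v-1$. Then $N + M = \binom{u+v-1}{u} + \binom{u+v-1}{v}$, which Pascal's identity rewrites as $\binom{u+v-1}{u} + \binom{u+v-1}{u-1} = \binom{u+v}{u}$.

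For the inductive step with $k+\ell \ge 3$, I would first apply inclusion-exclusion to the two hypothesized sets (sizes $k$ and $\ell$, meeting only in $\{b\}$, uniting to a set of size $u+v-b$) to obtain $b = u+v-k-\ell+1 \le u+v-2$. So $u+v-1$ is strictly larger than $b$ and lies in exactly one of the two sets; by the symmetry of all hypotheses under $(u,k,N) \leftrightarrow (v,\ell,M)$, I may assume $u+v-1 \in \{b, n_{u-k+2}, \ldots, n_u\}$, which by strict decrease of the $n_j$'s forces $n_u = u+v-1$ and in particular $k \ge 2$. Peeling off this top term would produce $N' := N - \binom{n_u}{u} = [n_{u-1}, n_{u-2}, \ldots, n_{u-k+1}]_{u-1}$, a strict $(u-1)$-cascade of length $k-1 \ge 1$. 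I would then verify that the hypotheses of the lemma transfer to $(N', M)$ with parameters $(u-1, v, k-1, \ell)$ and the same $b$: strictness and the bound $k-1 \le u-1$ are inherited, the smallest-index value is still $b$, removing $n_u = u+v-1$ (which occurred only in the first set) shrinks the union to $\{b, \ldots, (u-1)+v-1\}$, and the intersection remains $\{b\}$. The inductive hypothesis then gives $N' + M = \binom{u+v-1}{u-1}$, and Pascal's identity closes out: $N + M = \binom{u+v-1}{u} + \binom{u+v-1}{u-1} = \binom{u+v}{u}$.

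The main obstacle will be the technical bookkeeping: verifying that cascade strictness, the length bound, and the union/intersection structure all transfer cleanly to the smaller problem. The key calculation $b = u+v-k-\ell+1$ is what ensures, whenever $k+\ell \ge 3$, the existence of an element of the union strictly greater than $b$ to peel off, making the induction go through.
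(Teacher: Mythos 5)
Your proof is correct and follows essentially the same route as the paper's: peel the largest entry $n_u = u+v-1$ (or $m_v$, by symmetry) off a cascade of length at least $2$ and finish with Pascal's identity by induction. The only organizational difference is the choice of base case: you induct on $k+\ell$ with base case $k=\ell=1$, using the count $b=u+v-k-\ell+1$ to guarantee a term to peel, whereas the paper inducts with base case $\min(u,v)=1$ and treats the $k=1$ subcase directly; both amount to the same argument.
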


\begin{proof}
	Consider first the case that $\min(u,v)=1$. Without loss of generality we suppose that $u=1$. Then $u+v-1=v$ so for some $1\le b\le v$ we have
    \begin{align*}
        N+M &= [b]_1 + [v,v-1,v-2,\dots,b+1,b]_v \\
            &= b + \sum_{i=b}^v \binom{i}i \\
            &= b +(v-b+1) = v+1 = \binom{u+v}{u}.
    \end{align*}
	
	Now suppose that $u,v>1$. By symmetry we may suppose that $n_u=u+v-1$. If $k>1$ then we let
	\[
		N' = [n_{u-1},n_{u-2},\dots,b].
	\]
	Note that the representations of $N'$ and $M$ satisfy the hypotheses of the lemma, with $u'=u-1$ and $k'=k-1$. By induction we get 
	\[
		N+M = \binom{u+v-1}{u} + N'+M = \binom{u+v-1}{u} + \binom{u+v-1}{u-1} = \binom{u+v}{u}.
	\]
	On the other hand if $k=1$ then we're forced to have $N=[u+v-1]_u$ and $M=[u+v-1]_v$, so
	\[
		N+M = \binom{u+v-1}{u} + \binom{u+v-1}{v} = \binom{u+v-1}{u} + \binom{u+v-1}{u-1} = \binom{u+v}{u}. \qedhere
	\]
\end{proof}

\begin{cor} \label{cor:compl_len}
    Suppose that $1 \le s < n$ and that $0<m<\binom{n}{s}$. Let
    \[
        m = \is
    \]
    be the $s$-cascade representation of $m$. Then the $(n-s)$-cascade representation of $m' = \binom{n}{s} - m$ is 
    \[
        m' = [n'_{n-s},n'_{n-s-1},\dots,n'_{n-s-k+1}]_{n-s},
    \]
    where $n_{s-\ell+1} = n'_{n-s-k+1}$ and, writing $b$ for this value,
    \begin{equation*} \label{eq:compl}
        \begin{aligned}
        	\set{b, n_{s-\ell+2},\dots, n_{s-1},n_{s}} \cup \set{b,n'_{n-s-k+2},\dots,n'_{n-s-1},n'_{n-s}} &= \set{b,b+1,\dots,n-1} \\
        		\set{b, n_{s-\ell+2},\dots, n_{s-1},n_{s}} \cap \set{b,n'_{n-s-k+2},\dots,n'_{n-s-1},n'_{n-s}} &= \set{b}.
        \end{aligned}
        \tag{\dag}
    \end{equation*}
    In particular $k+\ell-1 = n-b$, so $k = n - \ell - b + 1$. 
\end{cor}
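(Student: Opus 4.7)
The plan is to construct an explicit candidate for the $(n-s)$-cascade of $m'$, verify via \cref{lem:compl} that it represents $m'$, and then invoke the uniqueness of cascade representations (\cref{lem:uniquescascade}) to conclude. Set $b = n_{s-\ell+1}$ and define the candidate sequence $(n'_{n-s}, n'_{n-s-1}, \ldots, n'_{n-s-k+1})$ to be the strictly decreasing enumeration of the set
\[
\{b, b+1, \ldots, n-1\} \setminus \{n_{s-\ell+2}, \ldots, n_s\}.
\]
By construction, $b$ is the minimum element of this set, so $n'_{n-s-k+1} = b$ and the union/intersection conditions in the statement are satisfied automatically. The length is $k = (n-b) - (\ell - 1) = n - \ell - b + 1$, which establishes the ``in particular'' assertion.

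Next, I would verify that the candidate is genuinely a strict $(n-s)$-cascade. Strict decreasingness is immediate, so by \cref{rem:strict} it suffices to show $k \le n-s$ and $n'_{n-s-k+1} \ge n-s-k+1$. Both follow from the strictness of the $s$-cascade for $m$: the inequality $b = n_{s-\ell+1} \ge s-\ell+1$ yields $k = n - b - \ell + 1 \le n-s$, and the inequality $\ell \le s$ yields $n - s - k + 1 = b + \ell - s \le b = n'_{n-s-k+1}$.

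To finish, apply \cref{lem:compl} with $u = s$ and $v = n-s$, taking $N = m$ and $M$ the number represented by our candidate cascade. The hypotheses of that lemma---matching minima at $b$, together with the union/intersection conditions---are precisely what we have arranged, so the lemma outputs $m + M = \binom{s+(n-s)}{s} = \binom{n}{s}$, i.e.\ $M = m'$. The uniqueness of the strict $(n-s)$-cascade representation of $m'$ from \cref{lem:uniquescascade} then pins down the candidate as the $(n-s)$-cascade of $m'$.

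The substantive combinatorial content is already packaged in \cref{lem:compl}, so the proof of the corollary is essentially bookkeeping; the only delicate point is translating the strictness conditions from one cascade to its ``complement.'' That translation is precisely the pair of inequalities $\ell \le s$ and $b \ge s-\ell+1$, both furnished by the hypothesis that the $s$-cascade of $m$ is strict.
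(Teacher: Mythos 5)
Your proposal is correct and follows essentially the same route as the paper: define the primed sequence by the complementation condition (\dag), verify it is a strict $(n-s)$-cascade via \cref{rem:strict} using exactly the inequalities $b = n_{s-\ell+1} \ge s-\ell+1$ and $\ell \le s$, apply \cref{lem:compl} to get the sum $\binom{n}{s}$, and conclude by uniqueness of strict cascade representations. The only difference is that you spell out the strictness inequalities and the final appeal to \cref{lem:uniquescascade} explicitly, which the paper leaves as "easy to check" and implicit, respectively.
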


\begin{proof}
    With $n'_{n-s},n'_{n-s-1},\ldots,b$ defined to satisfy \cref{eq:compl} it is easy to check that $n'_{n-s-k+1} = b \ge n-s-k+1$ and  $k\le n-s$. Using \cref{rem:strict} we deduce that $(n'_{n-s},n'_{n-s-1},\cdots,b)$ is a strict $(n-s)$-cascade.
	Then, by \cref{lem:compl}, 
    \[
        \is + [n'_{n-s},n'_{n-s-1},\dots,n'_{n-s-k+1}]_{n-s} = \binom{s + (n-s)}{s} = \binom{n}s.
    \]
    Thus $[n'_{n-s},n'_{n-s-1},\dots,n'_{n-s-k+1}]_{n-s}$ is the $(n-s)$-cascade representation of $\binom{n}s-m$. 
\end{proof}

\begin{thm} \label{thm:clique_jumping}
    Suppose that $1 \le s < t \le n$ and that $0<m<\binom{n}{s}$. Let
    \[
        m+1 = \is
    \]
    be the $s$-cascade representation of $m+1$, having length $\ell$. Then $m$ has $\k_s(m+1) > \k_s(m)$ if and only if $t \le \ell + n_{s-\ell+1} -1$. In this case we say that $m$ is an $(s,t)$-clique-jumping number.
\end{thm}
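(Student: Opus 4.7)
The plan is to pass to complements via the duality in \cref{lem:ktcolex}, reducing the clique-jumping condition to a shadow-jumping condition, and then to combine \cref{thm:jumping} with the complementation identity from \cref{cor:compl_len} to obtain the desired cascade-based criterion.

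Using \cref{lem:ktcolex} I obtain
\[
    \k_s(m+1) - \k_s(m) = \shad[n-t]^{n-s}\bigl(\binom{n}{s}-m\bigr) - \shad[n-t]^{n-s}\bigl(\binom{n}{s}-m-1\bigr).
\]
Writing $M \defeq \binom{n}{s} - (m+1)$, the inequality $\k_s(m+1) > \k_s(m)$ is therefore equivalent to $M$ being an $(n-s, n-t)$-jumping number in the sense of \cref{thm:jumping}.

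In the generic case $m+1 < \binom{n}{s}$ (so $M > 0$) and $t < n$ (so $n-t \ge 1$ and \cref{thm:jumping} applies directly), I would apply \cref{cor:compl_len} to the $s$-cascade $m+1 = \is$ of length $\ell$. The corollary yields immediately that the $(n-s)$-cascade of $M$ has length $k = n - \ell - n_{s-\ell+1} + 1$. By \cref{thm:jumping}, $M$ is $(n-s, n-t)$-jumping if and only if $k \le n-t$, which rearranges exactly to $t \le \ell + n_{s-\ell+1} - 1$, as desired.

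The two boundary values $m+1 = \binom{n}{s}$ and $t = n$ would be handled by direct inspection. When $m+1 = \binom{n}{s}$ the cascade is $[n]_s$ with $\ell = 1$ and $n_{s-\ell+1} = n$, so $\ell + n_{s-\ell+1} - 1 = n \ge t$ holds automatically, while $M = 0$ has empty cascade and is trivially jumping. When $t = n$ with $m+1 < \binom{n}{s}$ one checks directly that $\k_s(m+1) = 0 = \k_s(m)$ (neither hypergraph can contain an $n$-clique), and correspondingly $n_s \le n-1$ forces $\ell + n_{s-\ell+1} \le n$, so $t \le \ell + n_{s-\ell+1}-1$ fails; both sides of the biconditional fail together. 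I do not foresee any real obstacle here: \cref{lem:ktcolex} and \cref{cor:compl_len} are tailor-made for this complement-and-translate reduction, and what remains is essentially careful bookkeeping together with the minor boundary checks just noted.
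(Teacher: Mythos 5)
Your proposal is correct and follows essentially the same route as the paper's proof: apply \cref{lem:ktcolex} to translate the clique increment into a shadow increment, identify $\binom{n}{s}-m-1$ as the relevant $(n-s,n-t)$-jumping candidate, and combine \cref{cor:compl_len} with \cref{thm:jumping} to get $t \le \ell + n_{s-\ell+1}-1$. Your explicit handling of the boundary cases $m+1=\binom{n}{s}$ and $t=n$ (where \cref{cor:compl_len} and \cref{thm:jumping} do not apply verbatim) is a small bit of extra care the paper leaves implicit, but it does not change the argument.
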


\begin{proof}
    From \cref{lem:ktcolex} we have 
    \[
        \k(\C(m)) = \binom{n}t - \shad[n-t]^{n-s} \parens[\big]{{\textstyle \binom{n}s} -m }.
    \] 
    Thus $\k(m+1)>\k(m)$ exactly if we have 
    \[
        \shad[n-t]^{n-s}({\textstyle \binom{n}s-m)} > \shad[n-t]^{n-s}({\textstyle \binom{n}s}-m-1)
    \] 
    i.e., $\binom{n}s-m-1$ is an $(n-s,n-t)$-jumping number. By \cref{cor:compl_len}, the length of the $(n-s)$-cascade representation of $\binom{n}s-m-1$ is $k = n - \ell - n_{s-\ell+1} + 1$, so by \cref{thm:jumping} we need $n-\ell-n_{s-\ell+1}+1 \le n-t$, i.e., $t\le \ell + n_{s-\ell+1}-1$. 
\end{proof}

\begin{thm} \label{thm:clique_unique}
    Suppose that $1 \le s < t \le n$ and that $0<m<\binom{n}{s}$. Let
    \[ 
        m = \is
    \]
    be the $s$-cascade representation of $m$, having length $\ell$. Then the colex $s$-graph $\H = \C(m)$ is unique up to isomorphism satisfying $\abs{\H}=m$ and $\k(\H)=\k_s(m)$ if either $m\ge \binom{n}s - n+s-1$ holds, or $m< \binom{n}s - n+s-1$ and one of the following two (mutually exclusive) conditions holds: 
	\begin{enumerate}[a),thmparts]
        \item \label{part:compl_short} $t \le \ell+n_{s-\ell+1}-1$ (equivalently $m-1$ is an $(s,t)$-clique-jumping number), or
        \item for some $n-s+2\le n'\le n$ we have $\displaystyle m = \binom{n}s - \binom{n'}{n-s} +1$.
    \end{enumerate}
\end{thm}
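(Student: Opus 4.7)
The plan is to reduce the clique-uniqueness problem to the shadow version \cref{thm:uniq} of F\"uredi and Griggs via the complement-duality that underlies \cref{lem:ktcolex}. For an $s$-graph $\A \subseteq \binom{[n]}{s}$ set
\[
    \A^{*} = \{[n]\setminus E : E \in \binom{[n]}{s} \setminus \A\},
\]
an $(n-s)$-graph on $[n]$ with $m' := \binom{n}{s} - m$ edges. A $t$-set $K$ fails to be a clique of $\A$ exactly when some $s$-set $E \notin \A$ lies inside $K$, equivalently when some $(n-s)$-set $[n]\setminus E \in \A^{*}$ contains $[n]\setminus K$, whence
\[
    \k(\A) = \binom{n}{t} - \abs[\big]{\shad[n-t](\A^{*})}.
\]
Combining this identity with \cref{thm:kkt} applied to $\A^{*}$ and \cref{lem:ktcolex} applied to $m$ gives $\k(\A) \le \k_s(m)$, with equality iff $\A^{*}$ attains the Kruskal-Katona minimum $(n-t)$-shadow among $(n-s)$-graphs with $m'$ edges. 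Because $\A \mapsto \A^{*}$ commutes with vertex permutations and is a bijection, it descends to a bijection of isomorphism classes, so $\A$ is unique up to isomorphism iff $\A^{*}$ is.

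I then invoke \cref{thm:uniq} for $\A^{*}$ with $s' = n-s$ and $q' = n-t$. Its small-$m'$ clause ($m' \le s'+1$) is exactly $m \ge \binom{n}{s} - n + s - 1$, which is the first case of the theorem. In the range $m' > s'+1$, \cref{thm:uniq} yields uniqueness iff $m'$ is either $(s', q')$-shadow-jumping, or of the form $\binom{n'}{n-s}-1$ with $n' \le n$. The second alternative rearranges immediately to $m = \binom{n}{s} - \binom{n'}{n-s} + 1$, and the constraint $m' > s'+1$ forces $n' \ge n-s+2$; this is case (b).

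For case (a), differencing \cref{lem:ktcolex} in $m$ yields
\[
    \k_s(m) - \k_s(m-1) = \shad[n-t]^{n-s}(m'+1) - \shad[n-t]^{n-s}(m'),
\]
so $m'$ is $(n-s, n-t)$-shadow-jumping iff $m-1$ is $(s,t)$-clique-jumping; by \cref{thm:clique_jumping} this is exactly $t \le \ell + n_{s-\ell+1} - 1$. Mutual exclusivity of (a) and (b) is inherited from the analogous statement in \cref{thm:uniq}.

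The main bookkeeping obstacle I anticipate is verifying that $\A^{*} \cong \C[n-s](m')$ actually implies $\A \cong \C(m)$: this is not automatic from the bijection alone, but follows because $\C(m)^{*}$ is itself a minimum-shadow $(n-s)$-graph, and so in the uniqueness range must itself satisfy $\C(m)^{*} \cong \C[n-s](m')$, forcing $\A^{*} \cong \C(m)^{*}$ and hence $\A \cong \C(m)$. The supporting cascade translation between $m$ and $m'$ has already been carried out in \cref{cor:compl_len}, so matching conditions amounts to careful checking rather than new combinatorial content.
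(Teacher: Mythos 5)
Your proposal is correct and follows essentially the same route as the paper: reduce via the complementation duality behind \cref{lem:ktcolex} (your map $\A\mapsto\A^{*}$ is exactly the correspondence used in \cref{sec:kkd}), then invoke the F\"uredi--Griggs uniqueness theorem \cref{thm:uniq} for $(n-s)$-graphs, translating the small-$m'$ clause, the jumping condition (via \cref{cor:compl_len}/\cref{thm:clique_jumping}), and the $\binom{n'}{n-s}-1$ clause into the three cases of the statement. The only difference is that you spell out details the paper delegates to its appendix and its earlier lemmas, including the point that $\C(m)^{*}$ itself attains the shadow minimum, so no gap.
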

\begin{proof}
	By \cref{lem:ktcolex}, the colex $s$-graph $\H = \C(m)$ is unique up to isomorphism satisfying $\abs{\H}=m$ and $\k(\H)=\k_s(m)$ if and only if all $(n-s)$-graphs with $\binom{n}{s}-m$ edges satisfying $\abs{\shad[n-t](\H)} = \shad[n-t]^{n-s}(\binom{n}{s}-m)$ are isomorphic to $\C[n-s](\binom{n}{s}-m)$. Applying \cref{thm:uniq}, and using \cref{cor:compl_len} and \cref{thm:clique_jumping} for condition a), yields the result. In condition b), note that $n' \le n - s + 1$ and $m = \binom{n}s - \binom{n'}{n-s} +1$ imply $m \ge \binom{n}s - n+s-1$.
\end{proof}

\begin{cor} \label{cor:super_short}
    If $m=\binom{n'}{s}$ with $n'\ge t$ then $\C(m)$ is the unique $s$-graph $\H$, up to isomorphism, with $m$ edges achieving $\k(\H) = \k_s(m)$. 
\end{cor}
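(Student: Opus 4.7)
The plan is to deduce this directly from Theorem~\ref{thm:clique_unique}. First I would identify the strict $s$-cascade representation of $m = \binom{n'}{s}$: it has length $\ell = 1$ with $n_s = n'$, so $n_{s-\ell+1} = n'$ and $\ell + n_{s-\ell+1} - 1 = n'$.

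Next, letting $\H$ be an arbitrary $s$-graph with $m$ edges achieving $\k(\H) = \k_s(m)$, and letting $n$ be the size of its vertex set, I would argue that $n \ge n'$ (forced by $m = \binom{n'}{s} \le \binom{n}{s}$). The degenerate case $n = n'$ makes $\H$ the complete $s$-graph on $n'$ vertices, which is isomorphic to $\C(m)$, so nothing remains to show. In the remaining case $n > n'$ we have $0 < m < \binom{n}{s}$, so Theorem~\ref{thm:clique_unique} applies. There, condition~(a) reads $t \le \ell + n_{s-\ell+1} - 1 = n'$, which is precisely the hypothesis $n' \ge t$. Consequently, whether $m$ falls in the large-$m$ clause of the theorem (when $m \ge \binom{n}{s} - n + s - 1$) or in the small-$m$ clause (where condition~(a) now holds), uniqueness of $\C(m)$ up to isomorphism is guaranteed.

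There is no genuine obstacle here: Theorem~\ref{thm:clique_unique} does all the work, and the content of the corollary amounts to the observation that a single-term cascade automatically satisfies the clique-jumping condition~(a) whenever its one entry $n_s$ is at least $t$. The only detail worth highlighting in the write-up is the boundary case $n = n'$, which must be handled separately because Theorem~\ref{thm:clique_unique} is stated under the strict inequality $m < \binom{n}{s}$.
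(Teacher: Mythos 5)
Your proposal is correct and takes essentially the same route as the paper: both reduce to Theorem~\ref{thm:clique_unique} by noting that $m=\binom{n'}{s}$ has the length-one cascade $[n']_s$, so that condition~(a) reads $t\le \ell+n_{s-\ell+1}-1 = n'$, which is exactly the hypothesis $n'\ge t$. You are slightly more careful in explicitly dispatching the boundary case $n=n'$ (where $m=\binom{n}{s}$ falls outside the theorem's stated hypothesis $m<\binom{n}{s}$); the paper leaves this implicit.
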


\begin{proof}
    By \cref{thm:clique_unique}, it suffices to show that either  $\binom{n'}s \ge \binom{n}s-n+s-1$, or condition \ref{part:compl_short} is satisfied. For that condition note that $m = \is = [n']_s$ has length $\ell = 1$ and final entry $n_{s-\ell+1} = n'$, and we have $t \le 1 + n' - 1$ by hypothesis.
\end{proof}

\subsection{Steiner shadows and packing shadows} \label{ssec:steiner}

Here we define and discuss some important hypergraphs that turn out to be optimal examples in some cases of our problem.

\begin{defn}
    A \emph{Steiner system} with parameters $i,r,n$ (abbreviated as an $S(i,r,n)$) is a collection of $r$-sets of some $n$-set $V$ that covers each $i$-set of $V$ exactly once. 
    That is to say, it is an $r$-graph $\A$ on vertex set $V$ such that for all $I\in \binom{V}{i}$ there exists a unique $A\in \A$ such that $I\of A$. 
\end{defn}

    It has been known for a long time (by straightforward counting arguments) that in order for a Steiner system with parameters $i,r,n$ to exist it must be the case that certain divisibility conditions are satisfied. In groundbreaking work Peter Keevash \cite{Keevash} showed (among other things) that for sufficiently large $n$ these conditions are also sufficient.

\begin{thm}[Keevash \cite{Keevash}]\label{thm:keevash}
	For fixed $i\le r$ and for $n$ sufficiently large, an $S(i,r,n)$ exists if and only if for all $0\le j<i$ we have that $(r-j)_{(i-j)}$ divides $(n-j)_{(i-j)}$.
\end{thm}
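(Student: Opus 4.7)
The plan is to treat the two implications separately. The forward direction (necessity of the divisibility conditions) is a classical double-counting exercise valid for every $n$, not just large $n$; the reverse direction is the heart of Keevash's breakthrough paper and requires its full absorbing-plus-nibble machinery.

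For necessity I would fix a $j$-set $J \subseteq V$ with $0 \le j < i$ and double-count the pairs $(I,B)$ with $J \subseteq I \subseteq B$, $|I|=i$, and $B$ a block of a putative $S(i,r,n)$. Counting by $I$: there are $\binom{n-j}{i-j}$ choices of $I \supseteq J$, and by the Steiner property each is contained in a unique block, giving $\binom{n-j}{i-j}$ pairs total. Counting by $B$: if $d_J$ denotes the number of blocks containing $J$, each such block contributes $\binom{r-j}{i-j}$ valid $I$'s. Equating, $d_J = (n-j)_{(i-j)}/(r-j)_{(i-j)}$, which must be a nonnegative integer. Running $j$ from $0$ to $i-1$ gives exactly the stated divisibility conditions.

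For sufficiency I would follow Keevash's absorbing strategy. The argument proceeds in three coupled stages: (i) construct an algebraic \emph{absorber} $\mathcal{A} \subseteq \binom{V}{r}$, assembled from many completable design-like gadgets, with the property that any sufficiently small ``leftover'' $L$ of uncovered $i$-sets which obeys the appropriate divisibility data can be extended, using only blocks inside $\mathcal{A}$, to a genuine $S(i,r,n)$; (ii) set $\mathcal{A}$ aside and apply the R\"odl nibble (or Keevash's iterated absorption refinement) to the rest, producing an almost-Steiner system that misses only a vanishing proportion of $i$-sets; (iii) feed the leftover back into $\mathcal{A}$ and invoke its completion property. The divisibility hypothesis enters both at the absorber stage, by guaranteeing the existence of the small completable gadgets from which $\mathcal{A}$ is built, and at the final completion step, by ensuring the leftover is compatible modulo the necessary local counts.

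The main obstacle is by far the construction of the absorber and the proof that its completion property is robust enough to absorb whatever leftover the nibble produces. This demands delicate algebraic constructions (e.g. ``octahedra'' and random greedy embeddings of completable configurations) together with careful concentration estimates to control the nibble output, and is precisely the deep content of \cite{Keevash}. A genuine proof at this level would therefore defer to that paper rather than attempt to reconstruct the full apparatus.
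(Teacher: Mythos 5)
This statement is quoted from Keevash's paper and is not proved in the present paper at all, so there is no internal proof to compare against; your treatment is the right one. Your double-counting argument for necessity (each $j$-set $J$ lies in exactly $(n-j)_{(i-j)}/(r-j)_{(i-j)}$ blocks, which must be an integer) is correct, and deferring the sufficiency direction to the absorption machinery of \cite{Keevash} is exactly what the paper itself does.
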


\begin{cor}
		For fixed $i\le r$, the set of $n$ for which an $S(i,r,n)$ exists has positive lower density.
\end{cor}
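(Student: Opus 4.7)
My plan is to invoke \cref{thm:keevash}: it suffices to exhibit a set of $n$ of positive lower density for which the $i$ divisibility conditions ``$(r-j)_{(i-j)} \divides (n-j)_{(i-j)}$'' (for $0 \le j < i$) all simultaneously hold. The key observation I would rely on is that a single arithmetic progression can take care of all $i$ conditions at once, so there is no need to intersect $i$ different progressions.

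Concretely, let $M$ be any common multiple of the numbers $(r-j)_{(i-j)}$ for $0 \le j < i$; the simplest choice is $M = r!$, since $(r-j)_{(i-j)} = (r-j)!/(r-i)!$ always divides $r!$. I would then show that every $n$ satisfying $n \equiv r \pmod M$ satisfies all the required divisibilities. The verification is short: $n \equiv r \pmod M$ implies $n-k \equiv r-k \pmod M$ for every integer $k \ge 0$, and multiplying the $i-j$ congruences for $k=j,j+1,\dots,i-1$ yields $(n-j)_{(i-j)} \equiv (r-j)_{(i-j)} \pmod M$. Since $(r-j)_{(i-j)} \divides M$, it also divides $(n-j)_{(i-j)}$, as required.

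Applying \cref{thm:keevash}, the Steiner system $S(i,r,n)$ then exists for all sufficiently large $n$ in the residue class $r \pmod M$. This residue class contains at least $N/M - O(1)$ elements of $[N]$, so the set of $n$ for which $S(i,r,n)$ exists has lower density at least $1/M > 0$. The ``main obstacle'' here is really only the observation that congruence mod one common multiple $M$ packages all $i$ divisibility constraints together; once this is noticed, the rest is a one-line density count.
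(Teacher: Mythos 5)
Your proof is correct and follows essentially the same route as the paper: restrict to a single arithmetic progression of $n$ on which all the divisibility conditions of \cref{thm:keevash} hold automatically, then apply that theorem. The paper's only difference is a slightly slicker choice of progression, namely $n \equiv i-1 \pmod{r_{(i)}}$ (so that the common factor $n-i+1$ of all the products $(n-j)_{(i-j)}$ is divisible by $r_{(i)}$), which yields the marginally better density bound $1/r_{(i)}$ in place of your $1/M$.
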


\begin{proof}
    The divisibility conditions are certainly satisfied if $n-i+1$ is divisible by $r_{(i)}$, so the lower density of $\setof{n}{\text{an $S(i,r,n)$ exists}}$ is at least $1/r_{(i)}$.
\end{proof}

We can weaken the definition of a Steiner system to require  only that each $i$-set is covered at most once (rather than exactly once), giving the following definition. 

\begin{defn} An \emph{$i$-packing} of $r$-sets (abbreviated as a $P(i,r)$), also called a \emph{partial Steiner system}, is a collection of $r$-sets of some set $V$ that covers each $i$-set of $V$ at most once. That is to say, it is an $r$-graph $\A$ on vertex set $V$ such that for all $I\in \binom{V}{i}$ there exists at most one $A\in \A$ such that $I\of A$. Equivalently, any distinct $r$-sets $A, B \in \A$ have $\abs{A\cap B} < i$.
\end{defn}

Existence of $P(i,r)$'s is guaranteed for all values of the parameters. For instance, a disjoint collection of $r$-sets is a $P(i,r)$ for all $i \ge 1$.

The hypergraphs that will be useful to us are not only Steiner systems and packings themselves, but their shadows on layers intermediate between $i$ and $r$. 

\begin{defn}
	A \emph{Steiner shadow} with parameters $i,r,n,s$, abbreviated $\shad[s]S(i,r,n)$, is the $s$-shadow of an $S(i,r,n)$. A \emph{packing shadow} with parameters $i,r,s$, abbreviated $\shad[s]P(i,r)$, is the $s$-shadow of an $i$-packing of $r$-sets.
\end{defn}

We will show later that Steiner shadows and packing shadows provide examples showing that the signpost results we prove are best possible (at least for some values of the parameters). The following lemma computes relevant parameters of these hypergraphs.

\begin{lem}\label{lem:packing}
	If $1\le i < s < r$ and $\A$ is a $P(i,r)$, then, if we write $\H$ for the $s$-graph $\shad[s](\A)$, the following hold.
    \begin{enumerate}[a),thmparts]
        \item \label{part:layers} For all $i\le j \le r$ we have $\abs{\shad[j](\A)} = \binom{r}j \abs{\A}$. In particular, $\H$ has $\binom{r}s\abs{\A}$ edges, and for all $s \le t \le r$ we have $\k(\H) = \abs{\shad[t](\A)} = \binom{r}t \abs{\A}$.
        \item \label{part:cliques} If $I\in \shad[i](\H)$ then $\H(I)\cong K^{(s-i)}_{r-i}$, which implies that $d_\H(I) = \binom{r-i}{s-i}$ and $\k[t-i](\H(I)) = \binom{r-i}{t-i}$. In particular $\D_i(\H)=\binom{r-i}{s-i}$.
    \end{enumerate}
    In particular if $\H$ is a Steiner shadow $\shad[s]S(i,r,n)$ then parts \ref{part:layers} and \ref{part:cliques} hold with $\abs{\A} = \binom{n}i / \binom{r}{i}$, and $\shad[i](\H) = \nlayer{i}$.
\end{lem}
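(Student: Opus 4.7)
My plan for part \ref{part:layers} is a straightforward double-counting argument. Count pairs $(A, J)$ with $A \in \A$ and $J \in \binom{A}{j}$: one direction immediately contributes $\binom{r}{j} \abs{\A}$. The packing hypothesis is exactly what is needed to show each $J \in \shad[j](\A)$ arises from a unique $A$, since $j \ge i$ forces any $J$ contained in two distinct members of $\A$ to produce an $i$-subset in their intersection. Setting $j=s$ gives the edge count of $\H$. For the clique count, the key identification is $\K[t](\H) = \shad[t](\A)$ whenever $s \le t \le r$. The inclusion $\shad[t](\A) \of \K[t](\H)$ is immediate from $\H = \shad[s](\A)$. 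For the reverse, I would take a $t$-clique $T$ together with any $I \in \binom{T}{i}$; every $s$-subset $S \of T$ containing $I$ lies in $\H$, hence in some $A_S \in \A$ that also contains $I$, and the packing property (uniqueness of the $A \in \A$ containing $I$) forces all $A_S$ to coincide with one $A$. Using $t \ge s$, every vertex of $T$ sits in such an $S$, so $T \of A$ and thus $T \in \shad[t](\A)$. Then part \ref{part:layers} yields $\k(\H) = \binom{r}{t} \abs{\A}$.

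For part \ref{part:cliques} I would analyze $\H(I)$ directly. When $I \in \shad[i](\H)$, packing again gives a unique $A \in \A$ with $I \of A$; the $s$-edges of $\H$ containing $I$ are exactly the $s$-subsets of $A$ containing $I$, so $\H(I) = \binom{A \wo I}{s-i} \cong K_{r-i}^{(s-i)}$. The claimed $d_\H(I) = \binom{r-i}{s-i}$ and $\k[t-i](\H(I)) = \binom{r-i}{t-i}$ then fall out as the edge and $(t-i)$-subset counts of a complete $(s-i)$-graph on $r-i$ vertices, and $\D_i(\H) = \binom{r-i}{s-i}$ is immediate since $I \notin \shad[i](\H)$ has degree $0$.

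The Steiner-shadow particulars are bookkeeping: double-counting incidences $(A, I)$ with $I \in \binom{A}{i}$ gives $\abs{\A} \binom{r}{i} = \binom{n}{i}$, hence $\abs{\A} = \binom{n}{i}/\binom{r}{i}$. Transitivity of shadows yields $\shad[i](\H) = \shad[i](\A)$, which equals $\nlayer{i}$ by the Steiner property that every $i$-subset of $[n]$ lies in some $A \in \A$. I do not anticipate real obstacles; the only point requiring care is the verification $T \of A$ in the clique direction of part \ref{part:layers}, which hinges on noting that, since $t \ge s$, every vertex of $T$ lies in some $s$-subset of $T$ containing the fixed $i$-subset $I$.
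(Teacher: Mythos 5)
Your proposal is correct and is precisely the ``straightforward'' argument the paper leaves unwritten: the packing property gives a unique member of $\A$ containing any $j$-set with $j\ge i$ (yielding the shadow counts and the identification $\K[t](\H)=\shad[t](\A)$, where your care about every vertex of a $t$-clique lying in an $s$-subset through the fixed $I$ is exactly the right point), and the same uniqueness makes each neighborhood $\H(I)$ a complete $(s-i)$-graph on $r-i$ vertices, with the Steiner case following by counting incidences and transitivity of shadows. The only pedantic caveat, which the paper also ignores, is that $\D_i(\H)=\binom{r-i}{s-i}$ (rather than $\le$) tacitly assumes $\A\neq\emptyset$.
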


\begin{proof}
	Straightforward.
\end{proof}

We use the following lemma to prove the two corollaries following it: that two conditions on clique counts in neighborhoods force a hypergraph to be a packing shadow or a Steiner shadow respectively.

\begin{lem}\label{lem:extremal}
	Suppose that $i\ge 1$, that $i+2 \le s \le t \le r$, and that $\H$ is an $s$-graph with $\D_i(\H) \le \binom{r-i}{s-i}$. If $\H(I) \cong K^{(s-i)}_{r-i}$ for all $I \in \shad[i](\H)$, then $\H$ is a packing shadow $\shad[s]P(i,r)$.
\end{lem}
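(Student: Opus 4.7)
My plan is to produce an $i$-packing $\A$ of $r$-sets such that $\H = \shad[s](\A)$. For each $I \in \shad[i](\H)$, let $W_I$ denote the vertex set of the neighborhood $\H(I)$; by hypothesis $\abs{W_I} = r-i$ and $\H(I) = \binom{W_I}{s-i}$. Define $A_I \defeq I \cup W_I$, an $r$-set (since $W_I \cap I = \emptyset$ by definition of the neighborhood). The eventual packing will be $\A = \setof{A(E)}{E \in \H}$, where $A(E)$ is built from $E$ as described next.

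The first key step is to show that for every $E \in \H$, the $r$-set $A_I$ is the same for all $i$-subsets $I \of E$; this common value we call $A(E)$. Since any two $i$-subsets of $E$ are connected by a chain in $\binom{E}{i}$ whose consecutive members differ in a single element, it suffices to treat the case $\abs{I \cap I'} = i-1$. Write $I = J \cup \{a\}$ and $I' = J \cup \{b\}$ with distinct $a, b \in E$; since $E \wo I \in \H(I) \of \binom{W_I}{s-i}$, we have $b \in W_I$. Given $v \in W_I$ with $v \ne b$, the distinct elements $v, b \in W_I$ lie in a common $(s-i)$-subset $F$ of $W_I$ (this is where the hypothesis $s-i \ge 2$, i.e., $s \ge i+2$, is essential). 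The edge $I \cup F \in \H$ then contains $I' \cup \{v\} = J \cup \{b, v\}$, so $v \in W_{I'}$; taking any $(s-i)$-subset $F' \of W_I$ containing $b$ gives an edge $I \cup F'$ containing $I' \cup \{a\} = I \cup \{b\}$, so $a \in W_{I'}$. Thus $A_I \of A_{I'}$, and equality follows by symmetry.

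Next I will show $\binom{A(E)}{s} \of \H$ for every $E \in \H$ by an iterated swap argument. Given $E' \in \binom{A(E)}{s}$ with $E' \ne E$, pick $u \in E \wo E'$ and $v \in E' \wo E$, and set $E_1 = (E \wo \{u\}) \cup \{v\}$. Choosing any $i$-subset $I \of E \wo \{u\}$ (possible since $s-1 \ge i$), we have $v \in A(E) \wo I = W_I$, so $E_1 \wo I$ is an $(s-i)$-subset of $W_I$, hence lies in $\H(I)$, proving $E_1 \in \H$. Since $\abs{E \cap E_1} = s-1 \ge i+1$, picking an $i$-subset of $E \cap E_1$ gives $A(E_1) = A(E)$, so I can iterate on $E_1$, noting $\abs{E_1 \cap E'} = \abs{E \cap E'} + 1$; after finitely many swaps we reach $E'$, so $E' \in \H$.

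Finally, to check $\A$ is a $P(i,r)$: if $A, A' \in \A$ satisfy $\abs{A \cap A'} \ge i$, take any $i$-subset $I \of A \cap A'$. By the previous paragraph, $\binom{A \wo I}{s-i}$ and $\binom{A' \wo I}{s-i}$ are both subsets of $\H(I)$, each of size $\binom{r-i}{s-i}$; but $\abs{\H(I)} \le \D_i(\H) \le \binom{r-i}{s-i}$ forces both to equal $\H(I)$, so $A \wo I = A' \wo I$ and hence $A = A'$. Combining $\H \of \shad[s](\A)$ (from $E \of A(E)$) with $\shad[s](\A) \of \H$ (from $\binom{A(E)}{s} \of \H$) yields $\H = \shad[s](\A)$, as desired. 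The main obstacle is establishing that $A(E)$ is well-defined, which is precisely where the hypothesis $s \ge i+2$ is needed.
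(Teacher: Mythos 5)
Your proof is correct, and it is built around the same basic construction as the paper's: the candidate $r$-sets are $I \cup W_I$, where $W_I$ is the vertex set of the neighborhood $\H(I)$, and connectivity via $i$-sets differing in one element does the main work. The difference is in how much invariance you establish and where the remaining facts come from. The paper proves the stronger statement that $R_J = R_I$ for \emph{every} $i$-subset $J$ of the $r$-set $R_I = I \cup W_I$, by a short contradiction: if $R_J \ne R_I$, an $s$-set inside $R_I$ containing $I \cup J$ together with a point of $R_I \setminus R_J$ (this is where $s \ge i+2$ enters there) would have to lie both in $\H$ (since it sits between $I$ and $R_I$) and outside $\H$ (since it contains $J$ but is not inside $R_J$). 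From that stronger invariance, both $\binom{R_I}{s} \subseteq \H$ and the packing property follow in one line each, with no separate appeal to the degree bound. You instead prove only the edge-local invariance ($A_I = A_{I'}$ for $i$-subsets of a common edge), which is why you need the extra exchange-walk argument to get $\binom{A(E)}{s} \subseteq \H$, and why you invoke $\Delta_i(\H) \le \binom{r-i}{s-i}$ to prove the packing property. Both routes are sound; the paper's version makes the endgame shorter, while yours stays closer to explicit element-chasing inside single edges. One small remark: your use of the degree bound in the last step, while perfectly legitimate (it is a hypothesis), is avoidable --- once $\binom{A}{s}, \binom{A'}{s} \subseteq \H$ and $I \subseteq A \cap A'$ is an $i$-set, $I$ lies in an edge, so the hypothesis $\H(I) \cong K^{(s-i)}_{r-i}$ already forces $A \setminus I = W_I = A' \setminus I$.
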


\begin{proof}
	For all sets $I \in \shad[i](\H)$ we write $A_I$ for the vertex set of $\H(I)$. Then $R_I=A_I\cup I$ has the property that for all $s$-sets $S \supseteq I$ we have $S\in \H$ if and only if $ S\of R_I$. We let $\R = \setof{R_I}{I\in \shad[i](\H)}$. We'll show that $\R$ is a $P(i,r)$ and that $\H=\shad[s](\R)$. 
	
	First let's show that if $I\in \shad[i](\H)$ and $J \in \binom{R_I}i$ then also $J\in \shad[i](\H)$ and $R_J=R_I$. We'll first prove the special case where $\abs{J\cap I}= i-1$. If $R_I \neq R_J$ then we can choose an $s$-set $S$ in $R_I$ containing $I\cup J$ and an element of $R_I\wo R_J$, since $s \ge i+2$. We have $I \of S \of R_I$, so $S \in \H$. Since $J\of S$ we have $J\in \shad[i](\H)$. Finally we have $J \of S \not\of R_J$, so $S \notin \H$. This contradiction implies that $R_I = R_J$. For any $J\in \binom{R_I}i$ there exists a sequence $I=J_0, J_1, \dots, J_k=J$ of $i$-sets of $R_I$ such that $\abs{J_\ell\cap J_{\ell+1}}=i-1$, and by the argument above we get that $R_{J_\ell}=R_I$ for all $\ell$. 
	
	From this we can show that if $I\in \shad[i](\H)$ then $\binom{R_I}{s} \of \H$. To see this, consider $S \in \binom{R_I}{s}$ and pick $J \in \binom{S}{i}$. Since $J\of S\of R_I=R_J$ we have $S\in \H$.
	
	Finally, set $\R=\set{R_I: I \in \shad[i](\H)}$ as above. To show that $\R$ is a $P(i,r)$, suppose $R_I$ and $R_{I'}$ are both in $\R$, and $J \of R_I\cap R_{I'}$ is an $i$-set. Then by the result in the second paragraph $R_I=R_J=R_{I'}$.  
	The last thing we need to show is that $\H=\shad[s](\R)$. If $S\in \H$ then for any $i$-set of $S$ we have $I\of S\of R_I$, so $S\in \shad[s](\R)$. On the other hand if $S\in \shad[s](\R)$ then there exists $I\in \shad[i](\H)$ with $S\of R_I$ and hence $S\in \H$ by the result in the third paragraph. 
\end{proof}

\begin{cor}\label{cor:extremalpacking}
    Suppose that $i\ge 1$, that $i+2 \le s \le t \le r$, and that $\H$ is an $s$-graph with $\D_i(\H) \le \binom{r-i}{s-i}$. If we have $\k[t-i](\H(I)) = \binom{r-i}{t-i}$ for every $i$-set $I$ contained in an edge of $\H$, then $\H$ is a packing shadow $\shad[s]P(i,r)$.
\end{cor}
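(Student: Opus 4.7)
The plan is to reduce the hypothesis on clique counts to the structural hypothesis of \cref{lem:extremal} (namely $\H(I)\cong K^{(s-i)}_{r-i}$ for every $I\in \shad[i](\H)$) and then invoke that lemma directly. The key step is a pointwise analysis at each $I$ using the Lov\'asz form of Kruskal-Katona together with the uniqueness in Kruskal-Katona established in \cref{cor:super_short}.

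Fix $I\in \shad[i](\H)$ and let $\J=\H(I)$, an $(s-i)$-graph. First I would show $\abs{\J}=\binom{r-i}{s-i}$. We have $\abs{\J}=d_\H(I)\le \binom{r-i}{s-i}$, so we can write $\abs{\J}=\binom{y}{s-i}$ for some real $y$ with $s-i-1\le y\le r-i$. By \cref{thm:LKK} applied to $\J$, we get $\k[t-i](\J)\le \binom{y}{t-i}$ (with $\k[t-i](\J)=0$ if $y<t-i$). The hypothesis gives $\k[t-i](\J)=\binom{r-i}{t-i}>0$ (using $r\ge t$), so $y\ge t-i$ and $\binom{y}{t-i}\ge \binom{r-i}{t-i}$; strict monotonicity of $\binom{y}{t-i}$ for $y\ge t-i-1$ then forces $y=r-i$. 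Hence $\abs{\J}=\binom{r-i}{s-i}$ and $\k[t-i](\J)=\binom{r-i}{t-i}$.

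Next I would identify $\J$ up to isomorphism. Note that $\C[s-i]\bigl(\binom{r-i}{s-i}\bigr)=K^{(s-i)}_{r-i}$, which has exactly $\binom{r-i}{t-i}$ many $(t-i)$-cliques, so $\k_{s-i}\bigl(\binom{r-i}{s-i}\bigr)=\binom{r-i}{t-i}$. Thus $\J$ achieves the Kruskal-Katona maximum for $(t-i)$-cliques given its number of edges. Applying \cref{cor:super_short} with parameters $(s-i, t-i, r-i)$ in place of $(s,t,n')$ — valid because $r-i\ge t-i\ge 1$ and $s-i\ge 2$ by the standing hypothesis $s\ge i+2$ — yields $\J\cong K^{(s-i)}_{r-i}$.

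Since this holds for every $I\in \shad[i](\H)$, the hypotheses of \cref{lem:extremal} are met, and we conclude that $\H=\shad[s]P(i,r)$ for some $i$-packing of $r$-sets. There is no real obstacle here: the only mildly delicate point is the real-valued Lov\'asz argument, which packages the required monotonicity so that integrality considerations for $\abs{\J}$ do not interfere with deducing $\abs{\J}=\binom{r-i}{s-i}$ exactly.
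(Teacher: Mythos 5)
Your proposal is correct and follows essentially the same route as the paper: reduce to showing $\H(I)\cong K^{(s-i)}_{r-i}$ for every $I\in\shad[i](\H)$ via the uniqueness statement \cref{cor:super_short}, and then invoke \cref{lem:extremal}. The only difference is that you spell out explicitly (via \cref{thm:LKK}) why the clique hypothesis forces $d_\H(I)=\binom{r-i}{s-i}$ exactly, a step the paper leaves implicit.
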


\begin{proof} \cref{cor:super_short} implies that for all $I \in \shad[i](\H)$ we have $\H(I) \cong K^{(s-i)}_{r-i}$. \cref{lem:extremal} completes the proof.
\end{proof}

The corresponding result for Steiner shadows also follows.
\begin{cor} \label{cor:extremalsteiner}
    Suppose that $i\ge 1$, that $i+2 \le s \le t \le r$, and that $\H$ is an $s$-graph with $\D_i(\H) \le \binom{r-i}{s-i}$. If we have $\k[t-i](\H(I)) = \binom{r-i}{t-i}$ for every $i$-set $I$ of vertices of $\H$, then $\H$ is a Steiner shadow $\shad[s]S(i,r,n)$.
\end{cor}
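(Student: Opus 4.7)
The plan is to reduce to Corollary \ref{cor:extremalpacking} to obtain a partial Steiner system, and then to upgrade it to a genuine Steiner system using the fact that the clique-count hypothesis is now required for \emph{every} $i$-set of vertices, not merely those appearing in the shadow of $\H$.

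First, observe that the hypotheses of Corollary \ref{cor:extremalsteiner} are strictly stronger than those of Corollary \ref{cor:extremalpacking}: any $i$-set contained in an edge of $\H$ is, in particular, an $i$-set of vertices of $\H$. Hence Corollary \ref{cor:extremalpacking} applies directly and produces a partial Steiner system $\R$ (a $P(i,r)$) on the vertex set of $\H$ such that $\H = \shad[s](\R)$.

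It remains to show that $\R$ is actually a Steiner system $S(i,r,n)$, where $n$ is the number of vertices of $\H$. Let $I$ be an arbitrary $i$-subset of the vertex set of $\H$. By hypothesis, $\k[t-i](\H(I)) = \binom{r-i}{t-i}$, and since $i < t \le r$ we have $\binom{r-i}{t-i} \ge 1$. In particular $\H(I)$ is nonempty, so $I \in \shad[i](\H)$; that is, $I$ is contained in some edge of $\H$, which in turn is contained in some $R \in \R$. Combining this with the packing property (each $i$-set lies in \emph{at most} one edge of $\R$), we conclude that each $i$-set lies in exactly one $R \in \R$, so $\R$ is an $S(i,r,n)$ and $\H = \shad[s](\R)$ is a Steiner shadow.

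There is no substantial obstacle here; the argument is essentially a one-line upgrade of Corollary \ref{cor:extremalpacking}. The only point to check is that the strengthened quantifier "for every $i$-set of vertices of $\H$" is exactly what forces every $i$-set into $\shad[i](\H)$, which is the precise additional content needed to promote a partial Steiner system to a Steiner system.
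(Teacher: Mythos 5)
Your proof is correct and follows essentially the same route as the paper: both reduce to \cref{cor:extremalpacking} and then use $\k[t-i](\H(I)) = \binom{r-i}{t-i} \ge 1$ (valid since $t \le r$) to show every $i$-set of vertices lies in the shadow, hence in exactly one block of the packing, which upgrades the $P(i,r)$ to an $S(i,r,n)$. The only difference is cosmetic: the paper first notes $\shad[i](\H)=\binom{V}{i}$ and then invokes the packing corollary, while you invoke it first and upgrade afterwards.
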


\begin{proof}
	Let $V$ be the vertex set of $\H$. Given $I \in \binom{V}i$ we have $\k[t-i](\H(I)) = \binom{r-i}{t-i}$ and $t \le r$, so $\k[t-i](\H(I)) \ge 1$. Thus $\shad[i](\H)=\binom{V}i$. By \cref{cor:extremalpacking}, $\H$ is a packing shadow $\shad[s]P(i,r)$ with $\shad[i](\H)=\binom{V}i$, i.e a Steiner shadow $\shad[s]S(i,r,n)$, where $n=\abs{V}$.
\end{proof}

\subsection{Equality cases for the three problems}

Here we characterize the extremal hypergraphs for some cases of each of the three problems from \Cref{sec:signpost}. All the cases we discuss are ones where $\H$ is an $s$-graph and $\D_i(\H) \le \binom{r-i}{s-i}$ for some $r\ge s$ and $i<s$. We also show, for all three problems, that for these particular degree bounds our results are asymptotically tight.

\subsubsection{Hypergraphs with a fixed number of vertices} 
\label{ssub:vertices}

For degree bounds of the form $\binom{r-i}{s-i}$, with $r$ an integer, we show that Steiner shadows achieve the bound from \cref{thm:Q2gen}, and that they are the only $s$-graphs that do when $i \le s-2$. We do not know whether other $s$-graphs achieve the bound when $i = s-1$.

\begin{thm}\label{thm:achieves}
	Let $1 \le i < s \le t \le r$, where $r$ is an integer, and suppose that $\H$ is an $s$-graph on $n$ vertices.
	\begin{enumerate}[a)]
		\item If $\H$ is a Steiner shadow $\shad[s]S(i,r,n)$, then $\D_i(\H) = \binom{r-i}{s-i}$ and $\k(\H) = \frac{\binom{n}i}{\binom{r}i} \binom{r}{t}$. I.e., $\H$ achieves the upper bound in \cref{thm:Q2gen}.
		\item If we further assume that $s \ne i+1$, then $\H$ satisfies both $\D_i(\H) \le \binom{r-i}{s-i}$ and $\k(\H) = \frac{\binom{n}i}{\binom{r}i} \binom{r}{t}$ if and only if $\H$ is a Steiner shadow $\shad[s]S(i,r,n)$.
	\end{enumerate}
\end{thm}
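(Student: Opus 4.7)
The strategy is to assemble results already established in the paper; the theorem is essentially a packaging exercise.

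For part (a), the plan is a direct computation using Lemma \ref{lem:packing}. Since $\H = \shad[s]S(i,r,n)$ arises from a Steiner system $\A$ on an $n$-set $V$, the ``in particular'' clause of that lemma supplies $\abs{\A} = \binom{n}{i}/\binom{r}{i}$ and $\shad[i](\H) = \binom{V}{i}$. Part (b) of the lemma then yields $\D_i(\H) = \binom{r-i}{s-i}$, and part (a) yields $\k(\H) = \binom{r}{t}\abs{\A} = \binom{r}{t}\binom{n}{i}/\binom{r}{i}$. To verify that this equals the upper bound in Theorem \ref{thm:Q2gen}, I will apply Lemma \ref{lem:facts} to the length-one $(s-i)$-cascade $[r-i]_{s-i}$, obtaining $\k[t-i]_{s-i}\bigl(\binom{r-i}{s-i}\bigr) = \binom{r-i}{t-i}$, so the bound in question is $\binom{n}{i}\binom{r-i}{t-i}/\binom{t}{i}$. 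The identity $\binom{r}{t}\binom{t}{i} = \binom{r}{i}\binom{r-i}{t-i}$ (both sides count pairs $A \subseteq B \subseteq [r]$ with $\abs{A} = i$ and $\abs{B} = t$) rewrites the bound as $\binom{r}{t}\binom{n}{i}/\binom{r}{i}$, matching the value computed above.

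The ``if'' direction of part (b) is precisely part (a). For ``only if'', assume $\D_i(\H) \le \binom{r-i}{s-i}$ and $\k(\H) = \binom{r}{t}\binom{n}{i}/\binom{r}{i}$. By the computation just completed, $\k(\H)$ equals the upper bound of Theorem \ref{thm:Q2gen}, so the equality clause of that theorem applies: for every $I \in \binom{V}{i}$ the neighborhood $\H(I)$ contains $\k[t-i]_{s-i}\bigl(\binom{r-i}{s-i}\bigr) = \binom{r-i}{t-i}$ cliques of size $t-i$. Because $s > i$ and $s \ne i+1$ force $s \ge i+2$, the hypotheses of Corollary \ref{cor:extremalsteiner} are all satisfied, and that corollary concludes $\H \cong \shad[s]S(i,r,n)$.

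I do not expect any real obstacle: every step above is a direct invocation of a result already set up in the paper, with only a single-term cascade evaluation and the standard binomial identity to compute by hand. The exclusion of $s = i+1$ in part (b) is inherited from Corollary \ref{cor:extremalsteiner}, which in turn relies on Lemma \ref{lem:extremal}; this limitation matches the open question flagged in the prose just above the theorem statement.
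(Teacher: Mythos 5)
Your proposal is correct and follows essentially the same route as the paper's proof: Lemma \ref{lem:packing} for part (a), the binomial identity $\binom{r}{t}\binom{t}{i} = \binom{r}{i}\binom{r-i}{t-i}$ to match the bound of Theorem \ref{thm:Q2gen}, and for part (b) the equality condition of Theorem \ref{thm:Q2gen} together with Corollary \ref{cor:extremalsteiner}. The only cosmetic difference is that you cite Lemma \ref{lem:facts} explicitly for the single-term cascade evaluation, whereas the paper states it directly.
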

Note that by \cref{thm:keevash} the set of $n$ for which Steiner shadows $\shad[s]S(i,r,n)$ exist has positive lower density.

\begin{proof}
	For both parts, note that $\k[t-i]_{s-i}(\binom{r-i}{s-i}) = \binom{r-i}{t-i}$ and $\binom{r-i}{t-i}\binom{r}{i} = \binom{r}{t}\binom{t}{i}$ (as in \cref{cor:explicit}), so $\frac{\binom{n}i}{\binom{r}i} \binom{r}{t} =\binom{n}i \frac{\k[t-i]_{s-i}(\D)}{\binom{t}i}$.
	
	First, suppose $\H = \shad[s](\A)$, where $\A$ is an $S(i,r,n)$. By \cref{lem:packing}, 
	$\H$ has $\D_i(\H)=\binom{r-i}{s-i}$ and $\k(\H) =  \frac{\binom{n}i}{\binom{r}i} \binom{r}{t}$.
	
	Now, suppose $s\ne i+1$ (so $3 \le i+2 \le s$) and $\H$ is an $s$-graph on $n$ vertices such that $\D_i(\H) \le \binom{r-i}{s-i}$ and $\k(\H) = \frac{\binom{n}i}{\binom{r}i} \binom{r}{t}$. 
	By the condition for equality in \cref{thm:Q2gen}, for each $I\in \nlayer{i}$ the neighborhood $\H(I)$ contains $\binom{r-i}{t-i}$ $(t-i)$-cliques, and so by \cref{cor:extremalsteiner}, $\H$ is a Steiner shadow $\shad[s]S(i,r,n)$.
\end{proof}

Now we show that the upper bounds given by \cref{thm:Q2gen} and \cref{cor:explicit} are asymptotically tight. We make use of the famous result where R\"odl's nibble was first introduced. 

\begin{thm}[R\"odl \cite{rodl}]\label{thm:rodl}
	The maximum number of edges in an $i$-packing of $r$-sets in $[n]$ is  $(1-o_n(1))\frac{\binom{n}{i}}{\binom{r}{i}}$.
\end{thm}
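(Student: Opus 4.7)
The upper bound follows by a direct double counting argument: if $\A$ is an $i$-packing of $r$-sets in $[n]$, each edge $A \in \A$ contains $\binom{r}{i}$ distinct $i$-subsets, and by the packing condition these $i$-subsets are pairwise disjoint across distinct edges of $\A$. Hence $\abs{\A}\binom{r}{i} \le \binom{n}{i}$, giving $\abs{\A} \le \binom{n}{i}/\binom{r}{i}$.

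For the lower bound, the plan is to reformulate the problem as finding a nearly perfect matching in a suitable uniform hypergraph and apply R\"odl's nibble theorem (or equivalently the Frankl--R\"odl / Pippenger--Spencer matching theorem). Introduce the auxiliary $\binom{r}{i}$-uniform hypergraph $\mathcal{G}$ whose vertex set is $\binom{[n]}{i}$ and whose edge set is $\setof{\binom{R}{i}}{R \in \binom{[n]}{r}}$. A matching in $\mathcal{G}$ corresponds bijectively to a collection of $r$-sets in $[n]$ no two of which share an $i$-subset, i.e., to an $i$-packing of $r$-sets. Since every matching has size at most $\binom{n}{i}/\binom{r}{i}$, it suffices to exhibit a matching in $\mathcal{G}$ of size $(1-o_n(1))\binom{n}{i}/\binom{r}{i}$.

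To invoke the nibble, I would verify the standard regularity and codegree hypotheses. The hypergraph $\mathcal{G}$ is $D$-regular with $D = \binom{n-i}{r-i}$, because each $i$-set in $[n]$ is contained in exactly that many $r$-sets. For any two distinct $I, J \in V(\mathcal{G})$, writing $k = \abs{I \cup J} \ge i+1$, the codegree of $\set{I,J}$ equals $\binom{n-k}{r-k}$ if $k \le r$ and $0$ otherwise; in particular it is at most $\binom{n-i-1}{r-i-1} = O(n^{r-i-1})$, which is $o(D)$ as $n \to \infty$ with $i < r$ fixed. The nibble theorem then produces a matching covering $(1-o_n(1))\abs{V(\mathcal{G})}$ vertices, and dividing by $\binom{r}{i}$ gives the desired bound. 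The main obstacle is the nibble argument itself: one analyzes a semi-random greedy process in which, at each step, a small random sample of edges is selected and those remaining conflict-free are retained, while martingale concentration is used to track the nearly uniform decay of the surviving degrees. This technical core is precisely the content of R\"odl's original paper, so in practice one cites it as a black box rather than reproving it here.
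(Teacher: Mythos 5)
Your proposal is correct: the counting upper bound is right, and the reduction of the lower bound to a near-perfect matching in the auxiliary $\binom{r}{i}$-uniform hypergraph (with the regularity $D=\binom{n-i}{r-i}$ and codegree $O(n^{r-i-1})=o(D)$ checks) is the standard way to obtain it from the nibble. The paper itself gives no proof of this theorem --- it is quoted directly from R\"odl's paper --- and since your argument likewise delegates the semi-random nibble core to that reference, your treatment is essentially the same as the paper's.
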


\begin{thm}\label{thm:o1vx}
	For $1 \le i < s \le t \le r \le n$, let $N$ be the maximum value of $\k(\H)$ over all $s$-graphs $\H$ on $n$ vertices with $\D_i(\H) \le \binom{r-i}{s-i}$. Then
	\[
	N = (1-o_n(1))\frac{\binom{n}i}{\binom{r}i} \binom{r}{t} .
	\]
\end{thm}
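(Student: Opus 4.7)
The plan is to combine the upper bound from \cref{thm:Q2gen} (equivalently \cref{cor:explicit}) with a matching lower bound obtained from a near-optimal $i$-packing of $r$-sets, using R\"odl's \cref{thm:rodl}.

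For the upper bound, observe that when $\D = \binom{r-i}{s-i}$ with $r$ an integer, we have $\k[t-i]_{s-i}(\D) = \binom{r-i}{t-i}$ by \cref{lem:facts}, and the identity $\binom{r-i}{t-i}\binom{r}{i} = \binom{r}{t}\binom{t}{i}$ yields
\[
\binom{n}i \frac{\k[t-i]_{s-i}(\D)}{\binom{t}i} = \frac{\binom{n}i}{\binom{r}i} \binom{r}{t}.
\]
Hence \cref{thm:Q2gen} gives $N \le \frac{\binom{n}i}{\binom{r}i} \binom{r}{t}$ for every $n$.

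For the lower bound, I would apply \cref{thm:rodl} to produce an $i$-packing $\A$ of $r$-sets in $[n]$ with $\abs{\A} = (1-o_n(1))\binom{n}{i}/\binom{r}{i}$, then take $\H = \shad[s](\A)$ to be the corresponding packing shadow. By \cref{lem:packing}\ref{part:cliques}, we have $\D_i(\H) = \binom{r-i}{s-i}$, so $\H$ is admissible. By \cref{lem:packing}\ref{part:layers}, $\k(\H) = \binom{r}{t}\abs{\A} = (1-o_n(1))\frac{\binom{n}i}{\binom{r}i} \binom{r}{t}$, matching the upper bound asymptotically.

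There is no real obstacle here: both directions are essentially immediate consequences of results already established in the paper (\cref{thm:Q2gen}, \cref{lem:packing}) together with R\"odl's theorem. The only mild care needed is verifying the binomial-coefficient identity that translates the bound in \cref{thm:Q2gen} into the form $\frac{\binom{n}i}{\binom{r}i}\binom{r}t$, which is routine.
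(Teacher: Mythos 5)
Your proposal is correct and follows essentially the same route as the paper: the upper bound comes from \cref{thm:Q2gen} (via \cref{cor:explicit}), and the lower bound comes from taking the $s$-shadow of a near-optimal $i$-packing supplied by R\"odl's theorem, with \cref{lem:packing} giving the degree and clique counts. Nothing further is needed.
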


\begin{proof}    
    Let $\A$ be an $i$-packing of $r$-sets in $V$ with $\abs{\A} = (1-o_n(1))\frac{\binom{n}{i}}{\binom{r}{i}}$, as guaranteed by \cref{thm:rodl}. Then $\H = \shad[s](\A)$ has $\k(\H) = \abs{\A}\binom{r}{t} = (1-o_n(1))\frac{\binom{n}{i}}{\binom{r}{i}}\binom{r}{t}$ by \cref{lem:packing}. For every $I \in \binom{V}{i}$, we have \[d_{\H}(I) = \begin{cases}\binom{r-i}{s-i} & \text{if }I \in \shad[i](\A)\\0&\text{otherwise,}\end{cases}\]
	so $\D_i(\H) \le \binom{r-i}{s-i}$. Together with \cref{thm:Q2gen} this implies that
	$N = (1-o_n(1))\frac{\binom{n}{i}}{\binom{r}{i}}\binom{r}{t}$.
\end{proof}

In the proof of \cref{thm:o1vx}, $\A$ covers $(1-o_n(1))\binom{n}{i}$ of the $i$-sets in $V$, i.e. almost all of them, so there exists $\H$ that is almost a Steiner shadow and almost attains the upper bound. In particular it seems highly plausible that a stability version of \cref{thm:achieves} holds.

\begin{rem}\cref{thm:keevash} gives an alternative proof of \cref{thm:o1vx}.
\end{rem}

\subsubsection{Hypergraphs with a fixed number of edges}

For degree bounds of the form $\binom{r-i}{s-i}$, with $r$ an integer, we show that packing shadows achieve the upper bound in \cref{thm:edge1}, and that for $i \le s-2$, they are the only $s$-graphs that achieve this bound. Again, we do not know whether only packing shadows achieve the bound when $i = s-1$. 

\begin{thm}\label{thm:edge2}
	Let $1 \le i < s \le t \le r$, where $r$ is an integer, and suppose that $\H$ is an $s$-graph having $m$ edges.
	\begin{enumerate}[a)]
		\item If $\H$ is a packing shadow $\shad[s]P(i,r)$, then $\D_i(\H) = \binom{r-i}{s-i}$ and $\k[t](\H) = m\frac{\binom{r}t}{\binom{r}s}$. I.e.,  $\H$ achieves the upper bound in \cref{thm:edge1}. In particular, if $\binom{r}{s} \divides m$, then $\H = \frac{m}{\binom{r}{s}}K^{(s)}_r$ achieves equality.
		\item If we further assume that $s \ne i+1$, then $\H$ satisfies both $\D_i(\H) \le \binom{r-i}{s-i}$ and $\k[t](\H) = m\frac{\binom{r}t}{\binom{r}s}$ if and only if $\H$ is a packing shadow $\shad[s]P(i,r)$.
	\end{enumerate}
\end{thm}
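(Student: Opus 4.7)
The plan is to handle part (a) by direct computation from \cref{lem:packing}, and to derive part (b) from \cref{thm:edge1} combined with \cref{cor:extremalpacking}.

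For part (a), suppose $\H = \shad[s](\A)$ where $\A$ is a $P(i,r)$. By \cref{lem:packing}, $\H$ has $m = \abs{\H} = \binom{r}{s}\abs{\A}$ edges, $\D_i(\H) = \binom{r-i}{s-i}$, and $\k[t](\H) = \binom{r}{t}\abs{\A}$. Solving the edge-count identity for $\abs{\A}$ and substituting yields $\k[t](\H) = m \binom{r}{t}/\binom{r}{s}$, matching the bound of \cref{thm:edge1}. For the final sentence of (a), when $\binom{r}{s}$ divides $m$ one may take $\A$ to be a disjoint union of $m/\binom{r}{s}$ complete $r$-sets (trivially a $P(i,r)$), whose $s$-shadow is exactly $\frac{m}{\binom{r}{s}}K^{(s)}_r$.

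For part (b), the ``if'' direction is immediate from (a). For ``only if'', assume $s \neq i+1$ (so $s \ge i+2$), $\Delta_i(\H) \le \binom{r-i}{s-i}$, and $\k[t](\H) = m\binom{r}{t}/\binom{r}{s}$. Apply \cref{thm:edge1} with $x = r$ (an integer with $r \ge s$, so the hypothesis $x \ge s$ is satisfied); the bound reads $\k[t](\H) \le m\binom{r}{t}/\binom{r}{s}$, and our assumption puts $\H$ exactly in the equality case. The equality clause of \cref{thm:edge1} then produces $\k[t-i](\H(I)) = \binom{r-i}{t-i}$ for every $I \in \shad[i](\H)$. All hypotheses of \cref{cor:extremalpacking} are now in force ($i \ge 1$; $i+2 \le s \le t \le r$; $\Delta_i(\H) \le \binom{r-i}{s-i}$; and the clique-count equality on every neighborhood), so we conclude that $\H$ is a packing shadow $\shad[s]P(i,r)$.

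There is no real obstacle in executing this plan: the technical work has already been done in \cref{thm:edge1}, which identifies the equality condition as a purely local statement on neighborhoods, and in \cref{cor:extremalpacking}, which lifts that local condition to the global packing-shadow structure. The hypothesis $s \ne i+1$ enters only to invoke \cref{cor:extremalpacking}, and it is genuinely needed: when $s = i+1$ each neighborhood $\H(I)$ degenerates to a set of vertices ($1$-graph), carrying too little structure to force $\H$ to have packing-shadow form.
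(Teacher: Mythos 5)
Your proposal is correct and follows essentially the same route as the paper: part (a) via \cref{lem:packing} plus the disjoint-$K^{(r)}_r$ construction for the divisibility case, and part (b) by invoking the equality clause of \cref{thm:edge1} (with $x=r$) and then \cref{cor:extremalpacking}, with the hypothesis $s\ne i+1$ used exactly where the paper uses it.
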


\begin{proof}
	First, suppose $\H = \shad[s](\A)$, where $\A$ is a $P(i,r)$. By Lemma \ref{lem:packing}, $\D_i(\H) = \binom{r-i}{s-i}$, and we have $m = \abs{\A}\binom{r}{s}$ and $\k(\H) = \abs{\A}\binom{r}{t}$, so $\k(H) = m\frac{\binom{r}{t}}{\binom{r}{s}}$.
	
	If $\binom{r}{s} \divides m$, then $\frac{m}{\binom{r}{s}}K^{(r)}_r$ is a $P(i,r)$, and its $s$-shadow is $\frac{m}{\binom{r}{s}}K^{(s)}_r$. Note that \[\k\Big(\frac{m}{\binom{r}{s}}K^{(s)}_r\Big) = \frac{m}{\binom{r}{s}}\k\big(K^{(s)}_r\big) =\frac{m}{\binom{r}{s}}\binom{r}{t}\] and $\D_i\big(\frac{m}{\binom{r}{s}}K^{(s)}_r\big) = \binom{r-i}{s-i}$.
	
	Now, suppose $s \ne i+1$ (so $3 \le i+2 \le s$) and $\H$ is an $s$-graph having $m$ edges with $\D_i(\H) \le \binom{r-i}{s-i}$ and $\k[t](\H) = m\frac{\binom{r}{t}}{\binom{r}{s}}$. We have equality in the statement of \cref{thm:edge1}. The last sentence of \cref{thm:edge1} shows that $\k[t-i](\H(I)) = \binom{r-i}{t-i}$ for every $I \in \shad[i](\H)$. By \cref{cor:extremalpacking}, $\H$ is a packing shadow $\shad[s]P(i,r)$.
\end{proof}

The bound given by Theorem \ref{thm:edge1} is asymptotically tight.

\begin{thm}\label{thm:o1edge}
	For $1\le i < s \le t \le r$ and $m \ge 1$, let $M$ be the maximum value of $\k(\H)$ over all $s$-graphs $\H$ having $m$ edges with $\D_i(\H) \le \binom{r-i}{s-i}$. Then \[M = (1-o_m(1))m\frac{\binom{r}t}{\binom{r}s}.\]
\end{thm}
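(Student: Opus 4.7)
The plan is to sandwich $M$ between a matching upper and lower bound, where both bounds are already essentially available from prior results in the paper.

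For the upper bound, I would invoke \cref{thm:edge1} with $x=r$: since $\binom{r-i}{s-i} = \binom{x-i}{s-i}$ with $x=r\ge s$, every $s$-graph $\H$ with $m$ edges and $\Delta_i(\H)\le\binom{r-i}{s-i}$ satisfies
\[
    \k(\H) \le m\,\frac{\binom{r}{t}}{\binom{r}{s}},
\]
so in particular $M \le m\,\binom{r}{t}/\binom{r}{s}$. No stability is needed here.

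For the lower bound, I would produce a concrete construction. Let $a = \lfloor m/\binom{r}{s}\rfloor$ and take $\H_0 = aK^{(s)}_r$, the disjoint union of $a$ copies of $K^{(s)}_r$ on fresh vertex sets. By \cref{thm:edge2}\,(a) applied to this packing shadow (or by direct inspection), $\H_0$ has exactly $a\binom{r}{s}$ edges, $\Delta_i(\H_0) = \binom{r-i}{s-i}$, and $\k(\H_0) = a\binom{r}{t}$. To reach exactly $m$ edges (if desired) we may add at most $\binom{r}{s}-1$ further edges inside one extra disjoint copy of $[r]$, which cannot raise $\Delta_i$ above $\binom{r-i}{s-i}$ and only increases the clique count. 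Thus
\[
    M \ge a\binom{r}{t} \ge \left(\frac{m}{\binom{r}{s}} - 1\right)\binom{r}{t} = m\,\frac{\binom{r}{t}}{\binom{r}{s}} - \binom{r}{t}.
\]

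Combining the two inequalities,
\[
    m\,\frac{\binom{r}{t}}{\binom{r}{s}} - \binom{r}{t} \ \le\ M \ \le\ m\,\frac{\binom{r}{t}}{\binom{r}{s}},
\]
so $M = \bigl(1 - O(1/m)\bigr)\, m\,\binom{r}{t}/\binom{r}{s} = (1-o_m(1))\, m\,\binom{r}{t}/\binom{r}{s}$, since $\binom{r}{t}/\binom{r}{s}$ depends only on $r,s,t$ and is therefore constant as $m\to\infty$. There is no real obstacle here: the disjoint-cliques construction suffices because, unlike the vertex-bounded setting (\cref{thm:o1vx}) where R\"odl's nibble was needed to cover almost all $i$-sets of a fixed vertex set, here we are allowed to introduce new vertices freely, and the simple packing $aK^{(s)}_r$ already realizes the per-edge ratio $\binom{r}{t}/\binom{r}{s}$ exactly.
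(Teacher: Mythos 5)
Your proof is correct and takes essentially the same route as the paper: upper bound from \cref{thm:edge1} with $x=r$, lower bound from the disjoint-cliques construction $aK^{(s)}_r$ with $a=\lfloor m/\binom{r}{s}\rfloor$, and the observation that the deficit is $O(1)$ in $m$. You are in fact a bit more careful than the paper's own proof: the paper simply asserts $M\ge \k(aK^{(s)}_r)$, but $aK^{(s)}_r$ has only $a\binom{r}{s}\le m$ edges, so if $M$ is defined as a maximum over $s$-graphs with exactly $m$ edges one really should pad with $b<\binom{r}{s}$ extra edges inside a fresh copy of $[r]$ (as you do) and check this keeps the $i$-degree bound; your step filling that small gap is a genuine improvement.
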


\begin{proof}
	Given $i,s,t,r,m$, let $m = a\binom{r}{s}+b$, for $0 \le b < \binom{r}{s}$. Then $M \ge \k(aK^{(s)}_t) = a\binom{r}{t} =  (1-\frac{b}{m})m\frac{\binom{r}{t}}{\binom{r}{s}}$. Since $0 \le b < \binom{r}{s}$, $\lim_{m\to\infty}\frac{b}{m} = 0$, so $M \ge (1-o_m(1))m\frac{\binom{r}{t}}{\binom{r}{s}}$. Theorem \ref{thm:edge1} implies $M \le m\frac{\binom{r}{t}}{\binom{r}{s}}$, completing the proof.
\end{proof}

\subsubsection{Hypergraphs with a fixed number of cliques}

When the degree bound is of the form $\binom{r-i}{s-i}$, with $r$ an integer, we show that the upper bound given by \cref{thm:uedge1} is achieved by any $s$-graph $\H$ for which the edges that contribute to the $u$-clique count of $\H$ form a packing shadow. By excluding the case $s=u$, which is addressed in \cref{thm:edge2}, we find that these are the only $s$-graphs that achieve this bound. The case $s=i+1$ is included here. In particular, when $s=i+1$, all degree bounds $\D \ge t-i$ are of the form $\binom{r-i}{s-i}$ for some $r \ge t$, so are covered by \cref{thm:uedge2}.

\begin{thm}\label{thm:uedge2}
Let $1 \le i < s < u \le t \le r$, where $r$ is an integer, and suppose that $\H$ is an $s$-graph with $\D_i(\H) \le \binom{r-i}{s-i}$. Let $p = \k[u](\H)$. Then $\k(\H) = p\frac{\binom{r}{t}}{\binom{r}{u}}$ if and only if the set of edges of $\H$ that are contained in a $u$-clique of $\H$ is a packing shadow $\shad[s]P(i,r)$. In particular, if $\binom{r}{u} \divides p$, then $\H = \frac{p}{\binom{r}{u}}K^{(s)}_r$ achieves equality.
\end{thm}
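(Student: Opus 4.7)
The plan is to reduce to \cref{thm:edge2} via the ``clique hypergraph'' trick of \cref{thm:uedge1}. Let $\U = \K[u](\H)$, a $u$-graph on $p$ edges; by \cref{lem:degreebound} we have $\D_i(\U) \le \binom{r-i}{u-i}$, and since $i < s < u$ we have $u \ge i+2$, so the uniqueness clause of \cref{thm:edge2} applies to $\U$ with $u$ playing the role of $s$.

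First I would establish that $\k[t](\H) = \k[t](\U)$. The inclusion $\K[t](\H) \subseteq \K[t](\U)$ is immediate: every $u$-subset of a $t$-clique of $\H$ is itself a $u$-clique, hence an edge of $\U$. For the reverse, if $\binom{T}{u} \subseteq \U$ then each $s$-subset $S \subseteq T$ extends (using $s < u \le t$) to some $u$-subset $U' \subseteq T$ with $U' \in \U$, so $S \in \binom{U'}{s} \subseteq \H$; thus $T$ is a $t$-clique of $\H$. Combining this with the upper bound from \cref{thm:uedge1}, the desired equality $\k[t](\H) = p\binom{r}{t}/\binom{r}{u}$ holds iff $\k[t](\U) = p\binom{r}{t}/\binom{r}{u}$, which by \cref{thm:edge2}(b) (valid since $u \ne i+1$) is equivalent to $\U = \shad[u](\A)$ for some $P(i,r)$ $\A$.

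Next I would translate this condition on $\U$ into the stated condition about the set of edges of $\H$ contained in a $u$-clique, which is exactly $\shad[s](\U)$. The forward direction is the shadow-composition identity: if $\U = \shad[u](\A)$ then $\shad[s](\U) = \shad[s](\shad[u](\A)) = \shad[s](\A)$, a packing shadow. For the converse, suppose $\shad[s](\U) = \shad[s](\A)$ for a $P(i,r)$ $\A$. For each $A \in \A$ and each $B \in \binom{A}{u}$, every $s$-subset of $B$ lies in $\binom{A}{s} \subseteq \shad[s](\A) = \shad[s](\U) \subseteq \H$, so $B$ is a $u$-clique of $\H$; hence $\shad[u](\A) \subseteq \U$. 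For the reverse inclusion, take $B \in \U$, pick any $s$-subset $S_0 \subseteq B$, and let $A_0$ be the unique element of $\A$ containing $S_0$. For any $v \in B \setminus S_0$ and any $w \in S_0$, the $s$-set $S' = (S_0 \setminus \{w\}) \cup \{v\} \subseteq B$ lies in $\shad[s](\A)$, so $S' \subseteq A'$ for some $A' \in \A$; since $\abs{A_0 \cap A'} \ge \abs{S_0 \cap S'} = s-1 \ge i$, the packing property forces $A_0 = A'$, so $v \in A_0$. Hence $B \subseteq A_0$ and $B \in \shad[u](\A)$, giving $\U = \shad[u](\A)$ as required.

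Finally, for the ``in particular'' statement, if $\binom{r}{u} \divides p$ then $\H = \frac{p}{\binom{r}{u}} K^{(s)}_r$ is the $s$-shadow of $\frac{p}{\binom{r}{u}}$ disjoint $r$-sets (a $P(i,r)$ for every $i$), satisfies $\k[u](\H) = p$ and $\D_i(\H) = \binom{r-i}{s-i}$, and its entire edge set is the required packing shadow. The main technical point is the bidirectional passage between packing shadows at levels $u$ and $s$; the reverse direction in particular uses the packing condition together with $s - 1 \ge i$ to propagate containment from a single $s$-subset up to all of $B$.
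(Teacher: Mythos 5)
Your proof is correct, and its skeleton matches the paper's: pass to the $u$-graph $\U=\K[u](\H)$, get $\D_i(\U)\le\binom{r-i}{u-i}$ from \cref{lem:degreebound}, use the edge-count uniqueness machinery to conclude $\U$ is a packing shadow $\shad[u]P(i,r)$, then descend to level $s$. The packaging differs in two ways. First, you funnel \emph{both} implications through \cref{thm:edge2}(b) applied to $\U$, which requires the two supplementary facts you supply: the exact identity $\k[t](\H)=\k[t](\U)$ (the paper only ever uses the inequality $\k[t](\H)\le\k[t](\U)$, inside \cref{thm:uedge1}) and, more substantively, the ``lifting'' claim that if $\shad[s](\U)$ is a packing shadow then $\U$ itself is one; your argument for the latter (swap one vertex of $S_0$ at a time and use $\abs{A_0\cap A'}\ge s-1\ge i$ against the packing property) is correct and does not appear in the paper, which never needs this direction because it proves the ``if'' implication by direct counting, applying \cref{lem:packing} to $\cE=\shad[s](\A)$, where $\cE$ is the set of edges of $\H$ lying in a $u$-clique. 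Second, for the ``only if'' direction the paper invokes the equality clause of \cref{thm:uedge1} together with \cref{cor:extremalpacking}, which is exactly the content of \cref{thm:edge2}(b) specialized to the $u$-graph $\U$, so there you are doing the same thing in different words. A small bonus of your organization: the identity $\shad[s](\shad[u](\A))=\shad[s](\A)$ combined with the observation $\cE=\shad[s](\U)$ makes explicit the inclusion $\shad[s](\A)\of\cE$, whereas the paper's converse direction only states $\cE\of\shad[s](\A)$ and leaves the reverse containment implicit. The ``in particular'' clause is handled identically in both.
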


\begin{proof} 
	First, let $\cE = \setof{E \in \H}{E \subset U \text{ for some } U \in \K[u](\H)}$, and suppose $\cE = \shad[s](\A)$, where $\A$ is a $P(i,r)$. Note $\k[u](\cE) = \k[u](\H)$. Any edges in $\H \setminus \cE$ are not contained in $u$-cliques of $\H$ so cannot be contained in $t$-cliques of $\H$. Therefore $\k(\cE) = \k(\H)$. By Lemma \ref{lem:packing}, we have $p = \k[u](\H) = \abs{\A}\binom{r}{u}$, and $\k[t](\H) = \abs{\A}\binom{r}{t}$, so $\k[t](\H) = p\frac{\binom{r}{t}}{\binom{r}{u}}$.
	
		If $\binom{r}{u} \divides p$, then $\frac{p}{\binom{r}{u}}K^{(r)}_r$ is a $P(i,r)$, and its $s$-shadow is $\frac{p}{\binom{r}{u}}K^{(s)}_r$. Note that \[\k\Big(\frac{p}{\binom{r}{u}}K^{(s)}_r\Big) = \frac{p}{\binom{r}{u}}\k\big(K^{(s)}_r\big) =\frac{p}{\binom{r}{u}}\binom{r}{t}\] and $\D_i\big(\frac{p}{\binom{r}{u}}K^{(s)}_r\big) = \binom{r-i}{s-i}$.
	
	Now, suppose $\k(\H) = p\frac{\binom{r}{t}}{\binom{r}{u}}$. By \cref{lem:degreebound}, the $u$-graph $\U:=\K[u](\H)$ satisfies $\D_i(\U) \le \binom{r-i}{u-i}$. The last sentence of \cref{thm:uedge1} states that for each $I \in \shad[i](\U)$ we have $\k[t-i](\U) = \binom{r-i}{t-i}$. By \cref{cor:extremalpacking}, $\U$ is a packing shadow $\shad[u]P(i,r)$. Let $\A$ be a $P(i,r)$ such that $\U = \shad[u](\A)$. Since $\K[u](\H) = \U$, every edge $S$ of $\H$ that is contained in a $u$-clique $U$ of $\H$ is in $\shad[s](\A)$, because there is some $r$-set $R \in \A$ such that $S \of U \of R$.
\end{proof}

Theorem \ref{thm:uedge1} is asymptotically tight, by a proof very similar to that of Theorem \ref{thm:o1edge}.

\begin{thm}\label{thm:o1clique}
	For $1\le i < s \le u \le t \le r$ and $p \ge 1$, let $P$ be the maximum value of $\k(\H)$ over all $s$-graphs $\H$ having $\k[u](H) = p$ with $\D_i(\H) \le \binom{r-i}{s-i}$. Then \[P = (1-o_p(1))p\frac{\binom{r}t}{\binom{r}u}.\]
\end{thm}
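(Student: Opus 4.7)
The upper bound $P \le p\binom{r}{t}/\binom{r}{u}$ is immediate from \cref{thm:uedge1} (with $x = r$), so the entire task is to exhibit, for each admissible $p$, an $s$-graph $\H$ meeting the hypotheses with $\k(\H) \ge (1-o_p(1))\,p\,\binom{r}{t}/\binom{r}{u}$.

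My plan is to mimic the construction in the proof of \cref{thm:o1edge}, using a disjoint union of complete $s$-graphs of two sizes so that the $u$-clique count lands exactly on $p$. Write $p = a\binom{r}{u} + b$ with $0 \le b < \binom{r}{u}$, and set
\[
	\H \defeq aK^{(s)}_r \sqcup b K^{(s)}_u,
\]
i.e., $a$ disjoint copies of $K^{(s)}_r$ together with $b$ disjoint copies of $K^{(s)}_u$, all on pairwise disjoint vertex sets. The degree bound is preserved because $\D_i(K^{(s)}_r) = \binom{r-i}{s-i}$ and $\D_i(K^{(s)}_u) = \binom{u-i}{s-i} \le \binom{r-i}{s-i}$, and disjointness prevents $i$-degrees from adding across components. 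Moreover,
\[
	\k[u](\H) = a\binom{r}{u} + b\binom{u}{u} = a\binom{r}{u} + b = p,
\]
as required.

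For the $t$-clique count, each $K^{(s)}_u$ contributes $\binom{u}{t}$ copies of $K^{(s)}_t$, which is $1$ if $t=u$ and $0$ if $t>u$. In the case $t = u$ we already have $\k(\H) = p = p\binom{r}{t}/\binom{r}{u}$ exactly, so the conclusion is trivial. In the case $t > u$,
\[
	\k(\H) \ge a\binom{r}{t} = \frac{p-b}{\binom{r}{u}}\binom{r}{t} = \Bigl(1 - \tfrac{b}{p}\Bigr)\, p\,\frac{\binom{r}{t}}{\binom{r}{u}}.
\]
Since $0 \le b < \binom{r}{u}$ is bounded independently of $p$, $b/p \to 0$ as $p \to \infty$, which yields $\k(\H) = (1 - o_p(1))\,p\,\binom{r}{t}/\binom{r}{u}$.

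There is essentially no obstacle here: the only thing to watch is the bookkeeping in the decomposition $p = a\binom{r}{u} + b$ and the verification that the residual $b$ copies of $K^{(s)}_u$ do not violate the $i$-degree constraint and contribute exactly $b$ to $\k[u]$. Combining this lower bound with the matching upper bound from \cref{thm:uedge1} finishes the proof.
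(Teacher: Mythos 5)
Your proof is correct and follows the approach the paper intends: the paper itself does not spell out a proof for \cref{thm:o1clique}, instead remarking only that it goes ``by a proof very similar to that of \cref{thm:o1edge},'' namely taking $a$ disjoint copies of $K^{(s)}_r$ (where $p = a\binom{r}{u} + b$) to supply the main term $a\binom{r}{t}$, combining with the upper bound from \cref{thm:uedge1}, and letting $b/p \to 0$. Your addition of $b$ disjoint copies of $K^{(s)}_u$ to hit $\k[u](\H)=p$ exactly, together with the explicit check that these components neither raise $\Delta_i$ above $\binom{r-i}{s-i}$ nor contribute spurious $t$-cliques when $t>u$, is exactly the bookkeeping the paper leaves implicit (and it also handles $t=u$ cleanly, where equality is exact). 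Nothing in your argument is extraneous or deviates from the intended route.
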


\subsubsection{A theorem on $2$-graphs}

We also obtain the following corollary giving the maximum number of $t$-cliques among $2$-graphs with a fixed number of $u$-cliques and an arbitrary constant upper bound on the maximum degree.

\begin{thm}\label{cor:graph}
		Suppose $3 \le u \le t \le r$ and $G$ is a graph such that $\k[u](G) = p$ and $\D(G) \le r-1$. Then
		\begin{enumerate}[a)]
			\item $\k(G) \le p\binom{r}{t}/\binom{r}{u}$.
			\item The maximum value of $\k(G)$ over all such graphs is $(1-o_p(1))p\binom{r}t/\binom{r}u$.
			\item We have $\k(G) = p\binom{r}{t}/\binom{r}{u}$ if and only if $G$ (after removing any edge not contained in a $u$-clique) is a $(p/\binom{r}{u})K_r$ (possibly together with some isolated vertices). In particular, we have equality if and only if $\binom{r}{u} \divides p$.
		\end{enumerate}   
\end{thm}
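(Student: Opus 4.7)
The plan is to specialize Theorems \ref{thm:uedge1}, \ref{thm:uedge2}, and \ref{thm:o1clique} to the case $s=2$, $i=1$ and then interpret the resulting packing shadow combinatorially. With these parameters the degree bound $\D_i(\H)\le\binom{r-i}{s-i}$ becomes $\D(G)\le\binom{r-1}{1} = r-1$, exactly matching the hypothesis, and the constraint $s<u$ from Theorem \ref{thm:uedge2} is ensured by $u\ge 3$.

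For (a), I view $G$ as a $2$-graph and apply Theorem \ref{thm:uedge1} directly to obtain $\k(G)\le p\binom{r}{t}/\binom{r}{u}$. Part (b) follows in the same way from Theorem \ref{thm:o1clique}, which gives an asymptotically matching lower bound $(1-o_p(1))p\binom{r}{t}/\binom{r}{u}$ via an explicit extremal construction built from disjoint copies of $K_r$ plus a leftover piece.

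The substantive step is (c). By the equality characterization in Theorem \ref{thm:uedge2}, $\k(G) = p\binom{r}{t}/\binom{r}{u}$ if and only if the subgraph $G'$ consisting of the edges of $G$ that lie in some $u$-clique of $G$ is a packing shadow $\shad[2]P(1,r)$. The key observation, immediate from the definition, is that a $P(1,r)$ is a collection of $r$-sets covering each single vertex at most once, i.e., a family of pairwise disjoint $r$-sets; its $2$-shadow is therefore a vertex-disjoint union $kK_r$. Since every $u$-clique of $G$ has all of its edges in $G'$, counting $u$-cliques gives $p = \k[u](G') = k\binom{r}{u}$, which forces $k=p/\binom{r}{u}$ and in particular $\binom{r}{u}\divides p$. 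The vertices of $G$ not covered by these $k$ copies of $K_r$ are exactly the isolated vertices mentioned in the statement. Conversely, if $G$ minus the edges not in any $u$-clique equals $(p/\binom{r}{u})K_r$ plus isolated vertices, then every $t$-clique of $G$ must lie in this $kK_r$ subgraph (each of its edges belongs to some $u$-subset, which is a $u$-clique, so lies in $G'$), giving $\k(G) = k\binom{r}{t} = (p/\binom{r}{u})\binom{r}{t}$. The ``in particular'' clause is then immediate: $\binom{r}{u}\divides p$ is necessary for the structural description to make sense and is sufficient via the construction $G=(p/\binom{r}{u})K_r$.

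I expect no real obstacle: the whole argument is essentially a translation from the abstract packing-shadow language of Theorem \ref{thm:uedge2} into the concrete graph-theoretic picture for the trivial packing parameter $i=1$, where a $P(1,r)$ is nothing more than a matching of $r$-sets. The only mild care needed is the bookkeeping for edges not contained in any $u$-clique, handled by passing between $G$ and the subgraph $G'$.
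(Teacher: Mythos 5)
Your proposal is correct and follows essentially the same route as the paper: it specializes \cref{thm:uedge1}, \cref{thm:o1clique}, and \cref{thm:uedge2} to $s=2$, $i=1$ and uses the observation that a $P(1,r)$ is a family of pairwise disjoint $r$-sets, so its $2$-shadow is a disjoint union of copies of $K_r$. The extra bookkeeping you supply (counting $p = k\binom{r}{u}$ to get the divisibility claim, and the converse via the edges-in-$u$-cliques subgraph) is exactly the translation the paper leaves implicit.
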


\begin{proof}Apply \cref{thm:uedge1}, \cref{thm:uedge2}, and \cref{thm:o1clique} with $s=2$ and $i=1$. Note that a packing $P(1,r)$ is a set of disjoint $r$-sets, so its $2$-shadow forms a set of disjoint $r$-cliques.
\end{proof}

\Cref{cor:graph} is a signpost answer to a question in the concluding remarks of \cite{CC20}.

\section{Open Problems}\label{sec:open}

Many interesting problems still remain. We list some of them here.

\begin{problem}If $\D_i(\H) \le \binom{r-i}{s-i}$, where $r$ is an integer, \cref{thm:achieves}, \cref{thm:edge2}, and \cref{thm:uedge2} completely characterize the $s$-graphs that achieve the upper bounds given by \cref{thm:Q2gen} and \cref{thm:edge1} for $i \le s-2$, and \cref{thm:uedge1} for $u \ne s$. In particular, these upper bounds cannot be achieved for some values of the problem parameters.
	 \begin{enumerate}[a)]
	 	\item For values of $i$, $r$, and $n$ for which Steiner systems $S(i,r,n)$ do not exist (either because they do not satisfy the necessary divisibility conditions or because $n$ is too small---see \cref{thm:keevash}), 
	 	\cref{thm:achieves} shows that all $s$-graphs $\H$ on $n$ vertices having $\D_i(\H) \le \binom{r-i}{s-i}$ have $\k(\H) < \binom{n}{i}\binom{r}{t}/\binom{r}{i}$, although by \cref{thm:o1vx}, $\max \set{\k(\H)} = (1-o_n(1)) \binom{n}{i}\binom{r}{t}/\binom{r}{i}$. Which such $s$-graphs have the maximum number of $t$-cliques?
	 	\item By \cref{lem:packing}, if $\H = \shad[s](\A)$, with $\A$ a $P(i,r)$, then $\abs{\H} = \k[s](\H) = \binom{r}{s}\abs{\A}$. Therefore, by \cref{thm:edge2}, when $m \nmid \binom{r}{s}$, all $s$-graphs $\H$ having $m$ edges and $\D_i(\H) \le \binom{r-i}{s-i}$ have $\k(\H) < m\binom{r}{t}/\binom{r}{s}$, although by \cref{thm:o1edge}, $\max \set{\k(\H)} = (1-o_m(1))m\binom{r}{t}/\binom{r}{s}$. Which such $s$-graphs have the maximum number of $t$-cliques?
	 	\item Similarly, by \cref{thm:uedge2}, when $p \nmid \binom{r}{u}$, all $s$-graphs having $\k[u](\H) = p$ and $\D_i(\H) \le \binom{r-i}{s-i}$ have $\k(\H) < p\binom{r}{t}/\binom{r}{u}$, although by \cref{thm:o1clique}, $\max \set{\k(\H)} = (1-o_p(1))p\binom{r}{t}/\binom{r}{u}$. Which such $s$-graphs have the maximum number of $t$-cliques?
	 \end{enumerate}
 \end{problem}

\begin{problem} Among $s$-graphs with $\D_{s-1}(\H) \le r-s+1$ (the $s=i+1$ case) we have determined the exact maximum number of $t$-cliques and found extremal $s$-graphs.
	\begin{enumerate}[a)]
		\item Are there $s$-graphs $\H$ on $n$ vertices with $\D_{s-1}(\H) \le r-s+1$ that have $\k(\H) = \frac{\binom{n}{s-1}}{\binom{r}{s-1}} \binom{r}{t}$ but are not Steiner shadows $\shad[s]S(s-1,r,n)$?
		\item Are there $s$-graphs $\H$ on $m$ edges with $\D_{s-1}(\H) \le r-s+1$ that have $\k(\H) = m\frac{\binom{r}t}{\binom{r}s}$ but are not packing shadows $\shad[s]P(s-1,r)$?
	\end{enumerate}
\end{problem}

\begin{problem}  We have characterized the extremal $s$-graphs and proved that our upper bounds are asymptotically tight only when the $i$-degree bound is $\binom{r-i}{s-i}$ for some integer $r$. Are the upper bounds given by \cref{cor:explicit}, \cref{thm:edge1}, and \cref{thm:uedge1} tight when the $i$-degree bound does not have this form?
\end{problem} 
	
\begin{problem} For which values of $m$ does $\frac{\k[t]_{s}(m)}{m} = \max_{m'\le m}\frac{\k[t]_{s}(m')}{m'}$? (See \cref{rem:Jung}.)	\end{problem}

\begin{appendices}
    \section{Kruskal-Katona Details}
    \label{sec:kkd}

    In this appendix we give proof details for some of the results in  \Cref{sec:KK}. We prove a slightly expanded version of \Cref{thm:kkt}; one that discusses upshadows as well as (down) shadows and cliques. To state this result we define another total order on finite subsets.

    \begin{defn}\label{def:retlex}
        The \emph{retrolexicographic} (or \emph{retlex}) order on finite subsets of $\N$ is defined by $A \retlex B$ iff $\max(A\symd B)\in A$. We write $\R_s(n,m)$ for the $\retlex$-initial segment of size $m$ in $\nlayer{s}$.
        
        In addition, given a ground set $[n]$ and an $s$-graph $\A$ on $[n]$, we define 
        \[
            \Abar = \setof{[n]\wo A}{A \in \A},
        \]
        an $(n-s)$-graph on $[n]$ with the same size as $\A$.
    \end{defn}

    \begin{rem}\label{rem:retlex}  
        The definition has the following symmetries with the colex order.
        \begin{enumerate}[a),thmparts]
            \item\label{rem:retlex1} We have $A \retlex B$ if and only if $A > B$, i.e., retlex is the reverse of colex order. 
            \item\label{rem:retlex2} If both $A$ and $B$ are subsets of $[n]$ then, since $A\symd B = ([n]\wo A) \symd ([n]\wo B)$, we have $[n]\wo A \retlex [n]\wo B$ if and only if $A < B$. 
        \end{enumerate}
    In particular for $0 \le m \le \binom{n}{s}$ we have 
    \[
        \R_s\parens[\Big]{n,\binom{n}s-m} = \nlayer{s} \Bigwo \C(m) \qquad\text{and}\qquad \overline{\R_s(n,m)} = \C[n-s](m). 
    \]
    \end{rem}
     
    Note that colex initial segments are independent of $n$ (provided $m\le \binom{n}s$), whereas retlex initial segments depend in an essential way on $n$. Since there are many nice presentations of the bound on shadows (see for instance \cite{FT18}) we will only prove the clique and upshadow bounds.

    \toggletrue{kkt_details}

    \kruskalkatona*

    \begin{proof}
        We may assume without loss of generality that $V=[n]$. We'll start by proving the upshadow bound from the shadow bound. Given $\cE\of \nlayer{s}$ and writing $\cEbar = \setof{[n]\wo E}{E\in \cE} \of \nlayer{n-s}$, we have
    \begin{align*}
            \shad[n-t](\cEbar) &= \setof[\big]{[n]\wo T}{\abs{T} = t \text{ and } \exists\;([n]\wo E) \in \cEbar \st ([n]\wo T)\of([n]\wo E)}\\
            &= \setof[\Big]{[n]\wo T}{T\in \nlayer{t} \text{ and } \exists\, E \in \cE \st E \of T} =  \overline{\upshad(\cE)}.
    \end{align*}
        Thus, by the shadow bound, to minimize $\abs{\upshad(\cE)} = \abs{\overline{\upshad(\cE)}}$ we can take $\cEbar$ to be a colex initial segment, i.e., by \cref{rem:retlex} \ref{rem:retlex1}, $\cE$ to be a retlex initial segment. Now, for the clique bound, note that 
        \[
            \K(\A) = \nlayer{t} \Bigwo \upshad\parens[\Big]{\nlayer{s} \bigwo \A}.
        \]
        Thus to maximize $\abs{\K(\A)}$ we can take $\nlayer{s} \wo \A$ to be a retlex initial segment, i.e., by \cref{rem:retlex} \ref{rem:retlex2}, take $\A$ to be a colex initial segment.  
    \end{proof}

    Using \cref{rem:retlex} we can immediately read out of the proof of the previous theorem the functions $\k_s$ and $\shad[n-t]^{n-s}$.

    \remthirtynine*

    \begin{proof}
        We have 
        \begin{align*}
            \K(\C(m)) &= \nlayer{t} \Bigwo \upshad\parens[\Big] {\nlayer{s} \bigwo \C(m)} \\
                      &= \nlayer{t} \Bigwo \upshad\parens[\Big] {\R_s\parens[\big]{n,{\textstyle \binom{n}s}-m}} \\
                      &= \nlayer{t} \Bigwo \overline{\shad[n-t]\parens[\Big] { \overline{\R_s\parens[\big]{n,{\textstyle \binom{n}s}-m} }}} \\
                      &= \nlayer{t} \Bigwo \overline{\shad[n-t]\parens[\Big] { \C[n-s]\parens[\big]{{\textstyle \binom{n}s} - m }}} \\
            \shortintertext{i.e.,}
            \k_s(m) = \k(\C(m)) &= \binom{n}t - \shad[n-t]^{n-s} \parens[\big]{{\textstyle \binom{n}s} -m }. \qedhere
    \end{align*}
    \end{proof}
\end{appendices}

\bibliographystyle{amsplain}
\bibliography{Bibliography}

\providecommand{\bysame}{\leavevmode\hbox to3em{\hrulefill}\thinspace}
\providecommand{\MR}{\relax\ifhmode\unskip\space\fi MR }
\providecommand{\MRhref}[2]{%
  \href{http://www.ams.org/mathscinet-getitem?mr=#1}{#2}
}
\providecommand{\href}[2]{#2}
\begin{thebibliography}{10}

\bibitem{AlonS}
Noga Alon and Clara Shikhelman, \emph{{Many $T$ copies in $H$-free graphs}}, J.
  Combin. Theory Ser. B \textbf{121} (2016), 146--172, Fifty years of The
  Journal of Combinatorial Theory.

\bibitem{CC20}
Debsoumya Chakraborti and Da~Qi Chen, \emph{Many cliques with few edges and
  bounded maximum degree}, J. Combin. Theory Ser. B \textbf{151} (2021), 1--20.

\bibitem{ChaoDong2022}
Ting-Wei Chao and Zichao Dong, \emph{A simple proof of the {{Gan-Loh-Sudakov}}
  conjecture}, Electron. J. Combin. \textbf{29} (2022), no.~3, P3.59.

\bibitem{chase20}
Zachary Chase, \emph{The maximum number of triangles in a graph of given
  maximum degree}, Adv. Comb. (2020).

\bibitem{CR14}
J.~Cutler and A.~J. Radcliffe, \emph{The maximum number of complete subgraphs
  in a graph with given maximum degree}, J. Combin. Theory Ser. B \textbf{104}
  (2014), 60--71.

\bibitem{CR2016}
Jonathan Cutler and A.~J. Radcliffe, \emph{The maximum number of complete
  subgraphs of fixed size in a graph with given maximum degree}, J. Graph
  Theory \textbf{84} (2017), no.~2, 134--145.

\bibitem{FT18}
Peter Frankl and Norihide Tokushige, \emph{Extremal problems for finite sets},
  Student Mathematical Library, vol.~86, American Mathematical Society,
  Providence, RI, 2018.

\bibitem{Frohmader}
Andrew Frohmader, \emph{Face vectors of flag complexes}, Israel Journal of
  Mathematics \textbf{164} (2008), no.~1, 153--164.

\bibitem{Frohmader10}
\bysame, \emph{A {{Kruskal}}\textendash{{Katona}} type theorem for graphs}, J.
  Combin. Theory Ser. A \textbf{117} (2010), no.~1, 17--37.

\bibitem{FurediGriggs86}
Z.~F\"{u}redi and J.~R. Griggs, \emph{Families of finite sets with minimum
  shadows}, Combinatorica \textbf{6} (1986), no.~4, 355--363.

\bibitem{FZ2021}
Zolt\'{a}n F\"{u}redi and Yi~Zhao, \emph{Shadows of 3-uniform hypergraphs under
  a minimum degree condition}, SIAM Journal on Discrete Mathematics \textbf{36}
  (2022), no.~4, 2523--2533.

\bibitem{GLS}
Wenying Gan, Po-Shen Loh, and Benny Sudakov, \emph{Maximizing the number of
  independent sets of a fixed size}, Combin. Probab. Comput. \textbf{24}
  (2014), no.~3, 521--527.

\bibitem{Jung21}
Attila {Jung}, \emph{Shadow of hypergraphs with bounded degree}, arXiv e-prints
  (2021), arXiv:2109.08193.

\bibitem{K68}
G.~Katona, \emph{A theorem of finite sets}, Theory of graphs ({P}roc.
  {C}olloq., {T}ihany, 1966), Academic Press, New York, 1968, pp.~187--207.

\bibitem{Keevash2011}
Peter Keevash, \emph{Hypergraph {T}ur\'{a}n problems}, Surveys in Combinatorics
  2011, London Math. Soc. Lecture Note Ser., vol. 392, Cambridge Univ. Press,
  Cambridge, 2011, pp.~83--139.

\bibitem{Keevash}
Peter {Keevash}, \emph{{The existence of designs}}, arXiv e-prints (2014),
  arXiv:1401.3665.

\bibitem{KR19}
Rachel Kirsch and A.~J. Radcliffe, \emph{Many triangles with few edges},
  Electron. J. Combin. \textbf{26} (2019), no.~2, Paper No. 2.36, 23.

\bibitem{KR21}
\bysame, \emph{Many cliques with few edges}, Electron. J. Combin. \textbf{28}
  (2021), no.~1, Paper No. 1.26, 17.

\bibitem{K63}
Joseph~B. Kruskal, \emph{The number of simplices in a complex}, Mathematical
  optimization techniques, Univ. of California Press, Berkeley, Calif., 1963,
  pp.~251--278.

\bibitem{LiuWang2020}
Erica L.~L. Liu and Jian Wang, \emph{The maximum number of cliques in
  hypergraphs without large matchings}, Electron. J. Combin. \textbf{27}
  (2020), no.~4, Paper No. 4.14, 23.

\bibitem{LovBook}
L.~Lov\'{a}sz, \emph{Combinatorial problems and exercises}, North-Holland
  Publishing Co., Amsterdam-New York, 1979.

\bibitem{rodl}
Vojt\v{e}ch R\"{o}dl, \emph{On a packing and covering problem}, European J.
  Combin. \textbf{6} (1985), no.~1, 69--78.

\bibitem{Turan1941}
P.~Tur{\'a}n, \emph{{Eine Extremalaufgabe aus der Graphentheorie}}, Mat. Fiz.
  Lapok \textbf{48} (1941), no.~435-452, 61.

\bibitem{Wood2007}
D.~R. Wood, \emph{On the maximum number of cliques in a graph}, Graphs Combin.
  \textbf{23} (2007), no.~3, 337--352.

\bibitem{Z49}
A.~A. Zykov, \emph{On some properties of linear complexes}, Mat. Sbornik N.S.
  \textbf{24(66)} (1949), 163--188.

\end{thebibliography}
\end{document}